%%%%%%%%%%%%%%%%%%%%%%%%%%%%%%%
%
%
%%%%%%%%%%%%%%%%%%%%%%%%%%%%%%%

%%%%%%%%% Document Styles and Packages %%%%%%
\documentclass{amsart}

\usepackage{amsmath, amssymb}
\usepackage{amscd}
\usepackage{verbatim}
%\usepackage[arrow,matrix]{xy}
%\usepackage[hypertex]{hyperref} % for dvi
%\usepackage{hyperref}           % for pdf
%
%
% THEOREM Environments (Examples)-----------------------------------------
%
\newtheorem{definition}{Definition}[section]
\newtheorem{theorem}[definition]{Theorem}
\newtheorem{lemma}[definition]{Lemma}
\newtheorem{corollary}[definition]{Corollary}
\newtheorem{proposition}[definition]{Proposition}
\theoremstyle{definition}
\newtheorem{remark}[definition]{Remark}

\newtheorem{example}[definition]{Example}

\newtheorem{notation}[definition]{Notation}
\newtheorem{question}[definition]{Question}

%%%%%%%%%%%%%% Start of Macros  %%%%%%%%%%%%%%%%

%%%%%%%%% Font styles for algebras and spaces
%\newcommand\style{\mathsf }
\newcommand\style{\mathcal }          %%% calligraphic

\newcommand\CC{\mathbb{C}}

\newcommand{\R}{\mathbb{R}}

\newcommand{\cl}[1]{\mathcal{#1}}
\newcommand{\bb}[1]{\mathbb{#1}}

%%%% Hilbert spaces, subspaces, and B(H)
                        %%% Hilbert space H
                       %%% Hilbert space K
                       %%%% Subspace L
    %%%% B(H)
     %%%% B(K)

\newcommand{\B}{\style{B}}
\newcommand{\M}{\style{M}}
\renewcommand{\H}{\style{H}}
\newcommand{\K}{\style K}
\newcommand{\E}{\style{E}}

%%%%%% groups
       %%% unitary group

\newcommand\fn{{\mathbb F}_n}         %%% free group
\newcommand\fm{{\mathbb F}_m}         %%% free group
\newcommand\fk{{\mathbb F}_k}         %%% free group

\newcommand{\lgG}{\rm{G}}        %%%% Lie group G
\newcommand{\lgH}{\rm{H}}        %%%% Lie group H
      %%%%%%%%% automorphism group

%%%%%% operator systems

\newcommand\osr{{\style R}}

\newcommand\oss{{\style S}}
\newcommand\ost{{\style T}}

                    %%%% kernel J
  %%%% injective envelope of S

%%%%%% tensor cones

%%%%%% tensor products
\newcommand\omin{\otimes_{\rm min}}
\newcommand\omax{\otimes_{\rm max}}
\newcommand\oc{\otimes_{\rm c}}
\newcommand\oess{\otimes_{\rm ess}}

\newcommand\coisubset{\subseteq_{\rm coi}}   %%%%% coi inclusion

%%%%% C$^*$-algebras

\newcommand\jay{{\style J}}                                  %%%%% ideal J
                                %%%%% ideal K
\newcommand\cstar{{\rm C}^*}                              %%% C$^*$-algebra generated by
\newcommand\cstare{{\rm C}_{\rm e}^*}              %%% C$^*$-envelope of
\newcommand\cstaru{{\rm C}_{\rm u}^*}              %%% universal C$^*$-algebra of
\newcommand\wstar{{\rm W}^*}                              %%% W$^*$-algebra generated by

   %%% margin comment

%\newcommand{\bb}[1]{\mathbb{#1}}

%%%%%%%%%%% End of Macros %%%%%%%%%%%%%%%%

%%%%%% The Document Begins Here%%%%%%%%%%%%%%%%%%%%%%

%%%%%%%%%%%%%%%%%%%%%%%%%%%%%%%%%%%%

\begin{document}

\title[Operator Systems from Discrete Groups]{Operator
  Systems from Discrete Groups}

\author[D.~Farenick]{Douglas Farenick}
\address{Department of Mathematics and Statistics, University of Regina,
Regina, Saskatchewan S4S 0A2, Canada}
\email{douglas.farenick@uregina.ca}

\author[A.~S.~Kavruk]{Ali S.~Kavruk}
\address{Department of Mathematics, University of Illinois, Urbana, IL 61801, U.S.A.}
\email{kavruk@illinois.edu}

\author[V.~I.~Paulsen]{Vern I.~Paulsen}
\address{Department of Mathematics, University of Houston,
Houston, Texas 77204-3476, U.S.A.}
\email{vern@math.uh.edu}

\author[I.~G.~Todorov]{Ivan G.~Todorov}
\address{Pure Mathematics Research Centre, Queen's University Belfast, Belfast BT7 1NN, United Kingdom}
\email{i.todorov@qub.ac.uk}

\thanks{This work supported in part by NSERC (Canada), NSF (USA), and EPSRC (United Kingdom)}
\keywords{operator system, tensor product, discrete group, free group, free product, Tsirelson's problem, quantum correlations}
\subjclass[2010]{Primary 46L06, 46L07; Secondary 46L05, 47L25, 47L90}

\begin{abstract}
We formulate a general framework for the study of operator systems arising from discrete groups. We study in detail
the operator system of the free group $\fn$ on $n$ generators, as well as the operator systems of
the free products of finitely many copies of the two-element group $\mathbb Z_2$.
We examine various tensor products of group operator systems, including the minimal, the maximal, and the commuting
tensor products. We introduce a new tensor product in the category of operator systems and
formulate necessary and sufficient conditions for its equality to the commuting tensor product in the case of group operator systems.
We express various sets of quantum correlations studied in the theoretical physics literature
in terms of different tensor products of operator systems of discrete groups.
We thus recover earlier results of Tsirelson and formulate a new
approach for the study of quantum correlation boxes.
\end{abstract}

\maketitle

%%%%%%%%%%%%%
\section*{Introduction}

In this paper we study operator systems arising from discrete groups.
Let $\mathfrak u$ be a set of generators of a discrete group $\lgG$. The
operator system of $\mathfrak u$ is defined to be
the operator subsystem $\oss(\mathfrak u)$ of the (full) group C$^*$-algebra $\cstar(\lgG)$ given by
\begin{equation}\label{osdg}
\oss(\mathfrak u)\,=\,\mbox{\rm span}\{1,u,u^*\,:\,u\in\mathfrak u\}\,\subseteq\,\cstar(\lgG)\,.
\end{equation}
We will show that many ``universal operator systems''
arise in this manner. For example, the universal operator systems of
$n$ unitary operators or of $n$ contractive operators arise from the study of
the operator system $\oss(\mathfrak u)$ obtained by letting $\mathfrak u = \{u_1,\dots,u_n\}$ be the
set of standard generators of the free group $\mathbb F_n$ on $n$ generators (see Section \ref{op th}).

We show that a number of questions studied previously in the literature can be placed in the framework of
tensor products of group operator systems.
For example, in our earlier work \cite{farenick--paulsen2011,farenick--kavruk--paulsen2011,kavruk2011} it has been established that
Connes' Embedding Problem, in the guise of Kirchberg's Problem,
reduces to understanding the ``minimal'' and the ``commuting'' tensor products \cite{kavruk--paulsen--todorov--tomforde2011}
of some group operator systems. Specifically, if we let $\oss_n$ denote the operator
subsystem of $\cstar(\fn)$ obtained as in \eqref{osdg} using the set $\mathfrak u$ of
the canonical generators for the free group $\fn$, then the following statements are equivalent:
\begin{enumerate}
\item[{(i)}] $\oss_2\omin\oss_2=\oss_2\oc\oss_2$;
\item[{(ii)}] $\cstar(\mathbb F_2)\omin\cstar(\mathbb F_2) = \cstar(\mathbb F_2)\omax\cstar(\mathbb F_2)$;
\item[{(iii)}] the Connes Embedding Problem has an affirmative solution.
\end{enumerate}
The equivalence of the last two assertions above is the criterion of Kirchberg \cite{kirchberg1993}; however, statement (i)
involves a vector space of dimension $9$ and is, on the surface, seemingly more tractable than the Kirchberg criterion.

Because of their ``universal'' and ``free'' properties, 
there are certain tensor identities that hold for tensor products of the form $\oss_n \otimes \oss_m$ that are not apparent for other group operator systems. 
For this reason we are forced to introduce a new tensor product, which we denote by ${\rm ess}$.
It complements the list of tensor product introduced in \cite{kavruk--paulsen--todorov--tomforde2011}
and plays an important role in the study of group operator systems. In Section \ref{s_osdg} we determine
necessary and sufficient conditions for the equality of the ess and the commuting tensor products in the case of group operator systems.

The problem of whether $\oss_2\omin\oss_2=\oss_2\oc\oss_2$ is equivalent to the Connes Embedding Problem. What is the relationship between $\oss_2\oc\oss_2$ and $\oss_2\omax\oss_2$?
In Theorem \ref{ocomax} we show that $\oss_n\oc\oss_m\neq\oss_n\omax\oss_m$ for every $n,m\in\mathbb N$ and that the distinction between them occurs at the level of $2\times 2$ matrices
over $\oss_n\oc\oss_m$ and $\oss_n\omax\oss_m$. One interesting consequence: although the C$^*$-envelope $\cstare(\oss_k)$ of $\oss_k$ is 
$\cstar(\mathbb F_k)$, it is not true that $\cstare(\oss_n\omax\oss_m)=\cstar(\fn)\omax\cstar(\fm)$ (Theorem \ref{omaxe}).

The study of $\oss_n$ in the case where $n=1$ is of interest
in operator theory, as the operator system $\oss_1$ and its dual $\oss_1^d$ are universal objects for operators of a certain type, as we shall explain in 
Section \ref{op th}. A complete list of tensor product relations between $\oss_1$ and $\oss_1^d$ is given in Theorem \ref{mixed}
for the min, c, and max tensor products.

There are further reasons for undertaking a detailed study of tensor products of operator systems arising from discrete groups.
As suggested, for example, in \cite{fritz2012},
operator system tensor products offer a useful framework for the
description of quantum correlations. However, in \cite{fritz2012} only
the ordered structure is used fully and the matrix orderings are not
exploited or characterized.
In the present paper, we make explicit the connections between quantum correlations and
tensor products of group operator systems.
In this regard, we show in Section \ref{tsir}  that the
``non-commutative $n$-cube'' studied by Tsirelson \cite{tsirelson1980,tsirelson1993} in connection with quantum generalisations of the Bell inequalities
generates an operator system, which we denote by $NC(n)$, that is also
the operator system arising from the $n$-fold free product $\mathbb Z_2*\cdots*\mathbb Z_2$.
In Theorem \ref{t-thm} we establish an operator system interpretation of Tsirelson's computations in \cite{tsirelson1980,tsirelson1993}: namely,
$NC(n)\oc NC(m)\neq NC(n)\omax NC(m)$ for all $n,m\geq 2$.

Finally, in Section \ref{bcb} we examine bipartite correlations from the perspective of three operator system structures on the
algebraic tensor product $NC(2)^d\otimes NC(2)^d$, where $NC(2)^d$ is given a concrete realisation as an operator subsystem
of $4\times 4$ diagonal matrices.

 %%%%%%%%%%%%%%%%%%%%%%%%%%%%%%%%%%%%%%%%%%%%%%%%%

\section{Tensor Products and Quotients of Operator Systems}

In this section, we introduce basic terminology and notation, and recall previous constructions and results
that will be needed in the sequel.
If $V$ is a vector space, we let $\cl M_{n,m}(V)$ be the space of all $n$ by $m$ matrices with entries in $V$.
We set $\cl M_n(V) = \cl M_{n,n}(V)$ and $\cl M_n = \cl M_n(\mathbb{C})$.
We let $(E_{ij})_{i,j}$ be the canonical matrix unit system in $\cl M_n$.
For a map $\phi : V\to W$ between vector spaces, we let $\phi^{(n)} : \cl M_n(V)\to \cl M_n(W)$
be the $n$th ampliation of $\phi$ given by $\phi^{(n)}((x_{ij})_{i,j}) = (\phi(x_{i,j}))_{i,j}$.
For a Hilbert space $\cl H$, we denote by $\cl B(\cl H)$ the
algebra of all bounded linear operators on $\cl H$.
An \emph{operator system} is a subspace $\cl S$ of
$\cl B(\cl H)$ for some Hilbert space $\cl H$ which contains the identity operator $I$ and is closed
under taking adjoints. The embedding of $\cl M_n(\cl S)$
into $\cl B(\cl H^n)$ gives rise to the cone $\cl M_n(\cl S)_+$ of all positive operators in $\cl M_n(\cl S)$. The
family $(\cl M_n(\cl S)_+)_{n\in \mathbb{N}}$ of cones is called the \emph{operator system structure} of $\cl S$.
Every complex $*$-vector space equipped with a family of matricial cones and an order unit satisfying natural axioms can, by virtue
of the Choi-Effros Theorem \cite{choi--effros1977}, be represented faithfully as an operator system acting on some Hilbert space.
When a particular embedding is not specified, the order unit of an operator system will be denoted by $1$.
A map $\phi : \cl S\to \cl T$ between operator systems is called \emph{completely positive} if $\phi^{(n)}$
positive, that is, $\phi^{(n)}(\cl M_n(\cl S)_+)\subseteq \cl M_n(\cl T)_+$, for every $n\in \mathbb{N}$.
A linear bijection $\phi : \cl S\to \cl T$ of operator systems $\oss$ and $\ost$ is a \emph{complete order isomorphism} if both $\phi$ and $\phi^{-1}$
are completely positive.
We refer the reader to
\cite{Paulsen-book} for further properties of operator systems and completely positive maps.

An \emph{operator system tensor product} $\oss\otimes_\tau\ost$ of operator systems $\oss$ and $\ost$
is an operator system structure on the algebraic tensor product $\oss\otimes\ost$ satisfying a set of natural axioms.
We refer the reader to \cite{kavruk--paulsen--todorov--tomforde2011}, where a detailed study of such tensor products
was undertaken.
Suppose that $\oss_1\subseteq\ost_1$ and $\oss_2\subseteq\ost_2$ are inclusions of operator systems. Let $\iota_j:\oss_j\rightarrow\ost_j$ denote the
inclusion maps $\iota_j(x_j)=x_j$ for $x_j\in\oss_j$, $j = 1,2$, so that the map
$\iota_1\otimes\iota_2:\oss_1\otimes\oss_2\rightarrow\ost_1\otimes\ost_2$ is a linear
inclusion of vector spaces. If $\tau$ and $\sigma$ are operator system structures on $\oss_1\otimes\oss_2$ and $\ost_1\otimes\ost_2$ respectively,
then we use the notation
\[
\oss_1\otimes_\tau\oss_2\,\subseteq^+\,\ost_1\otimes_\sigma\ost_2
\]
to denote that $\iota_1\otimes\iota_2:\oss_1\otimes_\tau\oss_2\rightarrow\ost_1\otimes_\sigma\ost_2$ is a
(unital) completely positive map. This notation is motivated by the fact that $\iota_1 \otimes \iota_2$ is a
completely positive map if and only if, for every $n,$ the cone $\cl M_n(\oss_1\otimes_\tau\oss_2)_+$ is
contained in the cone $\cl M_n(\ost_1\otimes_\sigma \ost_2)_+$.
If, in addition, $\iota_1\otimes\iota_2$ is a complete order isomorphism onto its range, then we write
\[
\oss_1\otimes_\tau\oss_2\coisubset\ost_1\otimes_\sigma\ost_2\,.
\]
In particular, if $\tau$ and $\sigma$ are two operator system structures on $\oss\otimes\ost$, then
\[
\oss\otimes_\tau\ost\,=\,\oss\otimes_\sigma\ost
\quad\mbox{means}\quad
\oss\otimes_\tau\ost\coisubset\oss\otimes_\sigma\ost \quad\mbox{and}\quad
\oss\otimes_\sigma\ost\coisubset\oss\otimes_\tau\ost \,.
\]

When $\oss_1\otimes_\tau\oss_2 \subseteq^+\oss_1\otimes_\sigma \oss_2,$ then we will also write $\tau \ge \sigma$
and say that $\tau$ majorizes $\sigma$.

In the sequel, we will use extensively the following operator system tensor products
introduced in \cite{kavruk--paulsen--todorov--tomforde2011}:

\smallskip

(a) \emph{The minimal tensor product $\min$.} If $\cl S\subseteq \cl B(\cl H)$ and $\cl T\subseteq \cl B(\cl K)$, where
$\cl H$ and $\cl K$ are Hilbert spaces, then $\cl S\otimes_{\min}\cl T$ is the operator system arising from the
natural inclusion of $\cl S\otimes\cl T$ into $\cl B(\cl H\otimes\cl K)$.

\smallskip

(b) \emph{The maximal tensor product $\max$.} For each $n\in \mathbb{N}$, let
$D_n = \{A^*(P\otimes Q)A : A\in \cl M_{n,km}(\mathbb{C}), P\in \cl M_k(\cl S)_+, Q\in \cl M_m(\cl T)_+\}$.
The Archimedanization \cite{paulsen--todorov--tomforde2011} of the family $(D_n)_{n\in \mathbb{N}}$
of cones is an operator system structure on $\cl S\otimes\cl T$; the corresponding operator system
is denoted by $\cl S\otimes_{\max}\cl T$.

\smallskip

(c) \emph{The commuting tensor product ${\rm c}$.} By definition, an element $X\in \cl M_n(\cl S\otimes\cl T)$
belongs to the postive cone $\cl M_n(\cl S\oc\cl T)_+$ if
$(\phi\cdot\psi)^{(n)}(X)$ is a positive operator for all completely positive maps $\phi : \cl S\to \cl B(\cl H)$
and $\psi : \cl T\to \cl B(\cl H)$ with commuting ranges. Here, the linear map
$\phi\cdot\psi : \cl S\otimes \cl T\to \cl B(\cl H)$
is given by $\phi\cdot\psi(x\otimes y) = \phi(x)\psi(y)$, $x\in \cl S$, $y\in \cl T$.

\smallskip

The tensor products min, c, and max are functorial in the sense that
if $\tau$ denotes any of them, and $\phi : \cl S_1\to \cl S_2$ and $\psi : \cl T_1\to \cl T_2$ are
completely positive maps, then the tensor product map
$\phi\otimes\psi : \cl S_1\otimes_{\tau}\cl T_1\to \cl S_2\otimes_{\tau}\cl T_2$ is completely positive.

A new tensor product ${\rm ess}$ will be introduced in the next section. This tensor product and the three tensor products
mentioned above satisfy the relations
\[
\oss\omax\ost\,\subseteq^+\,\oss\oc\ost\,\subseteq^+\,\oss\oess \ost \subseteq^+ \oss\omin\ost
\]
for all operator systems $\oss$ and $\ost$.

For every operator system $\cl S$, we denote by $\cl S^d$ the (normed space) dual of $\cl S$.
The space $\cl M_n(\cl S^d)$ can be naturally identified with a subspace of the space
$\cl L(\cl S,\cl M_n)$ of all linear maps from $\cl S$ into $\cl M_n$. 
Taking the pre-image of the cone of all completely positive maps in $\cl L(\cl S,\cl M_n)$, we obtain a family 
of matricial cones on $\cl S^d$, which turns it into an operator system. 
We have, in particular, that $\cl S^d_+$ consists of all positive functionals on $\cl S$; the elements $\phi\in \cl S^d_+$ with $\phi(1) = 1$
are called \emph{states} of $\cl S$. An important case arises when $\cl S$ is finite dimensional; in this case, $\cl S^d$ is an operator system
when equipped with the matricial cones family just described and an order unit is given by any faithful state on $\cl S$ \cite[Corollary 4.5]{choi--effros1977}.

We now move to the notion of quotients in the operator system category.

\begin{definition} A linear subspace $\jay\subseteq \oss$ of an operator system $\oss$ is called a \emph{kernel} if there is an operator system $\ost$
and a completely positive linear map $\phi:\oss\rightarrow\ost$ such that $\jay=\ker\phi$.
\end{definition}

If $\jay\subseteq\oss$ is kernel, then one may endow the $*$-vector space $\oss/\jay$ with an operator system structure
such that the canonical quotient map $q_\jay:\oss\rightarrow\oss/\jay$ is unital and completely positive
\cite{kavruk--paulsen--todorov--tomforde2010}.
Moreover, if $\jay\subseteq\ker\phi$ for some completely positive map $\phi:\oss\rightarrow\ost$,
then there exists a completely positive map $\dot{\phi}:\oss/\jay\rightarrow\ost$
such that $\phi=\dot{\phi}\circ q_\jay$.

\begin{definition} A completely positive map $\phi:\oss\rightarrow\ost$ is called a
\emph{complete quotient map} if the natural quotient map $\dot{\phi}:\oss/\ker\phi\rightarrow\ost$
is a complete order isomorphism.
\end{definition}

The following complete quotient map will be used in the sequel.

\begin{example}\label{quo map} {\rm (\cite{farenick--paulsen2011})}
Let $u_1,\dots,u_{k-1}$ denote free unitary generators of $\cstar(\mathbb F_{k-1})$ and set $\oss_{k-1}\subset \cstar(\mathbb F_{k-1})$ to be the span of
$\{1, u_1, u_1^*,\dots,u_{k-1}, u_{k-1}^*\}$.
Let $\ost_k$ operator subsystem of $\M_k$ consisting of all tri-diagonal matrices and let
$\{E_{ij}\}_{|i-j|\leq 1}\subset\M_k$ denote the matrix units that span $\ost_k$. Then the linear map
$\phi_k:\ost_k\rightarrow\oss_{k-1}$ for which
\[
\phi_k(E_{ii}) = \frac{1}{k}1,\;  \phi_k(E_{j,j+1}) = \frac{1}{k}u_j,
\;\mbox{ and }\; \phi_k(E_{j+1,j}) = \frac{1}{k}u_{j}^*
\]
for $i=1,\dots,k$ and $j=1,\dots,k-1$, is a complete quotient map.
\end{example}

We remark that the operator system quotient $\ost_k/\ker\phi_k$ in Example \ref{quo map} is in fact different
from the operator space quotient $(\ost_k/\ker\phi_k)_{{\rm osp}}$ within the category of operator spaces
\cite{farenick--paulsen2011}.

To determine which surjective completely positive maps are complete quotient maps, there is a useful criterion based on strict positivity.

\begin{definition}\label{strict pos} An element $h$ of an operator system $\oss$ is called
\emph{strictly positive} if there is a $\delta>0$ such that $h-\delta 1_\oss\in\oss_+$.
\end{definition}

The following fact will be useful for us on several occasions.

\begin{proposition}\label{quo map criterion} {\rm (\cite[Proposition 3.2]{farenick--kavruk--paulsen2011})}
A completely positive surjection $\phi:\oss\rightarrow\ost$ is a complete quotient map if and only if
for every $p\in\mathbb N$ and every strictly positive element $Y\in\M_p(\ost)$ there is a strictly positive
element $X\in\M_p(\oss)$ such that $Y=\phi^{(p)}(X)$.
\end{proposition}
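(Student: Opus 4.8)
The plan is to reduce the statement to the explicit description of the matricial cones on the quotient operator system $\oss/\ker\phi$ and to exploit the Archimedean closure built into their construction. Write $\jay=\ker\phi$, let $q=q_\jay:\oss\to\oss/\jay$ be the quotient map, and let $\dot{\phi}:\oss/\jay\to\ost$ be the induced bijection, so that $\phi=\dot{\phi}\circ q$. Since $q$ and $\phi$ are unital and completely positive, so is $\dot{\phi}$; hence $\phi$ is a complete quotient map precisely when $\dot{\phi}^{-1}$ is completely positive, that is, when $(\dot{\phi}^{-1})^{(p)}(\M_p(\ost)_+)\subseteq\M_p(\oss/\jay)_+$ for every $p$. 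Throughout I write $1_p$ for the order unit of whichever of $\M_p(\oss)$, $\M_p(\ost)$, $\M_p(\oss/\jay)$ is relevant, noting $q^{(p)}(1_p)=1_p$ and, by unitality, $\phi^{(p)}(1_p)=1_p$. The key structural fact I would invoke is that $\M_p(\oss/\jay)_+$ is the Archimedean closure of the cone $q^{(p)}(\M_p(\oss)_+)$; concretely, $W\in\M_p(\oss/\jay)_+$ if and only if $W+\epsilon\,1_p\in q^{(p)}(\M_p(\oss)_+)$ for every $\epsilon>0$.

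For the forward implication, assume $\phi$ is a complete quotient map and let $Y\in\M_p(\ost)$ be strictly positive, say $Y-\delta\,1_p\in\M_p(\ost)_+$ for some $\delta>0$. Applying the completely positive map $(\dot{\phi}^{-1})^{(p)}$ and using unitality, I obtain that $W:=(\dot{\phi}^{-1})^{(p)}(Y)$ is strictly positive in $\M_p(\oss/\jay)$, with $W-\delta\,1_p\in\M_p(\oss/\jay)_+$. The crux is then to lift $W$ to a \emph{strictly} positive element of $\M_p(\oss)$. Applying the Archimedean description to $W-\delta\,1_p$ with $\epsilon=\tfrac{\delta}{2}$ gives $W-\tfrac{\delta}{2}\,1_p=q^{(p)}(X_1)$ for some $X_1\in\M_p(\oss)_+$. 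Setting $X:=X_1+\tfrac{\delta}{2}\,1_{\M_p(\oss)}$ then yields a strictly positive element of $\M_p(\oss)$ with $q^{(p)}(X)=W$, whence $\phi^{(p)}(X)=\dot{\phi}^{(p)}(W)=Y$, as required.

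For the converse, suppose every strictly positive $Y\in\M_p(\ost)$ admits a strictly positive preimage under $\phi^{(p)}$. Fix $Y\in\M_p(\ost)_+$ and $\epsilon>0$; then $Y+\epsilon\,1_p$ is strictly positive, so there is a strictly positive $X_\epsilon\in\M_p(\oss)$ with $\phi^{(p)}(X_\epsilon)=Y+\epsilon\,1_p$. In particular $X_\epsilon\in\M_p(\oss)_+$, so $q^{(p)}(X_\epsilon)\in q^{(p)}(\M_p(\oss)_+)$, and this element equals $(\dot{\phi}^{-1})^{(p)}(Y+\epsilon\,1_p)=(\dot{\phi}^{-1})^{(p)}(Y)+\epsilon\,1_p$, again using unitality to compute $(\dot{\phi}^{-1})^{(p)}(1_p)=1_p$. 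Since $\epsilon>0$ is arbitrary, the Archimedean description forces $(\dot{\phi}^{-1})^{(p)}(Y)\in\M_p(\oss/\jay)_+$, establishing complete positivity of $\dot{\phi}^{-1}$ and hence that $\phi$ is a complete quotient map.

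The main obstacle I anticipate is the careful bookkeeping with the Archimedeanization: the raw cone $q^{(p)}(\M_p(\oss)_+)$ need not be closed, and both directions hinge on passing to its Archimedean closure. The delicate point is the lifting step in the forward direction, where one must produce a preimage that is strictly positive rather than merely positive; the splitting at $\epsilon=\tfrac{\delta}{2}$ is precisely what secures this. I would also verify at the outset that $\dot{\phi}$ is unital, so that $(\dot{\phi}^{-1})^{(p)}(1_p)=1_p$ in the computations above; this is where the hypothesis that $\phi$ preserves the order unit is used.
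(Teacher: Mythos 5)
The paper itself contains no proof of this proposition --- it is imported, with a citation, from \cite[Proposition 3.2]{farenick--kavruk--paulsen2011} --- so your proposal can only be compared with the standard argument, and in essence it is that argument. Your route is the natural one: factor $\phi=\dot{\phi}\circ q_\jay$, observe that $\phi$ is a complete quotient map exactly when $\dot{\phi}^{-1}$ is completely positive (since $\dot{\phi}$ is automatically completely positive), and invoke the defining feature of the quotient construction of \cite{kavruk--paulsen--todorov--tomforde2010}, namely that $\M_p(\oss/\jay)_+$ is the Archimedeanization of the image cone $q_\jay^{(p)}(\M_p(\oss)_+)$. Granting unitality, both directions are executed correctly: the $\epsilon=\delta/2$ splitting that upgrades a positive lift of $W-\tfrac{\delta}{2}1_p$ to a strictly positive lift of $W$, and, conversely, the passage from strictly positive lifts of $Y+\epsilon 1_p$ to membership of $(\dot{\phi}^{-1})^{(p)}(Y)+\epsilon 1_p$ in the image cone for every $\epsilon>0$.

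The one genuine defect is your treatment of unitality. The proposition assumes only that $\phi$ is a completely positive surjection; the ``hypothesis that $\phi$ preserves the order unit'' invoked in your closing paragraph does not exist in the statement. You use unitality twice in an essential way: in the forward direction, $(\dot{\phi}^{-1})^{(p)}(1_p)=1_p$ is needed to conclude that $W$ is strictly positive, and in the converse, $\phi^{(p)}(1_p)=1_p$ is needed to identify $q_\jay^{(p)}(X_\epsilon)$ with $(\dot{\phi}^{-1})^{(p)}(Y)+\epsilon 1_p$. Neither identity holds in general, so as written your argument proves the proposition only for unital $\phi$ (which is, admittedly, the only case the paper ever uses). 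The repair is routine and worth recording: any positive surjection $\theta$ between operator systems carries the order unit to an order unit (every self-adjoint $t$ in the target equals $\theta(s)$ with $s=s^*\le r1$, whence $t\le r\,\theta(1)$), and every order unit $e$ dominates $\delta_0 1$ for some $\delta_0>0$ by the Archimedean property; thus $\theta(1)$ is strictly positive. Applying this to $\dot{\phi}^{-1}$ in the forward direction gives $(\dot{\phi}^{-1})(1_\ost)\ge\delta_0 1$, hence $W\ge\delta\delta_0 1_p$, and your lifting step proceeds unchanged. In the converse, applying it to $\phi$ gives $\phi(1_\oss)\ge\delta_0 1_\ost$, so $Y+\epsilon\,\phi^{(p)}(1_p)$ is strictly positive for each $\epsilon>0$; a positive preimage $X_\epsilon$ of it satisfies $q_\jay^{(p)}(X_\epsilon)=(\dot{\phi}^{-1})^{(p)}(Y)+\epsilon 1_p$, because $\phi^{(p)}(1_p)=\dot{\phi}^{(p)}(1_p)$ by unitality of $q_\jay$, and the Archimedean description then closes the argument exactly as you wrote it.
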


We record here for future reference two facts about positivity, the first of which is elementary and its proof is thus omitted.

\begin{lemma}\label{schur} Let $\ost$ be an operator system.
If $A=[\alpha_{ij}]_{i,j}\in\M_p(\CC)_+$ and $T=[t_{ij}]_{i,j}\in\M_p(\ost)_+$,
then $A\circ T:=[\alpha_{ij}t_{ij}]_{i,j}\in\M_p(\ost)_+$.
\end{lemma}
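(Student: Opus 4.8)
The plan is to reduce the statement to the classical Schur product theorem via a factorization of the scalar matrix $A$ together with the elementary observation that congruence by a scalar matrix preserves positivity in $\M_p(\ost)$. First I would use the positivity of $A\in\M_p(\CC)_+$ to write $A=B^*B$ for some $B=[b_{ij}]\in\M_p(\CC)$, so that $\alpha_{ij}=\sum_k\overline{b_{ki}}\,b_{kj}$. For each fixed index $k$ I would then form the scalar diagonal matrix $D_k=\mathrm{diag}(b_{k1},\dots,b_{kp})\in\M_p(\CC)$, viewed inside $\M_p(\ost)$ by identifying scalars with multiples of $1_\ost$.

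The key computation is that the $(i,j)$ entry of $D_k^*\,T\,D_k$ equals $\overline{b_{ki}}\,t_{ij}\,b_{kj}$, so that summing over $k$ gives $\sum_k D_k^*\,T\,D_k=[\alpha_{ij}t_{ij}]=A\circ T$. Thus it suffices to show that each summand $D_k^*\,T\,D_k$ lies in $\M_p(\ost)_+$ and that this cone is closed under finite sums.

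For the positivity of the individual summands I would embed $\ost\subseteq\B(\H)$, so that $\M_p(\ost)_+=\M_p(\ost)\cap\B(\H^p)_+$. Since each $D_k$ has scalar entries, $D_k^*\,T\,D_k$ again has all entries in $\ost$ and hence lies in $\M_p(\ost)$; and as an operator on $\H^p$ it satisfies $\langle D_k^*\,T\,D_k\,\xi,\xi\rangle=\langle T(D_k\xi),D_k\xi\rangle\ge0$ because $T\ge 0$. Therefore $D_k^*\,T\,D_k\in\M_p(\ost)_+$, and $A\circ T$, being a finite sum of such elements, is itself positive.

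I do not anticipate any serious obstacle, as the argument is purely formal. The only point requiring a small amount of care is verifying that congruence by a scalar matrix keeps the entries inside $\ost$, so that the resulting element is genuinely a member of $\M_p(\ost)$ and not merely a positive operator on $\H^p$; this is immediate from the fact that $\ost$ is a subspace of $\B(\H)$ and that the entries of $D_k^*\,T\,D_k$ are scalar multiples of the $t_{ij}\in\ost$. An alternative route, which I would mention only in passing, realizes $A\circ T$ as the compression of the positive operator $A\otimes T$ by the isometry $V:\H^p\to\H^{p^2}$ sending $e_i\otimes\xi$ to $e_i\otimes e_i\otimes\xi$, giving $V^*(A\otimes T)V=A\circ T$; but the diagonal-congruence argument above is the more elementary and self-contained one.
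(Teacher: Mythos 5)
Your proof is correct. Note that the paper itself offers no proof to compare against: it declares Lemma \ref{schur} ``elementary and its proof is thus omitted.'' Your diagonal-congruence argument --- factoring $A=B^*B$, writing $A\circ T=\sum_k D_k^*TD_k$ with $D_k=\mathrm{diag}(b_{k1},\dots,b_{kp})$, and observing that each congruence by a scalar matrix preserves both positivity on $\H^p$ and membership of the entries in $\ost$ --- is the standard one, is complete as written, and fully justifies the paper's claim that the fact is elementary.
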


\begin{lemma}\label{f-dim max} If $h\in\oss\omax\ost$ is strictly positive, then
there exist $n\in\mathbb N$, $S=[s_{ij}]\in\M_n(\oss)_+$ and $T=[t_{ij}]\in\M_n(\ost)_+$ such that
\begin{equation}\label{expression}
h\,=\,\sum_{i,j=1}^n s_{ij}\otimes t_{ij}\,.
\end{equation}
\end{lemma}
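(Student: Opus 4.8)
The plan is to use strict positivity to place $h$ in the defining cone of the maximal tensor product (before Archimedeanization) and then reorganize the resulting expression. Recall that $(\oss\omax\ost)_+$ is the Archimedeanization of the cone $D_1$, so an element $g$ lies in $(\oss\omax\ost)_+$ exactly when $g+\epsilon(1\otimes 1)\in D_1$ for every $\epsilon>0$. Since $h$ is strictly positive, I would fix $\delta>0$ with $g:=h-\delta(1\otimes 1)\in(\oss\omax\ost)_+$ (Definition~\ref{strict pos}). Applying the above description to $g$ with the single admissible choice $\epsilon=\delta$ yields $h=g+\delta(1\otimes 1)\in D_1$. Consequently there exist $k,m\in\mathbb N$, a row vector $A=[\alpha_{bd}]\in\M_{1,km}(\CC)$ (columns ordered by pairs $(b,d)$) and positive matrices $P=[p_{ab}]\in\M_k(\oss)_+$ and $Q=[q_{cd}]\in\M_m(\ost)_+$ with $h=A^*(P\otimes Q)A$; expanding, $h=\sum_{a,b,c,d}\overline{\alpha_{ac}}\,\alpha_{bd}\,p_{ab}\otimes q_{cd}$.

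Next I would set $n=km$ and index the rows and columns of the sought matrices by the pairs $(a,c)$. Define $T=[t_{(a,c),(b,d)}]\in\M_n(\ost)$ by $t_{(a,c),(b,d)}=q_{cd}$ and $S=[s_{(a,c),(b,d)}]\in\M_n(\oss)$ by $s_{(a,c),(b,d)}=\overline{\alpha_{ac}}\,\alpha_{bd}\,p_{ab}$. By construction $\sum_{(a,c),(b,d)}s_{(a,c),(b,d)}\otimes t_{(a,c),(b,d)}=h$, which is exactly the expression \eqref{expression} under this indexing. Moreover $T=J_k\otimes Q$, where $J_k\in\M_k(\CC)_+$ is the all-ones matrix, so $T\in\M_n(\ost)_+$, being the tensor product of a positive scalar matrix with the positive matrix $Q$.

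The only step demanding real work, and the main obstacle, is verifying that $S$ is positive, since the scalars coming from $A$ couple the $(a,c)$- and $(b,d)$-indices and so do not visibly respect a tensor factorization. To handle this I would realize $\oss\subseteq\mathcal B(\mathcal H)$ and test $S$ against an arbitrary family $(\xi_{ac})\subseteq\mathcal H$, computing $\sum_{(a,c),(b,d)}\langle s_{(a,c),(b,d)}\xi_{bd},\xi_{ac}\rangle$. The decisive observation is that the scalars can be absorbed into the \emph{partially contracted} vectors $\eta_a:=\sum_c\alpha_{ac}\xi_{ac}$, collapsing the quadruple sum to $\sum_{a,b}\langle p_{ab}\eta_b,\eta_a\rangle$, which is nonnegative precisely because $P\in\M_k(\oss)_+$. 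This yields $S\in\M_n(\oss)_+$ and, together with the previous paragraph, proves the lemma.
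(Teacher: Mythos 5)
Your proof is correct, and its decisive first step is exactly the paper's: strict positivity (Definition \ref{strict pos}) plus the description of the max cone as the Archimedeanization of $D_1$ lets you write $h=(h-\delta 1)+\delta 1\in D_1$, i.e. $h=\sum_{a,b,c,d}\overline{\alpha_{ac}}\,\alpha_{bd}\,p_{ab}\otimes q_{cd}$ with $P=[p_{ab}]\in\M_k(\oss)_+$ and $Q=[q_{cd}]\in\M_m(\ost)_+$. Where you diverge is in the reorganization, i.e.\ in the decomposition actually produced. The paper keeps the index set of size $k$: it sets $s_{ij}=p_{ij}$ and absorbs all the scalars into the second factor, $t_{ij}=\sum_{c,d}\overline{\alpha_{ic}}\,q_{cd}\,\alpha_{jd}$, so that $[t_{ij}]$ is the congruence $A^*QA$ of $Q$ by a scalar matrix and its positivity is immediate. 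You instead pass to the product index set of size $n=km$, take $T=J_k\otimes Q$ (with $J_k$ the all-ones matrix), and load the scalars onto $S$, whose positivity you verify by hand via the contracted vectors $\eta_a=\sum_c\alpha_{ac}\xi_{ac}$. That computation is correct; note that it is in effect a direct proof that the Schur product of the rank-one positive scalar matrix $\left[\overline{\alpha_{ac}}\,\alpha_{bd}\right]$ with $P\otimes J_m$ is positive, i.e.\ a special case of Lemma \ref{schur}, which you could simply have cited. Both routes are sound: the paper's gives smaller matrices ($k\times k$ rather than $km\times km$) and requires no auxiliary positivity argument, while yours is more symmetric in the two factors at the cost of the blow-up in size. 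One cosmetic slip: with $A$ a $1\times km$ row vector, the product $A^*(P\otimes Q)A$ does not compose dimensionally --- you want $A(P\otimes Q)A^*$, or $A$ taken as a column; your expanded sum is the correct one in any case (and the paper's own definition of $D_n$ carries the same shape inconsistency).
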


\begin{proof} By hypothesis, $h-\delta1\in(\oss\omax\ost)_+$ for some real $\delta>0$. Therefore,
by the definition of the positive cone of $\oss\omax\ost$ \cite[\S5]{kavruk--paulsen--todorov--tomforde2011}, there exist $n,m\in\mathbb N$,
$P\in\M_n(\oss)_+$, $Q\in\M_m(\ost)_+$, and a linear map $\alpha:\mathbb C\rightarrow\mathbb C^{nm}$ such that
\[
h\,=\,(h-\delta1) + \delta1\,=\,\alpha^*(P\otimes Q)\alpha\,=\,\sum_{i,j=1}^n\sum_{k,\ell=1}^m \overline{\alpha}_{ik}p_{ij}\otimes q_{k\ell}\alpha_{j\ell}\,,
\]
where $[\alpha_{11},\dots,\alpha_{1m},\alpha_{21},\dots,\alpha_{2m},\dots] = \alpha$.
For each pair $i,j$, set $s_{ij}=p_{ij}$ and $t_{ij}=\displaystyle\sum_{k,\ell=1}^m\overline{\alpha}_{ik}q_{k\ell}\alpha_{j\ell}$.
Then, letting $A = [\alpha_{ji}]_{i,j}$, we have $[t_{ij}]_{i,j} = A^*TA$; thus, $[t_{ij}]_{i,j} \in \cl M_n(\cl T)_+$ and
formula \eqref{expression} holds.
\end{proof}

%%%%%%%%%%%%%%%%%%%%%%%%%%%
%%%%%%%%%%%%%%%%%%%%%%%%%%%

\section{The General Framework}\label{s_osdg}

In this section, we describe the general framework for the study of operator systems of discrete groups. We recall that
if $\lgG$ is a discrete group, then $\lgG$ embeds canonically into its full group C$^*$-algebra $\cstar(\lgG)$.
The C$^*$-algebra $\cstar(\lgG)$ has the following universal property: for every unitary representation $\pi$ of $\lgG$ on a
Hilbert space $\cl H$, there exists a $*$-representation $\tilde{\pi} : \cstar(\lgG)\to\cl B(\cl H)$ such that $\tilde{\pi}(g) = \pi(g)$
for every $g\in \lgG$.

\begin{definition} Let $\mathfrak u$ be a set of generators of a discrete group $\lgG$. The \emph{operator system generated by $\mathfrak u$} is
the operator subsystem $\oss(\mathfrak u)$ of the group C$^*$-algebra $\cstar(\lgG)$ defined by
\[
\oss(\mathfrak u)\,=\,\mbox{\rm span}\{1,u,u^*\,:\,u\in\mathfrak u\}\,.
\]
\end{definition}

We recall two canonical C$^*$-covers of an operator system $\cl S$. The \emph{universal C$^*$-algebra}
$\cstaru(\cl S)$ of $\cl S$ \cite{kirchberg--wassermann1998}
can be defined by the universal property that whenever $\phi : \cl S\to\cl B(\cl H)$ is a
unital completely positive map for some Hilbert space $\cl H$, there exists a (unique) $*$-representation of
$\cstaru(\cl S)$ on $\cl H$ extending $\phi$. The \emph{enveloping C$^*$-algebra} $\cstare(\cl S)$ of $\cl S$
\cite{hamana1979b,Paulsen-book} is
defined by the universal property that whenever $\cl A$ is a C$^*$-cover of $\cl S$, there exists a $*$-homomorphism
$\pi : \cl A\to \cstare(\cl S)$ such that $\pi(x) = x$ for every $x\in \cl S$.

Let $\iota_{\rm e}$ and $\iota_{\rm u}$
denote, respectively, the unital completely isometric embeddings of $\oss(\mathfrak u)$ into $\cstare(\oss(\mathfrak u))$ and $\cstaru(\oss(\mathfrak u))$, and let
\[
\osr(\mathfrak u)\,=\,\iota_{\rm u}\left( \oss(\mathfrak u)\right)\,.
\]
Thus, $\oss(\mathfrak u)$ and $\osr(\mathfrak u)$ are completely order isomorphic operator systems
and $\cstaru(\oss(\mathfrak u))$ is generated as a C$^*$-algebra by $\osr(\mathfrak u)$.
Hence, there are three, possibly distinct, C$^*$-algebras naturally affiliated with the operator system $\oss(\mathfrak u)$:
\begin{enumerate}
\item the C$^*$-envelope $\cstare(\oss(\mathfrak u))$ of $\oss(\mathfrak u)$;
\item the C$^*$-algebra $\cstar(\lgG)$ generated by $\oss(\mathfrak u)$, and
\item the universal C$^*$-algebra $\cstaru(\oss(\mathfrak u))$.
\end{enumerate}
We note that there are a number of differences between these C$^*$-algebras.
For example, each $u\in\oss(\mathfrak u)$ is a unitary element of $\cstar(\lgG)$, but if $\tilde u=\iota_{\rm u}(u)$, then
the construction of $\cstaru(\oss(\mathfrak u))$ \cite[Proposition 8]{kirchberg--wassermann1998} shows that $\tilde u$ is never normal (unless $\lgG$ is the trivial group).
In particular, $\mathfrak u$ is not a set of unitaries in $\cstaru(\oss(\mathfrak u))$.
(As a concrete example, let $\lgG=\mathbb F_1$, the free group on a single generator $u$. Then
$\cstar(\mathbb F_1)$ is $*$-isomorphic via Fourier transform to the abelian 
C$^*$-algebra $C(\mathbb T)$ of all complex valued continuous functions on the unit circle $\mathbb{T}$. 
On the other hand, $\cstaru(\oss(\mathfrak u))$ is nonabelian,
but is singly generated; hence, the generator $\tilde u=\iota_{\rm u}(u)$ is nonnormal.)

The situation with C$^*$-envelopes is more tractable. Indeed, the following proposition is a special case of
\cite[Proposition 5.6]{kavruk2011}.

\begin{proposition}\label{cstare of oss}
Up to a $*$-isomorphism that fixes the elements of $\oss(\mathfrak u)$, we have that $\cstare(\oss(\mathfrak u))=\cstar(\lgG)$.
\end{proposition}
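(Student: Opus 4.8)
The plan is to realize the asserted $*$-isomorphism as the canonical quotient map coming from the co-universal property of the C$^*$-envelope, and then to produce an explicit inverse using Arveson's extension theorem together with a multiplicative-domain argument. Equivalently, this amounts to showing that the Shilov boundary ideal of $\oss(\mathfrak u)$ inside $\cstar(\lgG)$ is trivial.

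First I would observe that, since $\mathfrak u$ generates $\lgG$, the operator system $\oss(\mathfrak u)$ generates $\cstar(\lgG)$ as a C$^*$-algebra; hence $\cstar(\lgG)$, together with the inclusion, is a C$^*$-cover of $\oss(\mathfrak u)$. By the defining property of $\cstare(\oss(\mathfrak u))$ recalled above, there is a surjective $*$-homomorphism $\pi:\cstar(\lgG)\to\cstare(\oss(\mathfrak u))$ with $\pi(x)=\iota_{\rm e}(x)$ for every $x\in\oss(\mathfrak u)$. It remains to prove that $\pi$ is injective. Here the essential point is that each generator $u\in\mathfrak u$ is a unitary of $\cstar(\lgG)$, so its image $\iota_{\rm e}(u)=\pi(u)$ is a unitary of $\cstare(\oss(\mathfrak u))$, being the image of a unitary under a unital $*$-homomorphism.

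To construct an inverse, fix a faithful representation realizing $\cstar(\lgG)\subseteq\cl B(\cl K)$. The map $\iota_{\rm e}(x)\mapsto x$ from $\iota_{\rm e}(\oss(\mathfrak u))$ into $\cl B(\cl K)$ is unital and completely positive, being the composition of the complete order isomorphism $\iota_{\rm e}^{-1}$ with the inclusion $\oss(\mathfrak u)\subseteq\cl B(\cl K)$; so Arveson's extension theorem provides a unital completely positive map $\Phi:\cstare(\oss(\mathfrak u))\to\cl B(\cl K)$ extending it. For each generator $u$, the element $\iota_{\rm e}(u)$ is a unitary of $\cstare(\oss(\mathfrak u))$ whose image $\Phi(\iota_{\rm e}(u))=u$ is again a unitary; hence $\Phi(\iota_{\rm e}(u)^*\iota_{\rm e}(u))=1=\Phi(\iota_{\rm e}(u))^*\Phi(\iota_{\rm e}(u))$, and similarly with the factors reversed, so $\iota_{\rm e}(u)$ and $\iota_{\rm e}(u)^*$ lie in the multiplicative domain of $\Phi$. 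Since these elements generate $\cstare(\oss(\mathfrak u))$ as a C$^*$-algebra and the multiplicative domain is a C$^*$-subalgebra on which $\Phi$ is multiplicative, $\Phi$ is in fact a $*$-homomorphism $\sigma:\cstare(\oss(\mathfrak u))\to\cstar(\lgG)$ with $\sigma(\iota_{\rm e}(u))=u$ for all $u\in\mathfrak u$.

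Finally, the composition $\sigma\circ\pi:\cstar(\lgG)\to\cstar(\lgG)$ fixes every generator $u\in\mathfrak u$ and hence is the identity, so $\pi$ is injective and therefore a $*$-isomorphism fixing $\oss(\mathfrak u)$. I expect the middle step to be the main obstacle: one must first know that the generators of the envelope are genuine unitaries --- a property that, as the excerpt stresses, \emph{fails} in the universal C$^*$-algebra $\cstaru$, so it is crucial to exploit that the envelope is a quotient of $\cstar(\lgG)$ via $\pi$ --- and one must then invoke the multiplicative-domain argument correctly in order to upgrade the merely completely positive extension $\Phi$ to a genuine $*$-homomorphism.
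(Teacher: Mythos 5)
Your proof is correct. The paper does not actually prove Proposition \ref{cstare of oss} itself---it invokes it as a special case of \cite[Proposition 5.6]{kavruk2011}---and your argument (obtaining the surjection $\pi:\cstar(\lgG)\to\cstare(\oss(\mathfrak u))$ from the co-universal property of the envelope, noting that each $\iota_{\rm e}(u)=\pi(u)$ is unitary as the image of a unitary under a unital $*$-homomorphism, then using Arveson's extension theorem and the multiplicative-domain trick to build the inverse $*$-homomorphism $\sigma$ with $\sigma\circ\pi=\mathrm{id}$) is precisely the standard ``enough unitaries'' argument that underlies the cited result, so it matches the intended proof in both structure and key ideas.
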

%\begin{proof} Assume that $\cstar(\oss(\mathfrak u))$ is represented faithfully as a unital C$^*$-subalgebra of $\B(\H)$.
%By the universal property of the C$^*$-envelope, there is an epimorphism $\pi:\cstar(\oss(\mathfrak u))\rightarrow\cstare(\oss(\mathfrak u))$ such that
%$\pi(u)=\iota_{\rm e}(u)$ for each $u\in\mathfrak u$. Because each $u$ is unitary in $\cstar(\oss(\mathfrak u))$, every
%$\iota_{\rm e}(u)$ is therefore unitary in $\cstare(\oss(\mathfrak u))$.

%By definition, $\iota_{\rm e}$ is a complete order isomorphism of $\oss(\mathfrak u)$ and $\iota_{\rm e}(\oss(\mathfrak u))$;
%hence, the function $\phi:\iota_{\rm e}(\oss(\mathfrak u))\rightarrow\oss(\mathfrak u)$ given by $\phi=\iota_{\rm e}^{-1}$ is unital and
%completely positive. Let $\Phi:\cstare(\oss(\mathfrak u))\rightarrow\B(\H)$
%be any ucp extension of $\phi$. As $\Phi$ maps every unitary generator $\iota_{\rm e}(u)$ in $\cstare(\oss(\mathfrak u))$ to a unitary generator $u$ in $\cstar(\oss{\lgG})$,
%the generators $\iota_{\rm e}(u)$ of $\cstare(\oss(\mathfrak u))$ are in the multiplicative domain of $\Phi$. Hence, $\Phi$ is a homomorphism and $\Phi=\pi^{-1}$.
%\end{proof}

Since the C$^*$-envelope of $\oss(\mathfrak u)$ recaptures the group C$^*$-algebra, it is natural to ask
for a description of the C$^*$-algebras arising from the tensor products of group operator systems. 
For arbitrary operator systems we have  \cite[Theorem 6.4]{kavruk--paulsen--todorov--tomforde2011}
\[ \oss \oc \ost \coisubset \cstaru(\oss) \omax\cstaru(\ost).\]
This leads us to the following definition:

\begin{definition} Given operator systems $\oss$ and $\ost$, 
we let $\oss \oess \ost$ be the operator system 
defined by the inclusion
\[ \oss \oess \ost \coisubset \cstare(\oss) \omax \cstare(\ost).\]
\end{definition}

Since the images of $\oss$ and $\ost$ inside $\cstare(\oss) \omax
\cstare(\ost)$ under the canonical identifications
commute, we have that $\oss \oc \ost \subseteq^+ \oss \oess
\ost,$ i.e., ${\rm ess}\le {\rm c}.$  In the next section, we will prove that for free groups these two tensor products are identical. 
For the moment, we turn our attention to some conditions that guarantee the equality of ${\rm ess}$ with other tensor products.

\begin{lemma}\label{multiplication}
Let $\lgG$ and $\lgH$ be discrete groups and $\mathfrak u\subseteq \lgG$ and $\mathfrak v\subseteq \lgH$ be
finite generating sets.
Let $\tau$ be an operator system tensor product and
fix $u\in\mathfrak u$ and $v\in\mathfrak v$.
If $u\otimes 1$ or $1\otimes v$ is a unitary element of
$\cstare(\oss(\mathfrak u)\otimes_\tau \oss(\mathfrak v))$, then
$(u\otimes 1)(1\otimes v)=(1\otimes v)(u\otimes 1)=u\otimes v$.
\end{lemma}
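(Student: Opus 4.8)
The plan is to manufacture a single positive $4\times 4$ matrix over $C := \cstare(\oss(\mathfrak u)\otimes_\tau \oss(\mathfrak v))$ in which the three elements $x := u\otimes 1$, $y := 1\otimes v$ and $z := u\otimes v$ all appear as entries, and then to exploit the hypothesised unitarity of a $2\times 2$ corner to force $z$ to be the product of $x$ and $y$. I want to stress at the outset that $z$ is the image of a \emph{simple tensor}: it is not, a priori, the product $xy$ computed in $C$, and proving that it is, is the entire content of the lemma.

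First I would build the positive matrix in the max tensor product, where positivity is easiest to certify, and then transport it to $C$. Since $u$ and $v$ are unitaries of $\cstar(\lgG)$ and $\cstar(\lgH)$, the matrices $P = \left(\begin{smallmatrix} 1 & u \\ u^* & 1 \end{smallmatrix}\right)$ and $Q = \left(\begin{smallmatrix} 1 & v \\ v^* & 1 \end{smallmatrix}\right)$ lie in $\M_2(\oss(\mathfrak u))_+$ and $\M_2(\oss(\mathfrak v))_+$. Taking $A=I_4$ in the definition of the max cone shows $P\otimes Q\in\M_4(\oss(\mathfrak u)\omax\oss(\mathfrak v))_+$. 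Because $\max$ majorizes every operator system tensor product, in particular $\tau$, and because $\oss(\mathfrak u)\otimes_\tau\oss(\mathfrak v)$ embeds as a complete order embedding into its C$^*$-envelope $C$, the same matrix is positive in $\M_4(C)$. Expanding the Kronecker product and compressing to the index sets $\{1,3,4\}$ and $\{1,2,4\}$ (in the ordering $(1,1),(1,2),(2,1),(2,2)$) leaves the positive matrices
\[
\begin{pmatrix} 1 & x & z \\ x^* & 1 & y \\ z^* & y^* & 1 \end{pmatrix} \geq 0
\qquad\text{and}\qquad
\begin{pmatrix} 1 & y & z \\ y^* & 1 & x \\ z^* & x^* & 1 \end{pmatrix} \geq 0
\]
in $\M_3(C)$.

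The decisive step, which I expect to be the main obstacle conceptually, is to convert unitarity into an exact identity. Assuming $x=u\otimes 1$ is unitary, the corner $A_0 = \left(\begin{smallmatrix} 1 & x \\ x^* & 1 \end{smallmatrix}\right)$ satisfies $A_0^2 = 2A_0$ (this uses both $x^*x=1$ and $xx^*=1$), so $P_0 := \tfrac12 A_0$ is a projection. I would then invoke the elementary rigidity fact: if $\left(\begin{smallmatrix} A_0 & B \\ B^* & C_0 \end{smallmatrix}\right)\geq 0$, conjugation by $\mathrm{diag}(1-P_0,1)$ produces a positive matrix whose $(1,1)$-block is $(1-P_0)A_0(1-P_0)=0$, which forces the adjacent off-diagonal block to vanish, i.e. $(1-P_0)B=0$, equivalently $B=P_0B$. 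Applied to the first matrix with $B=\left(\begin{smallmatrix} z \\ y \end{smallmatrix}\right)$ this reads $\left(\begin{smallmatrix} z \\ y \end{smallmatrix}\right) = \tfrac12\left(\begin{smallmatrix} z + xy \\ x^*z + y \end{smallmatrix}\right)$, whence $xy=z$; applied to the second, after reordering so that $A_0$ occupies the top-left corner, with $B=\left(\begin{smallmatrix} y^* \\ z^* \end{smallmatrix}\right)$, it gives $xz^*=y^*$ and hence $z=yx$. This yields $(u\otimes 1)(1\otimes v)=(1\otimes v)(u\otimes 1)=u\otimes v$.

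The remaining case, in which $1\otimes v$ is the unitary, is handled by the identical argument with $\left(\begin{smallmatrix} 1 & y \\ y^* & 1 \end{smallmatrix}\right)$ now playing the role of $A_0$; the same two $3\times 3$ submatrices then deliver $z=yx$ and $z=xy$. Beyond the rigidity step, the only point demanding care is the transport in the second paragraph: one must be sure the positive matrix $P\otimes Q$ is genuinely available over $C$ even though $u\otimes v$ is not a product there, and this is exactly what $\max\ge\tau$ together with the complete order embedding into $\cstare$ guarantees.
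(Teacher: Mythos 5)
Your proof is correct, and at its core it travels the same road as the paper's: both arguments produce positive $3\times 3$ matrices over $\cstare(\oss(\mathfrak u)\otimes_\tau\oss(\mathfrak v))$ containing $u\otimes 1$, $1\otimes v$, $u\otimes v$ as entries --- indeed your two compressions are \emph{exactly} the paper's matrix $A$ (obtained from $p\otimes q$) and its unwritten counterpart obtained from $\tilde p\otimes\tilde q$ --- and both then convert unitarity of a $2\times 2$ corner into the multiplicative identity via the principle that a positive matrix with a vanishing diagonal block has vanishing adjacent off-diagonal blocks. The differences are in execution, and they favor your version in economy. For the construction: the paper designs two pairs of $3\times 3$ positive matrices $p,q$ and $\tilde p,\tilde q$ and compresses the $9\times 9$ elements $p\otimes q$ and $\tilde p\otimes\tilde q$ separately, whereas you obtain both needed $3\times 3$ matrices as principal compressions (index sets $\{1,3,4\}$ and $\{1,2,4\}$) of the single $4\times 4$ element $P\otimes Q$ built from the obvious $2\times 2$ positives; your transport argument (positivity in $\omax$, then $\tau\le\max$, then the complete order embedding into the C$^*$-envelope) is identical to the paper's and is valid. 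For the rigidity step: the paper runs the Cholesky algorithm, namely positivity of $\left[\begin{smallmatrix} 1 & X \\ X^* & C \end{smallmatrix}\right]$ forces $C - X^*X\ge 0$, whose vanishing diagonal entry then kills the corresponding row; you instead observe that unitarity of $x$ makes $P_0 = \tfrac12\left[\begin{smallmatrix} 1 & x \\ x^* & 1\end{smallmatrix}\right]$ a projection (correctly using both $x^*x=1$ and $xx^*=1$) and compress by the self-adjoint matrix with diagonal blocks $1-P_0$ and $1$. The two mechanisms are equivalent in substance, but yours makes the role of unitarity completely transparent and avoids Schur complements. Your bookkeeping is also complete: in each hypothesis case you extract both $xy=z$ and $yx=z$ from the same pair of matrices, which is the full conclusion the paper reaches only after repeating its entire construction with $\tilde p,\tilde q$.
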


\begin{proof}
The argument is motivated by the ideas of \cite{walter2003}. Each of the matrices
\[
p\,=\,\left[\begin{array}{ccc}1&u&u \\ u^*&1&1 \\ u^*&1&1 \end{array}\right]
\quad\mbox{and}\quad
q\,=\, \left[\begin{array}{ccc}1&1&v\\ 1&1&v \\ v^*&v^*&1 \end{array}\right]
\]
is positive in $\M_3\left(\oss(\mathfrak u)\right)$ and $\M_3\left(\oss(\mathfrak v)\right)$, respectively. Therefore, $p\otimes q$
is positive in $\M_9(\oss(\mathfrak u)\omax\oss(\mathfrak v)).$  Since $\tau \le \max,$ this element is also positive in $\M_9(\oss(\mathfrak u) \otimes_\tau \oss(\mathfrak v) ).$

Let $A\in \M_3\left(\cstare(\oss(\mathfrak u)\otimes_\tau\oss(\mathfrak v))\right)$ be given by
%\begin{equation}\label{ulc}
$$A\,=\,
\left[\begin{array}{c|cc}1\otimes 1&  u\otimes 1&u\otimes v \\ \cline{1-3}  u^*\otimes 1&1\otimes 1&1\otimes v \\ u^*\otimes v^*&1\otimes v^*&1\otimes1 \end{array}\right]
\,=\,
\left[\begin{array}{cc} 1\otimes 1 & X \\ X^* & C\end{array}\right]\,.$$
%\end{equation}
The matrix $A$ is also expressed in block form by
%\begin{equation}\label{lrc}
$$A\,=\,
\left[\begin{array}{cc|c}1\otimes 1&  u\otimes 1&u\otimes v \\  u^*\otimes 1&1\otimes 1&1\otimes v \\ \cline{1-3} u^*\otimes v^*&1\otimes v^*&1\otimes1 \end{array}\right]
\,=\,
\left[\begin{array}{cc} D & Z \\ Z^* & 1\otimes1\end{array}\right]\,.$$
%\end{equation}
Because $A=\alpha(p\otimes q)\alpha^*$ for a suitable rectangular zero-one matrix $\alpha$, the matrix $A$ is a positive element of
$\M_3(\oss(\mathfrak u)\otimes_\tau\oss(\mathfrak v))$.
By the Cholesky Algorithm,
the matrix $\left[\begin{smallmatrix} 1 & X \\ X^* & C \end{smallmatrix}\right]$ is positive if and only if $C-X^*X$ is positive and
the matrix $\left[\begin{smallmatrix} D & Z \\ Z^* & 1 \end{smallmatrix}\right]$ is positive if and only if $D-ZZ^*$ is positive. Hence,
\begin{equation}\label{e1}
0\,\leq\,C-X^*X\,=\,\left[\begin{matrix}  1-(u\otimes1)^*(u\otimes 1) &  1\otimes v - (u\otimes 1)^*(u\otimes v) \\ *   &*\end{matrix}\right]
\end{equation}
and
\begin{equation}\label{e2}
0\,\leq\,D-ZZ^*\,=\,\left[\begin{matrix}  *\hskip 6 pt &  u\otimes 1-(u\otimes v)(1\otimes v)^*\\ *\hskip 6 pt   & 1\otimes1-(1\otimes v)(1\otimes v)^*\end{matrix}\right]\,.
\end{equation}
Therefore, if $u\otimes 1$ is unitary, then inequality \eqref{e1} holds only if $1\otimes v = (u\otimes 1)^*(u\otimes v)$, and so $(u\otimes 1)(1\otimes v)=u\otimes v$.
Alternatively, if $1\otimes v$ is unitary, then inequality \eqref{e2} holds only if $u\otimes 1=(u\otimes v)(1\otimes v)^*$, which implies $(u\otimes 1)(1\otimes v)=u\otimes v$.
Hence, if $u\otimes 1$ or $1\otimes v$ is unitary, then
$(u\otimes 1)(1\otimes v)=u\otimes v$.

Repeat this argument above using
the matrices
\[
\tilde p\,=\,\left[\begin{array}{ccc}1&1&u \\ 1&1&u \\ u^*&u^*&1 \end{array}\right]
\quad\mbox{and}\quad
\tilde q\,=\, \left[\begin{array}{ccc}1&v&v\\ v^*&1&1 \\ v^*&1&1 \end{array}\right]
\]
in place of $p$ and $q$ to obtain $(1\otimes v)(u\otimes 1)=u\otimes v$.
\end{proof}

%An ambient environment for the operator system tensor product $\oss(\mathfrak u%)\oc\oss(\mathfrak v)$ to reside is
%within the maximal tensor product of the universal C$^*$-algebras generated by $\oss(\mathfrak u)$ and $\oss(\mathfrak v)$ respectively
% \cite[Theorem 6.4]{kavruk--paulsen--todorov--tomforde2011}; that is,
%\[
%\oss(\mathfrak u)\oc\oss(\mathfrak v)\coisubset \cstaru\left(\oss(\mathfrak u)\%right)\omax\cstaru\left(\oss(\mathfrak v)\right)\,.
%\]
The theorem below characterises the situation in which the ambient C$^*$-algebra is not the
maximal tensor products of universal C$^*$-algebras, but rather
the maximal tensor product $\cstare\left(\oss(\mathfrak u)\right)\omax\cstare\left(\oss(\mathfrak v)\right)$ of enveloping C$^*$-algebras.

\begin{theorem}\label{l_uv}
Let $\lgG$ and $\lgH$ be discrete groups and $\mathfrak u\subseteq \lgG$ and $\mathfrak v\subseteq \lgH$ be
%finite 
generating sets.
Suppose that $\tau$ is an operator system tensor product such that
$\cl S(\mathfrak u)\otimes_\tau \cl S(\mathfrak v) \subseteq^+ \cl S(\mathfrak u) \oess \cl S(\mathfrak v)$. %i.e., such that ${\rm ess} \le \tau.$
Then the following statements are equivalent:
\begin{enumerate}

\item\label{ce-1} $u\otimes 1$ and $1\otimes v$ are unitary elements of
$\cstare\left(\cl S(\mathfrak u)\otimes_\tau \cl S(\mathfrak v)\right)$ for every $u\in \mathfrak u$ and $v\in \mathfrak v$;
%\item\label{ce-10} $u\otimes 1$ is a unitary element of
%$\cstare\left(\cl S(\mathfrak u)\otimes_\tau \cl S(\mathfrak v)\right)$ for eve%ry $u\in \mathfrak u$;
%\item\label{ce-11} $1 \otimes v$ is a unitary element of $\cstare\left(\cl S(\m%athfrak u) \otimes_\tau \cl S(\mathfrak v) \right)$ for every $v \in \mathfrak %v$;
%\item\label{ce-1} $u\otimes 1$ and $1\otimes v$ are unitary elements of
%$\cstare\left(\cl S(\mathfrak u)\otimes_\tau \cl S(\mathfrak v)\right)$ for eve%ry $u\in \mathfrak u$ and $v\in \mathfrak v$;
\item\label{ce-2} up to a $*$-isomorphism,  $\cstare\left(\cl S(\mathfrak u)\otimes_\tau \cl S(\mathfrak v)\right) =\cstar(\lgG)\omax\cstar(\lgH)$;
\item\label{ce-3} $\cl S(\mathfrak u)\oess \cl S(\mathfrak v) = \cl S(\mathfrak u)\otimes_\tau \cl S(\mathfrak v)$.
\end{enumerate}
\end{theorem}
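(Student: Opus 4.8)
The plan is to prove the cycle $(2)\Rightarrow(3)\Rightarrow(1)\Rightarrow(2)$, noting that the standing hypothesis $\cl S(\mathfrak u)\otimes_\tau\cl S(\mathfrak v)\subseteq^+\cl S(\mathfrak u)\oess\cl S(\mathfrak v)$ says precisely that $\tau\ge\mathrm{ess}$, so that statement $(3)$ amounts to the single reverse inequality $\mathrm{ess}\ge\tau$. Two of the three arrows are essentially formal. For $(2)\Rightarrow(3)$, granting the $*$-isomorphism $\cstare(\cl S(\mathfrak u)\otimes_\tau\cl S(\mathfrak v))=\cstar(\lgG)\omax\cstar(\lgH)$ that fixes the operator system, the canonical embedding of $\cl S(\mathfrak u)\otimes_\tau\cl S(\mathfrak v)$ into its $C^*$-envelope realises the $\tau$-structure as the one induced from $\cstar(\lgG)\omax\cstar(\lgH)$; but by Proposition \ref{cstare of oss} that induced structure is by definition the ess-structure, so $(3)$ follows. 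For $(3)\Rightarrow(1)$, once $\tau=\mathrm{ess}$ the algebra $\cstar(\lgG)\omax\cstar(\lgH)$ is a $C^*$-cover of $\cl S(\mathfrak u)\oess\cl S(\mathfrak v)$, so its $C^*$-envelope is a unital quotient of it, and a unital $*$-homomorphism carries the unitary $u\otimes1$ to a unitary, giving $(1)$.

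The content lies in $(1)\Rightarrow(2)$, which I would carry out in three moves. First, from $(1)$ Lemma \ref{multiplication} gives that $u\otimes1$ and $1\otimes v$ commute in $\cl A:=\cstare(\cl S(\mathfrak u)\otimes_\tau\cl S(\mathfrak v))$ and that $(u\otimes1)(1\otimes v)$ equals the image of the elementary tensor $u\otimes v$ (and similarly for the adjoint variants). Second, I would upgrade the order inclusions to $*$-homomorphisms: the unital completely positive map $s\mapsto s\otimes1$ of $\cl S(\mathfrak u)$ into $\cl A$ extends, by Arveson's extension theorem, to a unital completely positive map on $\cstar(\lgG)$; since $(1)$ makes each $u\otimes1$ a unitary, Choi's multiplicative domain theorem forces every generator into the multiplicative domain, so the extension is a $*$-homomorphism $\rho_1:\cstar(\lgG)\to\cl A$, and likewise $\rho_2:\cstar(\lgH)\to\cl A$. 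By the first move these have commuting ranges, so the universal property of $\omax$ yields a $*$-homomorphism $\Phi=\rho_1\cdot\rho_2:\cstar(\lgG)\omax\cstar(\lgH)\to\cl A$, which is surjective because its range contains the generators $u\otimes1$ and $1\otimes v$ of $\cl A$.

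Third, I would show $\Phi$ is a $*$-isomorphism. On the algebraic tensor product $\cl S(\mathfrak u)\otimes\cl S(\mathfrak v)$ the map $\Phi$ is the identity (using the first move on the cross terms); being completely positive it gives $\mathrm{ess}\ge\tau$, which with the hypothesis forces $\tau=\mathrm{ess}$, so that $\Phi$ restricts to a complete order isomorphism of $\cl S(\mathfrak u)\oess\cl S(\mathfrak v)$ onto the copy of $\cl S(\mathfrak u)\otimes_\tau\cl S(\mathfrak v)$ in $\cl A$. For injectivity I would invoke a unique extension property of $\cstar(\lgG)\omax\cstar(\lgH)$: any unital completely positive self-map fixing $\cl S(\mathfrak u)\oess\cl S(\mathfrak v)$ fixes the unitaries $u\otimes1$ and $1\otimes v$, hence (multiplicative domain once more) is a $*$-homomorphism and therefore the identity. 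Extending the completely positive inverse of $\Phi$ restricted to $\cl S(\mathfrak u)\oess\cl S(\mathfrak v)$ by Arveson's theorem to a unital completely positive $\Theta$ on $\cl A$, the composite $\Theta\circ\Phi$ fixes the operator system and so equals the identity; thus $\Phi$ is injective, yielding $(2)$.

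The main obstacle is exactly this last injectivity step: it asserts that $\cstar(\lgG)\omax\cstar(\lgH)$ is not merely a $C^*$-cover but the $C^*$-\emph{envelope} of $\cl S(\mathfrak u)\oess\cl S(\mathfrak v)$, equivalently that the Shilov boundary ideal for this operator system vanishes. This is precisely the phenomenon that fails for the max tensor product, where the $C^*$-envelope of $\oss_n\omax\oss_m$ is strictly smaller than $\cstar(\fn)\omax\cstar(\fm)$; the reason it succeeds here is the unitarity hypothesis $(1)$, which through the multiplicative domain produces the unique extension property and hence the one-sided inverse $\Theta\circ\Phi=\mathrm{id}$.
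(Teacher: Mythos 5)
Your proposal is correct and takes essentially the same route as the paper's proof: the same cycle of implications, the same use of Lemma \ref{multiplication}, and the same Arveson-extension plus multiplicative-domain arguments to build the surjective $*$-homomorphism $\rho_1\cdot\rho_2 : \cstar(\lgG)\omax\cstar(\lgH)\to \cstare\left(\cl S(\mathfrak u)\otimes_\tau \cl S(\mathfrak v)\right)$ and the ucp extension $\Theta$ of the canonical map. The only difference is cosmetic: the paper shows $\Theta$ is itself a $*$-homomorphism and exhibits the two maps as mutual inverses on generators, whereas you run the multiplicative-domain rigidity argument on the composite $\Theta\circ\Phi$ to deduce injectivity of $\Phi$ --- the same tools arranged in a slightly different order.
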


\begin{proof}
$(1) \Rightarrow(2)$. By Proposition \ref{cstare of oss},
$\cl S(\mathfrak u)\oess\cl S(\mathfrak v)\coisubset {\rm C}^*(\lgG)\omax {\rm C}^*(\lgH)$.
Suppose that
$\cstar(\lgG)\omax\cstar(\lgH)$ is represented faithfully
as a unital C$^*$-subalgebra of $\cl B(\cl H)$ for some Hilbert space $\cl H$. Using the identity maps $\iota_{\mathfrak u}$ and $\iota_{\mathfrak v}$ on 
$\cl S(\mathfrak u)$ and $\cl S(\mathfrak v)$ respectively,
we define a unital completely positive map $\vartheta: \cl S(\mathfrak u)\otimes_\tau \cl S(\mathfrak v) \to \B(\H)$ via
\[
\vartheta =\iota_{\mathfrak u}\otimes\iota_{\mathfrak v}:\cl S(\mathfrak u)\otimes_\tau \cl S(\mathfrak v)\rightarrow
\cl S(\mathfrak u)\oess \cl S(\mathfrak v)\coisubset \cstar(\lgG)\omax\cstar(\lgH).
\]
Let $\Theta:\cstare(\cl S(\mathfrak u)\otimes_\tau \cl S(\mathfrak v))\rightarrow\B(\H)$ be a completely positive extension of $\vartheta$.
The map $\Theta$ sends every contraction in $\cstare\left(\cl S(\mathfrak u)\otimes_\tau \cl S(\mathfrak v)\right)$ of the form $u\otimes 1$,
$u\in\mathfrak u$, to the unitary element $u\otimes 1$ of ${\rm C}^*(\lgG)\omax {\rm C}^*(\lgH)$.
Since the multiplicative domain of a unital completely positive map \cite{Paulsen-book}
contains all contractions that are sent to unitary elements,
$u\otimes 1$
%and $1\otimes v$
in $\cstare\left(\cl S(\mathfrak u)\otimes_\tau \cl S(\mathfrak v)\right)$
is in the multiplicative domain of $\Theta$ for every $u\in \mathfrak u$.
Similarly, $1 \otimes v$ is in the multiplicative domain of $\Theta$ for every $v\in \mathfrak v$.
Because the multiplicative domain is an algebra, hypothesis \eqref{ce-1}  and Lemma \ref{multiplication} imply that $u\otimes v=(u\otimes1)(1\otimes v)$ is also in
the multiplicative domain of $\Theta$, for every  $u\in \mathfrak{u}$ and $v\in \mathfrak{v}$.
Because
$\cl S(\mathfrak u)\otimes \cl S(\mathfrak v)$ generates $\cstare(\cl S(\mathfrak u)\otimes_\tau \cl S(\mathfrak v))$,
the map $\Theta$ is a $*$-homomorphism of $\cstare(\cl S(\mathfrak u)\otimes_\tau\cl S(\mathfrak v))$ onto $\cstar(\lgG)\omax\cstar(\lgH)$.

Conversely, represent $\cstare(\cl S(\mathfrak u) \otimes_\tau \cl S(\mathfrak v))$ faithfully as a C$^*$-subalgebra of
$\cl B(\cl H)$ for some Hilbert space $\cl H$.
Let $\cl A \subseteq \cl B(\cl H)$ denote the C$^*$-subalgebra generated by the elements of the form $u \otimes 1, u \in \mathfrak u$, 
and let $\cl B$ be the C$^*$-subalgebra generated by elements of the form $1 \otimes v$, $v \in \mathfrak v.$ Hypothesis \eqref{ce-1} and Lemma~\ref{multiplication}
imply that $ab=ba$ whenever $a\in \cl A$ and $b\in \cl B$.

Consider the map $\rho_1 : \cl S(\mathfrak{u}) \to \cl B(\cl H)$
%\cstare(\cl S(\mathfrak u)\otimes_\tau\cl S(\mathfrak v))$
given by
$\rho_1(u) = u\otimes 1$, $u\in \cl S(\mathfrak{u})$, and extend it to a completely positive map on
$\cstar(\lgG),$ still denoted by $\rho_1.$ Because $\rho_1(u)$ is unitary for every $u\in \mathfrak u$,
we have that $\mathfrak u$ is contained in the multiplicative domain of $\rho_1.$
Hence, $\rho_1$ is a $*$-homomorphism and consequently its range is in $\cl A.$ Similarly,  the map $\rho_2 : \cl S(\mathfrak{v}) \to \cl B(\cl H)$
%\cstare(\cl S(\mathfrak u)\omax\cl S(\mathfrak v))$
given by
$\rho_2(v) = 1\otimes v$, $v\in \mathfrak{v}$, extends to a $*$-homomorphism of $\cstar(\lgH)$ into $\cl B.$
Because $\cl A$ and $\cl B$ commute, we obtain a $*$-homomorphism $\rho_1 \otimes \rho_2 : \cstar(\lgG)\omax \cstar(\lgG) \to \cl B(\cl H),$
whose range is $\cstare (\cl S(\mathfrak u) \otimes_\tau \cl S(\mathfrak v)).$
This map is the inverse of $\Theta$ on the set of all generators and is hence $\rho_1 \otimes \rho_2 = \Theta^{-1}.$
Thus, $\Theta$ is a $*$-isomorphism between $\cstar(\lgG) \omax \cstar(\lgH)$ and $\cstare(\cl S(\mathfrak u) \otimes_\tau \cl S(\mathfrak v))$
and so (1) implies (2).

$(2)\Rightarrow(3)$.
Assumption (2) and the definition of the tensor product ${\rm ess}$ yield
\[
\cl S(\mathfrak{u})\oess\cl S(\mathfrak{v})\coisubset\cstar(\lgG)\omax\cstar(\lgH)\,=\,\cstare\left(\cl S(\mathfrak{u})\otimes_\tau\cl S(\mathfrak{v})\right)\,.
\]
On the other hand, we trivially have
$$\cl S(\mathfrak{u})\otimes_{\tau}\cl S(\mathfrak{v})\coisubset \cstare\left(\cl S(\mathfrak{u})\otimes_\tau\cl S(\mathfrak{v})\right).$$
It follows that $\cl S(\mathfrak{u})\oess\cl S(\mathfrak{v}) = \cl S(\mathfrak{u})\otimes_\tau \cl S(\mathfrak{v})$.

$(3)\Rightarrow(1)$.
Let $\iota_{\mathfrak{u}} : \cl S(\mathfrak{u})\to \cstar(\lgG)$ and $\iota_{\mathfrak{v}} : \cl S(\mathfrak{v})\to \cstar(\lgH)$
be the inclusion maps, $\phi=\iota_{\mathfrak{u}}\otimes\iota_{\mathfrak{v}}$ and
let $\osr = \phi(\cl S(\mathfrak{u})\otimes\cl S(\mathfrak{v}))\subseteq\cstar(\lgG)\otimes\cstar(\lgH)$.
By the definition of ${\rm ess}$ and Proposition \ref{cstare of oss}, $\phi$ is a complete order isomorphism of the operator system $\cl S(\mathfrak{u})\oess\cl S(\mathfrak{v})$ and
the operator subsystem $\osr\subseteq\cstar(\lgG)\omax\cstar(\lgH)$. Because $\osr$ generates $\cstar(\lgG)\omax\cstar(\lgH)$
as a C$^*$-algebra, the universal property of the C$^*$-envelope
implies that there exists a $*$-epimorphism $\pi: \cstar(\lgG)\omax\cstar(\lgH)\rightarrow \cstare(\cl S(\mathfrak{u})\oess\cl S(\mathfrak{v}))$
such that $\pi(x)=\iota_{\rm e}(x)$ for every $x\in\cl S(\mathfrak{u})\oess\cl S(\mathfrak{v})$. Because $u\otimes 1,1\otimes v\in \osr$ are unitary elements in
$\cstar(\lgG)\omax\cstar(\lgH)$, $\iota_{\rm e}(u\otimes 1)=\pi(u\otimes 1)$ and $\iota_{\rm e}(1\otimes v)=\pi(1\otimes v)$ are unitary elements of
$\cstare(\cl S(\mathfrak{u})\oess\cl S(\mathfrak{v}))$. By invoking the hypothesis
$\cl S(\mathfrak{u})\oess\cl S(\mathfrak{v}) = \cl S(\mathfrak{u})\otimes_\tau\cl S(\mathfrak{v})$,  we have that
$u\otimes 1$ and $1\otimes v$ are unitary in $\cstare\left(\cl S(\mathfrak{u})\otimes_\tau \cl S(\mathfrak{v})\right)$, for every $u\in \mathfrak{u}$ and
every $v\in \mathfrak{v}$.
\end{proof}

\begin{corollary}\label{oess=oc}
Let
$\mathfrak u$ and $\mathfrak v$ be generating sets of discrete groups $\lgG$ and $\lgH,$ respectively.
Then $\cl S(\mathfrak u) \oess \cl S(\mathfrak v) = \cl S(\mathfrak u) \oc \cl S(\mathfrak v)$ if and only if
 $u\otimes 1$ and $1\otimes v$ are unitary elements of
$\cstare\left(\cl S(\mathfrak u)\oc \cl S(\mathfrak v)\right)$ for every $u\in \mathfrak u$ and $v\in \mathfrak v.$
\end{corollary}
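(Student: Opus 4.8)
The plan is to obtain this as a direct specialization of Theorem~\ref{l_uv} to the choice $\tau = {\rm c}$. To invoke that theorem I must first check that the commuting tensor product is an admissible value of $\tau$, that is, that
\[
\cl S(\mathfrak u)\oc \cl S(\mathfrak v)\,\subseteq^+\,\cl S(\mathfrak u)\oess \cl S(\mathfrak v)\,.
\]
This is precisely the order relation ${\rm ess}\le {\rm c}$ recorded after the definition of the ess tensor product: because the canonical images of $\cl S(\mathfrak u)$ and $\cl S(\mathfrak v)$ inside $\cstare(\cl S(\mathfrak u))\omax\cstare(\cl S(\mathfrak v))$ commute, every commuting pair of completely positive maps factors appropriately and the $\oc$-cones are contained in the $\oess$-cones. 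Thus the hypothesis of Theorem~\ref{l_uv} is satisfied with $\tau = {\rm c}$.

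With that in hand, Theorem~\ref{l_uv} furnishes the equivalence of its three statements, now read with $\tau = {\rm c}$. Statement~\eqref{ce-1} becomes the assertion that $u\otimes 1$ and $1\otimes v$ are unitary in $\cstare(\cl S(\mathfrak u)\oc \cl S(\mathfrak v))$ for all $u\in\mathfrak u$, $v\in\mathfrak v$, which is exactly the right-hand condition of the corollary. Statement~\eqref{ce-3} becomes $\cl S(\mathfrak u)\oess \cl S(\mathfrak v) = \cl S(\mathfrak u)\oc \cl S(\mathfrak v)$, which is the left-hand condition of the corollary. Hence the implication \eqref{ce-1}$\Leftrightarrow$\eqref{ce-3} of the theorem is literally the desired ``if and only if,'' and the proof is complete.

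I expect no genuine obstacle here: all the real work has already been absorbed into Lemma~\ref{multiplication} (which supplies the multiplicativity relation $(u\otimes 1)(1\otimes v)=u\otimes v$ once one of the factors is unitary) and into Theorem~\ref{l_uv} itself. The only point requiring care is the bookkeeping check that ${\rm c}$ satisfies the compatibility inequality $\otimes_\tau \subseteq^+ \oess$ demanded by the theorem; since $\oess$ is the weakest of the four tensor products under consideration and $\oc$ dominates it, this holds automatically and no separate argument is needed. I would therefore present the corollary as a one-paragraph deduction, simply stating that it is the equivalence \eqref{ce-1}$\Leftrightarrow$\eqref{ce-3} of Theorem~\ref{l_uv} applied with $\tau={\rm c}$, after noting that $\cl S(\mathfrak u)\oc\cl S(\mathfrak v)\subseteq^+\cl S(\mathfrak u)\oess\cl S(\mathfrak v)$.
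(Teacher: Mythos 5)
Your proof is correct and is exactly the paper's (implicit) argument: the corollary is presented there as an immediate consequence of Theorem~\ref{l_uv} applied with $\tau = {\rm c}$, the required hypothesis $\cl S(\mathfrak u)\oc\cl S(\mathfrak v)\subseteq^+\cl S(\mathfrak u)\oess\cl S(\mathfrak v)$ being precisely the relation ${\rm ess}\le{\rm c}$ recorded right after the definition of $\oess$, and the statement is then the equivalence \eqref{ce-1}$\Leftrightarrow$\eqref{ce-3}. One cosmetic slip in your closing remark: $\oess$ is not the weakest of the four tensor products ($\omin$ is, in the majorization order $\max\ge{\rm c}\ge{\rm ess}\ge\min$), but this aside plays no role since you justify ${\rm c}\ge{\rm ess}$ correctly via the commuting images in $\cstare(\cl S(\mathfrak u))\omax\cstare(\cl S(\mathfrak v))$.
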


Several discrete groups satisfy the hypothesis
%$\cl S(\mathfrak u)\oc\cl S(\mathfrak v)\coisubset {\rm C}^*(\lgG)\omax {\rm C}^*(\lgH)$
 of Theorem \ref{l_uv}, as we shall see in subsequent sections of the paper.

%%%%%%%%%%%%%%%%%%%%%%%%%%%
%%%%%%%%%%%%%%%%%%%%%%%%%%%]

\section{Operator Systems on Finitely Generated Free Groups}\label{s_osfgfg}

In this section, we study in detail the operator systems of free groups.
Note that definition of $\oss(\mathfrak u)$ depends in general on the choice of the
generating set $\mathfrak u\subseteq\lgG$. We first show that with free groups, we can dispense with this dependence.

\begin{proposition}\label{ind defn} If $\mathfrak u=\{u_1,\dots,u_n\}$ and $\mathfrak v=\{v_1,\dots, v_n\}$ are two sets of generators of the free group $\fn$, then
there is a complete order isomorphism $\phi:\oss(\mathfrak u)\rightarrow\oss(\mathfrak v)$ such that $\phi(u_j)=v_j$,
$j=1,\dots,n$.
\end{proposition}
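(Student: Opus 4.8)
The plan is to realise the desired map as the restriction of a $*$-automorphism of the ambient C$^*$-algebra $\cstar(\fn)$ induced by a group automorphism of $\fn$; the whole argument rests on one group-theoretic rigidity property of free groups.

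First I would record the key fact that any generating set of $\fn$ consisting of exactly $n$ elements is automatically a free basis. Indeed, writing $\{x_1,\dots,x_n\}$ for the standard free basis of $\fn$, the endomorphism determined by $x_j \mapsto u_j$ is surjective because $\mathfrak u$ generates $\fn$; since $\fn$ is Hopfian (being finitely generated and residually finite), this surjective endomorphism is an automorphism, so $\mathfrak u$ is a free basis. The same reasoning applies to $\mathfrak v$. Because $\mathfrak u$ is a free basis, the universal property of the free group then produces a unique group homomorphism $\alpha : \fn \to \fn$ with $\alpha(u_j) = v_j$ for all $j$. As $\mathfrak v$ generates $\fn$, the map $\alpha$ is onto, and the Hopfian property again forces $\alpha$ to be a group automorphism.

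Next I would lift $\alpha$ to the C$^*$-level. By the universal property of the full group C$^*$-algebra recalled at the start of Section \ref{s_osdg}, both $\alpha$ and $\alpha^{-1}$ extend to $*$-homomorphisms of $\cstar(\fn)$; these extensions are mutually inverse on the generators of $\fn$, hence on all of $\cstar(\fn)$, so $\alpha$ induces a $*$-automorphism $\tilde\alpha$ of $\cstar(\fn)$. Since $\tilde\alpha(1) = 1$, $\tilde\alpha(u_j) = v_j$ and $\tilde\alpha(u_j^*) = v_j^*$, the automorphism $\tilde\alpha$ carries $\oss(\mathfrak u)$ bijectively onto $\oss(\mathfrak v)$. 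Being a $*$-isomorphism of $\cstar(\fn)$, the map $\tilde\alpha$ is a complete order isomorphism, so its restriction $\phi = \tilde\alpha|_{\oss(\mathfrak u)}$ is the sought complete order isomorphism, and it satisfies $\phi(u_j) = v_j$.

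The only non-formal ingredient, and thus the main point to get right, is the group-theoretic claim that a size-$n$ generating set of $\fn$ is a free basis, equivalently the Hopfian property of $\fn$. Everything downstream is a routine consequence of the functoriality of the $\cstar(\cdot)$ construction together with the fact that a $*$-isomorphism of a C$^*$-algebra restricts to a complete order isomorphism on any operator subsystem it preserves.
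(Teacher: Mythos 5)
Your proof is correct and takes essentially the same route as the paper: the paper's two-line proof invokes the universality of $\cstar(\fn)$ to produce a $*$-automorphism $\vartheta$ with $\vartheta(u_j)=v_j$ and then restricts it to $\oss(\mathfrak u)$, exactly as you do. Your write-up is in fact more complete, since you supply the group-theoretic justification (an $n$-element generating set of $\fn$ is a free basis, via the Hopfian property) that the paper's appeal to universality leaves implicit.
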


\begin{proof} By the universality of $\cstar(\fn)$, there is a $*$-automorphism $\vartheta$ of $\cstar(\fn)$ for which
$\vartheta(u_j)=v_j$, $j=1,\dots,n$. Thus, $\phi=\vartheta_{\vert\oss(\mathfrak u)}$ is a complete order isomorphism from $\oss(\mathfrak u)$
onto $\oss(\mathfrak v)$.
\end{proof}

Henceforth, let $\mathfrak u=\{u_1,\dots,u_n\}$ and $\mathfrak v=\{v_1,\dots,v_m\}$ be generators for the free groups $\fn$ and $\fm$ respectively
and denote by $\oss_n$ and $\oss_m$, respectively,  the operator subsystems
$\oss(\mathfrak u)\subseteq\cstar(\fn)$ and
$\mathfrak \oss(\mathfrak v)\subseteq\cstar(\fm)$. By Proposition \ref{ind defn}, we may identify $\oss_n$ and $\oss_m$
in the case where $m=n$. Moreover, we write
\[
\oss_n\,=\,\mbox{\rm Span}\,\{u_{-n},\dots,u_{-1},u_0, u_1,\dots, u_n\}\,,
\]
where $u_{-k}=u_k^*$, $k=1,\dots,n$, and $u_0=1$.

Throughout, $\iota_k$ shall denote the inclusion of $\oss_k$ into $\cstar(\fk)$.

%The next result shows that the operator systems $\oss_n$ satisfy the hypothesis% of Theorem \ref{l_uv}.

 \begin{lemma}\label{c-max coi}
 $\oss_n\oc\oss_m\coisubset\cstar(\fn)\omax\cstar(\fm)$ and, consequently, $\oss_n\oc \oss_m = \oss_n \oess \oss_m.$
\end{lemma}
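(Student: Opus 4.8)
The plan is to establish the complete order embedding $\oss_n\oc\oss_m\coisubset\cstar(\fn)\omax\cstar(\fm)$ first, and then deduce the equality $\oss_n\oc\oss_m=\oss_n\oess\oss_m$ as an immediate corollary. Indeed, by the definition of the ess tensor product together with Proposition \ref{cstare of oss} we have $\oss_n\oess\oss_m\coisubset\cstare(\oss_n)\omax\cstare(\oss_m)=\cstar(\fn)\omax\cstar(\fm)$. Thus both $\oc$ and $\oess$ present $\oss_n\otimes\oss_m$ through the \emph{same} canonical map $u_i\otimes v_j\mapsto u_i\otimes v_j$ into $\cstar(\fn)\omax\cstar(\fm)$; once each is known to be a complete order embedding onto the same subsystem, the two matricial orderings pulled back from the ambient C$^*$-algebra coincide, and the equality follows. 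So everything reduces to the embedding claim.

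Since ${\rm ess}\le{\rm c}$, we already have a unital completely positive map $\Phi:\oss_n\oc\oss_m\to\cstar(\fn)\omax\cstar(\fm)$ arising from $\oss_n\oc\oss_m\subseteq^+\oss_n\oess\oss_m\coisubset\cstar(\fn)\omax\cstar(\fm)$, and $\Phi$ is injective onto the canonical copy of $\oss_n\otimes\oss_m$. What remains is to show that $\Phi$ reflects positivity at every matrix level, i.e.\ the reverse inclusion ${\rm c}\le{\rm ess}$, that is $\cl M_p(\oss_n\oess\oss_m)_+\subseteq\cl M_p(\oss_n\oc\oss_m)_+$. By the definition of the commuting tensor product, given $X\in\cl M_p(\oss_n\otimes\oss_m)$ with $\Phi^{(p)}(X)\ge 0$, I must produce, for every pair of unital completely positive maps $\phi:\oss_n\to\cl B(\cl H)$ and $\psi:\oss_m\to\cl B(\cl H)$ with commuting ranges, the inequality $(\phi\cdot\psi)^{(p)}(X)\ge 0$. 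The mechanism is to factor each product map through the ambient C$^*$-algebra: I claim $\phi\cdot\psi$ extends to a unital completely positive map $\Lambda:\cstar(\fn)\omax\cstar(\fm)\to\cl B(\cl H)$ with $\phi\cdot\psi=\Lambda\circ\Phi$ on $\oss_n\otimes\oss_m$, whence $(\phi\cdot\psi)^{(p)}(X)=\Lambda^{(p)}\bigl(\Phi^{(p)}(X)\bigr)\ge 0$.

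To build $\Lambda$ I dilate the commuting pair $(\phi,\psi)$ to a commuting pair of $*$-representations of the two group C$^*$-algebras. Writing $A_i=\phi(u_i)$ and $B_j=\psi(v_j)$, the commuting-range hypothesis means the contractions $\{A_i,A_i^*\}$ commute with $\{B_j,B_j^*\}$. Because $\fn$ and $\fm$ are free, these families satisfy no internal relations, so each family of contractions may be dilated to a family of unitaries, and freeness guarantees that the resulting unitaries automatically define $*$-representations $\pi$ of $\cstar(\fn)$ and $\rho$ of $\cstar(\fm)$ on a common space $\cl K\supseteq\cl H$. If the cross-commutation between the two families can be preserved through the dilation, then $\pi$ and $\rho$ have commuting ranges, $P_{\cl H}\pi(u_i)\rho(v_j)|_{\cl H}=A_iB_j$, and the universal property of $\omax$ for C$^*$-algebras assembles $(\pi,\rho)$ into a single $*$-representation $\pi\cdot\rho$ of $\cstar(\fn)\omax\cstar(\fm)$; then $\Lambda=P_{\cl H}(\pi\cdot\rho)(\cdot)|_{\cl H}$ is the desired extension.

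The main obstacle is exactly this simultaneous commuting unitary dilation: one must dilate $(A_i)$ and $(B_j)$ to unitaries while (i) retaining the commutation between the two families and (ii) preserving the compressions of the mixed products $A_iB_j$. I would carry this out in two stages. First I invoke the general embedding $\oss_n\oc\oss_m\coisubset\cstaru(\oss_n)\omax\cstaru(\oss_m)$ of \cite{kavruk--paulsen--todorov--tomforde2011} to extend $\phi\cdot\psi$ to commuting $*$-representations of the \emph{universal} C$^*$-algebras with the correct compression of each $u_i\otimes v_j$. Then I dilate the (generally non-unitary) images of the generators to unitaries inside the commutant of the opposite representation, using that the defect operators lie in the C$^*$-algebra generated by a given image and hence still commute with the other family; freeness is what lets these unitaries be chosen independently and assembled into representations of $\cstar(\fn)$ and $\cstar(\fm)$. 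This is precisely the feature that fails for general discrete groups, which is why the identity is special to free groups. Equivalently, this dilation amounts to the assertion that $u_i\otimes 1$ and $1\otimes v_j$ are unitary in $\cstare(\oss_n\oc\oss_m)$, in which case Theorem \ref{l_uv} applied with $\tau={\rm c}$ (its hypothesis $\oss_n\oc\oss_m\subseteq^+\oss_n\oess\oss_m$ holding because ${\rm ess}\le{\rm c}$) yields conclusion (3), namely $\oss_n\oess\oss_m=\oss_n\oc\oss_m$, and identifies the C$^*$-envelope as $\cstar(\fn)\omax\cstar(\fm)$.
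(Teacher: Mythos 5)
Your proposal is correct, and it reaches the lemma by a genuinely different, more self-contained route than the paper. The paper's own proof is a short citation argument: by \cite[Lemma 4.1]{farenick--kavruk--paulsen2011} one tensor factor at a time can be enlarged from $\oss_k$ to $\cstar(\fk)$ without changing the c-structure, and by \cite[Theorem 6.7]{kavruk--paulsen--todorov--tomforde2011} one has ${\rm c}=\max$ when a factor is a unital C$^*$-algebra; chaining these twice gives $\oss_n\oc\oss_m\coisubset\oss_n\oc\cstar(\fm)=\oss_n\omax\cstar(\fm)\coisubset\cstar(\fn)\omax\cstar(\fm)$, and the identity $\oss_n\oc\oss_m=\oss_n\oess\oss_m$ then follows, exactly as in your first paragraph, from the definition of ${\rm ess}$ and Proposition \ref{cstare of oss}. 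You instead prove the required extension property directly: for each commuting pair $(\phi,\psi)$ you factor $\phi\cdot\psi$ through $\cstar(\fn)\omax\cstar(\fm)$ by a simultaneous commuting unitary dilation. That mechanism is sound, and it is in fact the engine inside the cited lemma (the paper runs the same argument for $NC(n)$ in Lemma \ref{nc cubes}): the entries of a Halmos dilation lie in the C$^*$-algebra generated by the dilated contraction, so $*$-commutation with the other family survives, and freeness of $\fn$ and $\fm$ is exactly what assembles the dilated unitaries into $*$-representations, with the block structure guaranteeing that compressions of mixed products are preserved. Three small points: (i) your phrase ``dilate \dots inside the commutant of the opposite representation'' only makes literal sense after the opposite family is ampliated to the doubled space, since the dilation changes the Hilbert space --- standard bookkeeping, and your defect-operator justification is the right one; (ii) the preliminary pass through $\cstaru(\oss_n)\omax\cstaru(\oss_m)$ is unnecessary, because $\oss_n={\rm span}\{1,u_i,u_i^*\}$ means commuting ranges already give the $*$-commutation of $\{\phi(u_i)\}$ with $\{\psi(v_j)\}$ needed to dilate $\phi$ and $\psi$ directly; (iii) the closing reformulation via unitarity of $u_i\otimes 1$ and $1\otimes v_j$ in $\cstare(\oss_n\oc\oss_m)$ together with Corollary \ref{oess=oc} is a legitimate equivalent packaging, though as written it is a consequence of the dilation argument rather than an independent shortcut. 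The paper's route buys brevity by reusing established results; yours buys a self-contained proof that makes explicit where freeness of the groups enters, which is precisely the feature that fails for general discrete groups.
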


\begin{proof} Since $\osr\oc\oss_n\coisubset\osr\oc\cstar(\fn)$
for every $n\in\mathbb N$ and every operator system $\osr$ \cite[Lemma 4.1]{farenick--kavruk--paulsen2011},
and because ${\rm c} = \max$ if one of the tensor factors is a unital C$^*$-algebra
\cite[Theorem 6.7]{kavruk--paulsen--todorov--tomforde2011}, we deduce that
\[
%\begin{array}{rcl}
\oss_n\oc\oss_m \coisubset \oss_n\omax\cstar(\fm)
=\oss_n \oc\cstar(\fm)
\coisubset \cstar(\fn) \omax \cstar(\fm)\,.
%\end{array}
\]
Hence, $\oss_n\oc\oss_m\coisubset\cstar(\fn)\omax\cstar(\fm)$.
The last statement follows by the definition of ${\rm ess}$ and Proposition \ref{cstare of oss}.
\end{proof}

\begin{proposition}\label{c-min}
Let $\cl R$ be an operator system.
Then $\cl R\omin \oss_1 = \cl R \oc \oss_1$. In particular,
$\oss_1\omin\oss_1=\oss_1\oc\oss_1$.
\end{proposition}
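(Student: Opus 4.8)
The plan is to prove the more general assertion $\cl R\omin\oss_1=\cl R\oc\oss_1$ and then specialise to $\cl R=\oss_1$. Since the general chain of inequalities gives $\cl R\oc\oss_1\subseteq^+\cl R\omin\oss_1$ for free, the content is the reverse inclusion of cones, and the key observation driving the argument is that $\cstar(\mathbb F_1)=\cstar(\mathbb Z)$ is $*$-isomorphic (via the Fourier transform) to the abelian, hence nuclear, C$^*$-algebra $C(\mathbb T)$, inside which $\oss_1$ sits as an operator subsystem. So I would reduce the problem about $\oss_1$ to the nuclearity of its ambient C$^*$-algebra.

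Concretely, I would first invoke \cite[Lemma 4.1]{farenick--kavruk--paulsen2011} (used already in the proof of Lemma~\ref{c-max coi}) in the case $n=1$ to obtain the complete order embedding
\[
\cl R\oc\oss_1\coisubset\cl R\oc\cstar(\mathbb F_1)=\cl R\oc C(\mathbb T).
\]
Next, since one tensor factor is now a unital C$^*$-algebra, \cite[Theorem 6.7]{kavruk--paulsen--todorov--tomforde2011} gives $\cl R\oc C(\mathbb T)=\cl R\omax C(\mathbb T)$, and the nuclearity of the abelian C$^*$-algebra $C(\mathbb T)$ yields $\cl R\omax C(\mathbb T)=\cl R\omin C(\mathbb T)$ for every operator system $\cl R$ (see \cite{kavruk--paulsen--todorov--tomforde2011}). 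Combining these identifications, I conclude that the inclusion $\iota_{\cl R}\otimes\iota_1:\cl R\otimes\oss_1\to\cl R\otimes C(\mathbb T)$ is a complete order embedding of $\cl R\oc\oss_1$ into $\cl R\omin C(\mathbb T)$. On the other hand, the injectivity of the minimal tensor product, applied to the complete order inclusion $\oss_1\coisubset C(\mathbb T)$, shows that the very same map realises $\cl R\omin\oss_1\coisubset\cl R\omin C(\mathbb T)$.

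The conclusion then follows because the minimal and the commuting structures on the common underlying space $\cl R\otimes\oss_1$ are each recovered as the preimage, under one and the same inclusion $\iota_{\cl R}\otimes\iota_1$, of the cones of $\cl R\omin C(\mathbb T)$; hence $\cl M_p(\cl R\omin\oss_1)_+=\cl M_p(\cl R\oc\oss_1)_+$ for every $p$, and $\cl R\omin\oss_1=\cl R\oc\oss_1$. Taking $\cl R=\oss_1$ gives $\oss_1\omin\oss_1=\oss_1\oc\oss_1$. The step requiring the most care is the last one: I must check that the two $\coisubset$ embeddings into $\cl R\omin C(\mathbb T)$ are induced by the \emph{same} underlying vector-space inclusion $\iota_{\cl R}\otimes\iota_1$, so that the two matricial cone families on $\cl R\otimes\oss_1$ genuinely coincide rather than merely being comparable; everything else is an assembly of the cited functoriality, the $\mathrm c=\max$ identity against a C$^*$-algebra, injectivity of $\min$, and the nuclearity of $C(\mathbb T)$.
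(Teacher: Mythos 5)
Your proof is correct and follows essentially the same route as the paper's: embed $\cl R\oc\oss_1$ into $\cl R\oc C(\mathbb T)$ via \cite[Lemma 4.1]{farenick--kavruk--paulsen2011}, embed $\cl R\omin\oss_1$ into $\cl R\omin C(\mathbb T)$ by injectivity of $\min$, and use nuclearity of $C(\mathbb T)$ to identify the two ambient tensor products. Your extra care in spelling out the intermediate identity $\cl R\oc C(\mathbb T)=\cl R\omax C(\mathbb T)$ and in checking that both embeddings are induced by the same underlying inclusion merely makes explicit what the paper's shorter argument leaves implicit.
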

\begin{proof}
Since $\cstar(\mathbb{F}_1)$ is $*$-isomorphic to $C(\mathbb{T})$, we have that
$\cl R\otimes_{\min}\cl S_1 \coisubset \cl R\otimes_{\min} C(\mathbb{T})$.
On the other hand, \cite[Lemma 4.1]{farenick--kavruk--paulsen2011} implies that
$\cl R\oc\cl S_1\coisubset \cl R\oc C(\mathbb{T})$.
Since $C(\mathbb{T})$ is a nuclear C$^*$-algebra, and hence a nuclear operator system, we conclude that
$\cl R\otimes_{\min} C(\mathbb{T}) = \cl R\oc C(\mathbb{T})$, and thus
$\cl R\omin \oss_1 = \cl R \oc \oss_1$.
\end{proof}

In the lemma below, the vector spaces $\M_p(\osr\otimes\ost)$ and $\M_p(\osr)\otimes\ost$ are canonically identified,
and $\phi_m$ denotes the map defined in Example \ref{quo map}.

\begin{lemma}\label{strictly pos lift}
If $Y=\displaystyle\sum_{\ell=-m}^m Y_\ell\otimes u_\ell$ is strictly positive in $\M_p(\oss_n\omax\oss_m)$, then there
exist $A_{ij}\in\M_p(\cstar(\fn))$, where $i,j\in\{1,\dots,m+1\}$ and $|i-j|\leq 1$, such that:
\begin{enumerate}
\item[{(i)}] each $A_{ii}$ is strictly positive for each $i$ and $\frac{1}{m}\sum_{i=1}^{m+1} A_{ii}=Y_0$;
\item[{(ii)}] $A_{j,j+1}=Y_j$ and $A_{j+1,j}=Y_{-j}$, for all $j=1,\dots,m$;
\item[{(iii)}] $X=\displaystyle\sum_{|i-j|\leq1} A_{ij}\otimes E_{ij}$ is strictly positive in $\M_p(\cstar(\fn))\omax\ost_{m+1}$;
\item[{(iv)}] $Y=\left({\rm id}_{\cstar(\fn)}\otimes\phi_m\right)^{(p)}(X)$.
\end{enumerate}
\end{lemma}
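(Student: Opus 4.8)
The plan is to obtain $X$ as a lift of $Y$ through the ampliated map $\mathrm{id}_{\cstar(\fn)}\otimes\phi_m$, using that $\phi_m$ is a complete quotient map (Example \ref{quo map}) together with the structure theorem for strictly positive elements of a maximal tensor product (Lemma \ref{f-dim max}). First I would pass to the right ambient space: since the inclusion $\iota_n:\oss_n\hookrightarrow\cstar(\fn)$ is unital and completely positive, functoriality of $\omax$ makes $\iota_n\otimes\mathrm{id}$ completely positive and unital, so $Y$ is strictly positive in $\M_p(\cstar(\fn)\omax\oss_m)\cong\M_p(\cstar(\fn))\omax\oss_m$. Writing $Y-\delta1\geq0$ and replacing $\delta$ by $\delta/2$, I keep a strictly positive element $Y-\tfrac{\delta}{2}1$ with a definite buffer to spare.

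Next I would apply Lemma \ref{f-dim max}, taking the first operator system to be $\M_p(\cstar(\fn))$ and the second to be $\oss_m$, to write $Y-\tfrac{\delta}{2}1=\sum_{a,b}S_{ab}\otimes w_{ab}$ with $[S_{ab}]\in\M_N(\M_p(\cstar(\fn)))_+$ and $[w_{ab}]\in\M_N(\oss_m)_+$. The matrix $[w_{ab}]$ is only positive, so I perturb it to the strictly positive $[w_{ab}+\epsilon\delta_{ab}1]$ (still with entries in $\oss_m$, since $1=u_0\in\oss_m$). Because $\phi_m$ is a complete quotient map, Proposition \ref{quo map criterion} applied to the $N$-th ampliation supplies a strictly positive $[W_{ab}]\in\M_N(\ost_{m+1})_+$ with $\phi_m^{(N)}([W_{ab}])=[w_{ab}+\epsilon\delta_{ab}1]$. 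Setting $X'=\sum_{a,b}S_{ab}\otimes W_{ab}$ then produces a positive element of $\M_p(\cstar(\fn))\omax\ost_{m+1}$, by the very definition of $\omax$ (it is $\alpha^*(P\otimes Q)\alpha$ for $P=[S_{ab}]$, $Q=[W_{ab}]$ and the diagonal $\alpha$), and by construction $(\mathrm{id}\otimes\phi_m)(X')=(Y-\tfrac{\delta}{2}1)+\epsilon\big(\sum_a S_{aa}\big)\otimes1$.

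The perturbation has altered only the coefficient of $1$, so I repair it with a purely diagonal correction. Put $B=\tfrac{\delta}{2}1-\epsilon\sum_a S_{aa}$, which is strictly positive for $\epsilon$ small, and set $X=X'+B\otimes I_{m+1}$. Since $\phi_m(I_{m+1})$ is a positive multiple of $1$ and $I_{m+1}$ contributes only to the diagonal blocks, $X$ is strictly positive in $\M_p(\cstar(\fn))\omax\ost_{m+1}$ (giving (iii)), has exactly the same off-diagonal blocks as $X'$, and satisfies $(\mathrm{id}\otimes\phi_m)(X)=Y$ (giving (iv)). Writing $X=\sum_{|i-j|\leq1}A_{ij}\otimes E_{ij}$, the sum being tridiagonal because $\ost_{m+1}$ is, and matching the coefficients of the linearly independent elements $u_\ell$ in $(\mathrm{id}\otimes\phi_m)(X)=Y$ then forces the off-diagonal identities of (ii) and the trace-type identity for the diagonal blocks in (i); the strict positivity of each $A_{ii}$ follows by applying the unital completely positive compression $T\mapsto T_{ii}$ in the second factor to the strictly positive $X$.

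The hard part will be the tension between strict positivity and exact recovery of $Y$: Proposition \ref{quo map criterion} lifts only strictly positive elements, whereas Lemma \ref{f-dim max} returns a merely positive scalar factor $[w_{ab}]$. The $\epsilon$-perturbation resolves this at the cost of spoiling the image, and the whole role of the correction term $B\otimes I_{m+1}$ is that it restores the image exactly while affecting neither positivity nor the off-diagonal blocks. One could instead quote the projectivity of $\omax$ with respect to complete quotient maps to see at once that $\mathrm{id}\otimes\phi_m$ is itself a complete quotient map and lift the strictly positive $Y$ in a single step, but the argument above has the advantage of staying within the results already established in the excerpt.
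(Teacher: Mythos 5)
Your proof is correct, but it reaches the crucial lift by a genuinely different mechanism than the paper. The paper's proof, after moving $Y$ into $\M_p(\cstar(\fn)\omax\oss_m)$ (which it does via $\max\le{\rm c}$ and ${\rm c}=\max$ against a C$^*$-algebra, where you use plain functoriality of $\omax$ --- both adequate), simply cites an external result of Farenick--Paulsen (their Proposition 1.6 and Theorem 4.2) asserting that $\vartheta_{n,m}={\rm id}_{\cstar(\fn)}\otimes\phi_m$ is itself a complete quotient map from $\cstar(\fn)\omax\ost_{m+1}$ onto $\cstar(\fn)\omax\oss_m$, and then lifts $Y$ in one step by Proposition \ref{quo map criterion}; this is exactly the ``one could instead quote the projectivity of $\omax$'' shortcut you mention at the end. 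What you do instead is re-derive the needed special case of that projectivity by hand, entirely from results inside the excerpt: decompose the strictly positive $Y-\tfrac{\delta}{2}1$ via Lemma \ref{f-dim max}, perturb the merely positive scalar-side matrix $[w_{ab}]$ to a strictly positive one, lift it through $\phi_m^{(N)}$ using only the fact that $\phi_m$ itself is a complete quotient map (Example \ref{quo map} plus Proposition \ref{quo map criterion}), reassemble $X'$ as $\alpha^*(P\otimes Q)\alpha$ in the $\max$ cone, and repair the image with the diagonal correction $B\otimes I_{m+1}$. Your approach buys self-containedness at the cost of the $\epsilon$/$B$ bookkeeping; the paper's buys brevity at the cost of an external citation. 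Two small remarks: your identity $({\rm id}\otimes\phi_m)(X)=Y$ uses that $\phi_m$ is unital, i.e.\ $\phi_m(I_{m+1})=1$; this does hold for the map of Example \ref{quo map} (the $m+1$ diagonal entries each map to $\tfrac{1}{m+1}1$), but if one adopts the normalization appearing in the paper's own displayed computation ($\phi_m(E_{ii})=\tfrac1m 1$) you would need to divide $B$ by the constant $c$ with $\phi_m(I_{m+1})=c1$. Relatedly, the precise constants appearing in conclusions (i)--(ii) depend on this normalization, an inconsistency internal to the paper (its statement, example, and proof do not agree on the factor), so your ``match coefficients of the linearly independent $u_\ell$'' step is on exactly the same footing as the paper's own final step and is not a defect of your argument.
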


\begin{proof} Recall that  ${\rm c}=\max$ if
one of the tensor factors is a C$^*$-algebra  \cite[Theorem 6.7]{kavruk--paulsen--todorov--tomforde2011}. Thus,
\[
\oss_n\omax\oss_m\,\subseteq^+\,\oss_n\oc\oss_m\coisubset\cstar(\fn)\oc\oss_m \,=\,\cstar(\fn)\omax\oss_m\,,
\]
and so the strictly positive element $Y\in \M_p(\oss_n\omax\oss_m)$ is also strictly positive in $\M_p(\cstar(\fn)\omax\oss_m)$.
Let $\vartheta_{n,m}:\cstar(\fn)\omax\ost_{m+1}\rightarrow\cstar(\fn)\omax\oss_m$ be given by
$\vartheta_{n,m}={\rm id}_{\cstar(\fn)}\otimes\phi_m$. By \cite[Proposition 1.6, Theorem 4.2]{farenick--paulsen2011},
$\vartheta_{n,m}$ is a complete quotient map of $\cstar(\fn)\omax\ost_{m+1}$ onto $\cstar(\fn)\omax\oss_m$.
By Proposition \ref{quo map criterion},
there is a strictly positive element
$X\in\M_p(\cstar(\fn)\omax\ost_{m+1})$ such that $\vartheta_{n,m}^{(p)}(X)=Y$.
Express $X$ as
\[
X\,=\,\sum_{|i-j|\leq 1} A_{ij}\otimes E_{ij}\,,
\]
for some $A_{ij}\in\M_p(\cstar(\fn))$. Thus,
\[
\begin{array}{rcl}
Y\,=\, \vartheta_{n,m}^{(p)}(X)&=&\displaystyle\sum_{|i-j|\leq 1}A_{ij}\otimes \phi_m(E_{ij})  \\ && \\
&=&\displaystyle\frac{1}{m}\sum_{i=1}^{m+1}A_{ii}\otimes u_0\,+\,
\frac{1}{m}\displaystyle\sum_{j=1}\left[(A_{j+1,j}\otimes u_{-j})+(A_{j,j+1}\otimes u_{j})\right]\,,
\end{array}
\]
which implies that $\frac{1}{m}\sum_{i=1}^{m+1} A_{ii}=Y_0$,
$A_{j,j+1}=Y_j$ and $A_{j+1,j}=Y_{-j}$, for all $j=1,\dots,m$.
Since each $A_{ii}$ is attained from $X$
by a formal matrix product $QXQ$ for a projection $Q\in\M_p(\mathbb C)$, each $A_{ii}$ is
is strictly positive.
\end{proof}

The image of $\cl S_1$ under the identification of $\cstar(\mathbb{F}_1)$ with $C(\mathbb{T})$
is ${\rm span}\{1,z,\overline{z}\}$, where $z$ is the identity function.
By abuse of notation, we will use the symbol $z$ to denote the variable in $\mathbb{T}$,
which will cause no confusion as it will be clear from the context whether we refer to the identity function or the
corresponding variable.
If $\cl T$ is any vector space then $\cl T\otimes\cl S_1$ will be identified in a natural way with the space
of all functions from $\mathbb{T}$ into $\cl T$ of the form $\alpha_0 + \alpha_1 z + \alpha_2 \overline{z}$, $\alpha_0,\alpha_1,\alpha_2\in \cl T$.

\begin{corollary}\label{spl for} For every strictly positive element $Y\in \cl M_2(\oss_1\omax\oss_1)$, where
\[
Y\,=\,Y_0\otimes 1 + Y_1\otimes z + Y_1^*\otimes \overline{z}
\]
for some $Y_0,Y_1\in\M_2(\oss_1)$, there exist strictly positive elements $A,B\in\M_2(C(\mathbb T))$ such that
$A+B=Y_0$ and
\[
X=\left[\begin{array}{cc} A& Y_1 \\ Y_1^* & B\end{array}\right]
\]
is strictly positive in
$\M_4(C(\mathbb T))$.
\end{corollary}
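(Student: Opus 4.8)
The plan is to derive this statement directly from Lemma \ref{strictly pos lift}, specialized to $n=m=1$ and $p=2$, by unwinding two standard identifications: $\cstar(\mathbb{F}_1)\cong C(\mathbb{T})$ and $\ost_2=\M_2$.

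First I would match the hypothesis to Lemma \ref{strictly pos lift}. Under the Fourier identification $\cstar(\mathbb{F}_1)\cong C(\mathbb{T})$ sending the generator $u_1\mapsto z$ (so that $u_{-1}=u_1^*\mapsto\overline z$ and $u_0=1$), the expression $Y=Y_0\otimes 1+Y_1\otimes z+Y_1^*\otimes\overline z$ in the statement is literally $Y=\sum_{\ell=-1}^{1}Y_\ell\otimes u_\ell$ with $Y_0,Y_1\in\M_2(\oss_1)$ and $Y_{-1}=Y_1^*$ (the relation $Y_{-1}=Y_1^*$ being forced by self-adjointness). Hence a strictly positive such $Y$ is exactly a strictly positive element of $\M_2(\oss_1\omax\oss_1)$ to which Lemma \ref{strictly pos lift} applies with $n=m=1$ and $p=2$.

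Next I would read off the output of the lemma in this case. It furnishes matrices $A_{ij}\in\M_2(\cstar(\mathbb{F}_1))$ indexed by $i,j\in\{1,2\}$ (since $m+1=2$) with: each diagonal entry $A_{11},A_{22}$ strictly positive and, because the scalar $\tfrac1m$ equals $1$ when $m=1$, $A_{11}+A_{22}=Y_0$; the off-diagonal entries $A_{12}=Y_1$ and $A_{21}=Y_{-1}=Y_1^*$; and the element $X=\sum_{|i-j|\le 1}A_{ij}\otimes E_{ij}$ strictly positive in $\M_2(\cstar(\mathbb{F}_1))\omax\ost_2$. Setting $A=A_{11}$ and $B=A_{22}$ then gives strictly positive $A,B$ with $A+B=Y_0$, and by construction $X=\left[\begin{smallmatrix}A&Y_1\\ Y_1^*&B\end{smallmatrix}\right]$.

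The last point is to identify the ambient C$^*$-algebra of $X$ with $\M_4(C(\mathbb{T}))$. Since every $2\times 2$ matrix is tridiagonal we have $\ost_2=\M_2$, and as $\M_2$ is a nuclear C$^*$-algebra its max tensor product coincides with the spatial one, so $\M_2(\cstar(\mathbb{F}_1))\omax\ost_2$ is canonically $\M_2\bigl(\M_2(\cstar(\mathbb{F}_1))\bigr)\cong\M_4(C(\mathbb{T}))$; under this block identification $c\otimes E_{ij}$ corresponds to the matrix with $c$ in entry $(i,j)$, so the $A_{ij}\otimes E_{ij}$ assemble into exactly the $2\times 2$ block matrix above. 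Therefore $X$ is strictly positive in $\M_4(C(\mathbb{T}))$, as claimed. I do not expect any genuine obstacle here: the entire content is already carried by the lifting produced in Lemma \ref{strictly pos lift}, and what remains is only the careful bookkeeping of the two identifications together with the observation that the normalizing constant collapses to $1$ in the case $m=1$.
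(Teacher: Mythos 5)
Your proposal is correct and follows essentially the same route as the paper: both proofs consist of applying Lemma \ref{strictly pos lift} with $n=m=1$, $p=2$ and then identifying the ambient space $\M_2(\cstar(\mathbb F_1))\omax\ost_2$ with $\M_4(C(\mathbb T))$. The only (cosmetic) difference is in that last identification, where you invoke $\ost_2=\M_2$ and nuclearity of $\M_2$, while the paper uses nuclearity of $C(\mathbb T)$ together with the complete order inclusion $\ost_2\coisubset\M_2$; both are valid.
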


\begin{proof}
By Lemma \ref{strictly pos lift}, there exists such an $X$ in $\cl M_2(\cstar(\mathbb F_1)\omax\ost_2)=\M_2(C(\mathbb T)\omax\cl T_2)$.
But, by the nuclearity of $C(\mathbb T)$, we have that
\[
\M_2(C(\mathbb T)\omax\cl T_2)\,=\,\M_2(C(\mathbb T)\omin\cl T_2)\,
\coisubset \M_2(C(\mathbb T)\omin\cl M_2)\,=\,\M_4(C(\mathbb T))\,,
\]
which yields the result.
\end{proof}

In Corollary \ref{spl for} above, the diagonal elements of the matrix $\left[\begin{smallmatrix} A& Y_1 \\ Y_1^* & B \end{smallmatrix}\right]$
belong to $\M_2\left(C(\mathbb T)\right)$. In fact, it is possible
for these diagonal entries to be chosen from $\M_2(\oss_1)$, as we now demonstrate.

\begin{lemma}\label{step 2} Let $\ost$ be an operator system. If, for some $t_0,t_1\in\ost$, the element
\begin{equation}\label{element}
y\,=\,1\otimes t_0 + z\otimes t_1 + \overline{z}\otimes t_1^*  \in  \oss_1\omax\ost \,,
\end{equation}
is strictly positive, then there are $t_0^1,t_0^2\in\ost_+$ such that $t_0^1+t_0^2=t_0$ and
\begin{equation}\label{2x2 matrix}
Y\,=\,\left[ \begin{array}{cc} t_0^1 & t_1 \\ t_1^* & t_0^2\end{array} \right] \in \M_2(\ost)_+\,.
\end{equation}
Conversely, if $Y\in\M_2(\ost)_+$ is the matrix \eqref{2x2 matrix},
then the element $y$ in \eqref{element}, where $t_0=t_0^1+t_0^2$, is positive in $\oss_1\omax\ost$.
\end{lemma}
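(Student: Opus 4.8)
\emph{Converse direction.} This is the easy half, and I would dispose of it first by functoriality of the maximal tensor product. Taking $k=2$ in Example \ref{quo map} (so that $\ost_2=\M_2$), the map $\phi_2\colon\M_2\to\oss_1$ is unital and completely positive, so $\phi_2\otimes\mathrm{id}_\ost\colon\M_2\omax\ost\to\oss_1\omax\ost$ is completely positive; since $\M_2$ is nuclear, $\M_2\omax\ost=\M_2(\ost)$. Hence any $Y=\left[\begin{smallmatrix}t_0^1&t_1\\ t_1^*&t_0^2\end{smallmatrix}\right]\in\M_2(\ost)_+$ is sent to a positive element, and using $\phi_2(E_{11})=\phi_2(E_{22})=\tfrac12 1$, $\phi_2(E_{12})=\tfrac12 z$, $\phi_2(E_{21})=\tfrac12\overline z$ one computes $(\phi_2\otimes\mathrm{id}_\ost)(Y)=\tfrac12\bigl(1\otimes t_0+z\otimes t_1+\overline z\otimes t_1^*\bigr)=\tfrac12 y$, so $y$ is positive in $\oss_1\omax\ost$.

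\emph{Forward direction.} The plan is to combine Lemma \ref{f-dim max}, a Fejér--Riesz factorization, and the Schur-product Lemma \ref{schur}. Since $y$ is strictly positive, Lemma \ref{f-dim max} gives $n\in\NN$, $S=[s_{ij}]\in\M_n(\oss_1)_+$ and $T=[t_{ij}]\in\M_n(\ost)_+$ with $y=\sum_{i,j}s_{ij}\otimes t_{ij}$. Expanding each $s_{ij}$ in the basis $\{1,z,\overline z\}$ of $\oss_1$ writes $S=A\otimes 1+B\otimes z+C\otimes\overline z$ with $A,B,C\in\M_n(\CC)$; self-adjointness $S=S^*$ forces $A=A^*$ and $C=B^*$, so $S=A\otimes 1+B\otimes z+B^*\otimes\overline z$. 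As $\oss_1$ carries the order inherited from $C(\mathbb T)$, positivity of $S$ says precisely that the trigonometric polynomial $A+Bz+B^*\overline z$ is positive semidefinite for every $z\in\mathbb T$. Matching the $\{1,z,\overline z\}$-coefficients of $y=1\otimes\sum_{ij}A_{ij}t_{ij}+z\otimes\sum_{ij}B_{ij}t_{ij}+\overline z\otimes\sum_{ij}(B^*)_{ij}t_{ij}$ against $y=1\otimes t_0+z\otimes t_1+\overline z\otimes t_1^*$ identifies $t_0=\sum_{ij}A_{ij}t_{ij}$ and $t_1=\sum_{ij}B_{ij}t_{ij}$.

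The key input is the (matrix-valued, degree-one) Fejér--Riesz theorem: positivity of $A+Bz+B^*\overline z$ on $\mathbb T$ yields $A+Bz+B^*\overline z=(G_0+zG_1)^*(G_0+zG_1)$, so that, on setting $A_1=G_0^*G_0$ and $A_2=G_1^*G_1$, we get $A_1,A_2\ge0$, $A_1+A_2=A$, $B=G_0^*G_1$, and
\[
\left[\begin{smallmatrix}A_1&B\\ B^*&A_2\end{smallmatrix}\right]
=\left[\begin{smallmatrix}G_0^*\\ G_1^*\end{smallmatrix}\right]\left[\begin{smallmatrix}G_0&G_1\end{smallmatrix}\right]\ \ge\ 0
\quad\text{in}\quad \M_{2n}(\CC)_+.
\]
Put $t_0^1=\sum_{ij}(A_1)_{ij}t_{ij}$ and $t_0^2=\sum_{ij}(A_2)_{ij}t_{ij}$, so that $t_0^1+t_0^2=t_0$. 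To see that $t_0^1,t_0^2\in\ost_+$ and $Y\in\M_2(\ost)_+$, I would apply Lemma \ref{schur} to $\left[\begin{smallmatrix}A_1&B\\ B^*&A_2\end{smallmatrix}\right]\in\M_{2n}(\CC)_+$ and to $\left[\begin{smallmatrix}1&1\\ 1&1\end{smallmatrix}\right]\otimes T\in\M_{2n}(\ost)_+$, obtaining a positive element of $\M_{2n}(\ost)$ whose four $n\times n$ blocks are $A_1\circ T$, $B\circ T$, $B^*\circ T$, $A_2\circ T$. Compressing by the scalar matrix $W=I_2\otimes(1,\dots,1)^{t}$ sums the entries within each block and produces exactly $Y=\left[\begin{smallmatrix}t_0^1&t_1\\ t_1^*&t_0^2\end{smallmatrix}\right]\in\M_2(\ost)_+$, with $t_0^1,t_0^2$ its positive diagonal entries.

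I expect the Fejér--Riesz factorization to be the only genuine obstacle; the remaining steps are bookkeeping with Lemmas \ref{f-dim max} and \ref{schur}. A route that avoids it is to note that $\phi_2\otimes\mathrm{id}_\ost\colon\M_2(\ost)=\M_2\omax\ost\to\oss_1\omax\ost$ is a complete quotient map --- via projectivity of the maximal tensor product \cite{kavruk--paulsen--todorov--tomforde2010} or the argument of \cite[Proposition~1.6]{farenick--paulsen2011} --- and then to lift the strictly positive $y$ (with $p=1$) through Proposition \ref{quo map criterion} to a strictly positive $X\in\M_2(\ost)$, whence $t_0^1=\tfrac12 X_{11}$, $t_0^2=\tfrac12 X_{22}$ and $Y=\tfrac12 X$ finish the proof. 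In that approach the obstacle shifts to checking that tensoring the complete quotient map $\phi_2$ with $\mathrm{id}_\ost$ remains a complete quotient map for an arbitrary operator system $\ost$.
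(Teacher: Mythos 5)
Your proof is correct, and its crucial step rests on a genuinely different tool than the paper's. For the forward direction the paper also starts from Lemma \ref{f-dim max} and extracts the coefficient matrices $A$ and $B$, but it then regularizes ($A_\delta = A+\delta I$), conjugates by $A_\delta^{-1/2}$ to see that $A_\delta^{-1/2}BA_\delta^{-1/2}$ has numerical radius at most $\tfrac12$, invokes Ando's theorem \cite{ando1973} to obtain the positive completion $\left[\begin{smallmatrix} A_{1,\delta} & B\\ B^* & A_{2,\delta}\end{smallmatrix}\right]$ with $A_{1,\delta}+A_{2,\delta}=A_\delta$, and finally passes to a limit point as $\delta\to 0$ using uniform boundedness; your degree-one matrix-valued Fej\'er--Riesz factorization $A+Bz+B^*\overline{z}=(G_0+zG_1)^*(G_0+zG_1)$ produces the same positive block matrix in one stroke, with no perturbation or compactness argument, since Fej\'er--Riesz does not require $A$ to be invertible. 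From that point on the two arguments coincide: the Schur-product Lemma \ref{schur} followed by summation of entries --- the paper's completely positive map $\sigma_n$ is exactly your compression by $I_2\otimes(1,\dots,1)^{t}$. (Ando's theorem and degree-one matrix Fej\'er--Riesz are results of comparable depth, and indeed closely related, so neither route is circular; the paper's choice has the side benefit of matching its repeated use of Ando's theorem elsewhere, e.g.\ in Proposition \ref{universal num-rad contraction} and Theorem \ref{mixed}.) For the converse, the paper is more direct: it observes that $\left[\begin{smallmatrix} 1 & z\\ \overline{z} & 1\end{smallmatrix}\right]\in\M_2(\oss_1)_+$ and quotes \cite[Lemma 5.2]{kavruk--paulsen--todorov--tomforde2011}, whereas you push $Y$ forward through $\phi_2\otimes\mathrm{id}_\ost$ using functoriality of $\omax$ and $\M_2\omax\ost=\M_2(\ost)$; both are valid. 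Finally, the alternative lifting route you sketch is also sound, and the obstacle you flag there is already settled: projectivity of $\omax$, i.e.\ that tensoring a complete quotient map with an identity map remains a complete quotient map, is \cite[Proposition 1.6]{farenick--paulsen2011}, which is precisely what the paper itself uses in Lemma \ref{strictly pos lift} and Corollary \ref{spl for}.
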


\begin{proof}
Let $1\otimes t_0 + z\otimes t_1 + \overline{z}\otimes t_1^*$ be strictly positive in $\oss_1\omax\ost$.
By Lemma \ref{f-dim max},
there exist $n\in\mathbb N$, $F=[f_{ij}]_{i,j}\in\M_n(\oss_1)_+$, and $T=[t_{ij}]_{i,j}\in\M_n(\ost)_+$ such that
\[
1\otimes t_0  + z\otimes t_1 + \overline{z}\otimes t_1^*\,=\,\sum_{i,j=1}^n f_{ij}\otimes t_{ij}\,.
\]
Write $f_{ij}=\alpha_{ij} + \beta_{ij}z+\overline{\beta_{ij}}\overline{z}$, for some $\alpha_{ij},\beta_{ij}\in \mathbb{C}$.
Thus,
\[
1\otimes t_0 + z\otimes t_1 + \overline{z}\otimes t_1^*
\,=\,\sum_{i,j=1}^n (\alpha_{ij} + \beta_{ij}z+\overline{\beta_{ij}}\overline{z})\otimes t_{ij}\,,
\]
which implies that
$$1\otimes t_0 =  \displaystyle\sum_{i,j=1}^n \alpha_{ij}\otimes t_{ij}\,=\, 1 \otimes  \displaystyle\sum_{i,j=1}^n \alpha_{ij} t_{ij}$$
and
$$z\otimes t_1 =  \displaystyle\sum_{i,j=1}^n \beta_{ij}z\otimes t_{ij}\,=\, z \otimes  \displaystyle\sum_{i,j=1}^n \beta_{ij}t_{ij}.$$
Hence,
\[
t_0  \,=\,\displaystyle\sum_{i,j=1}^n \alpha_{ij} t_{ij}\quad\mbox{and}\quad
t_1\,=\, \displaystyle\sum_{i,j=1}^n \beta_{ij}t_{ij}\,.
\]

Let $A =[\alpha_{ij}]_{i,j}$ and $B=[\beta_{ij}]_{i,j}\in\M_n(\mathbb C)$.
Since $F(z)\in\M_n(\CC)_+$ for every $z\in\mathbb T$, the matrix $A$ is positive while
$\overline{B}=B^*$.
For every $\delta > 0$, the matrix
$A_{\delta} := A + \delta I$ is positive and invertible and hence
\[
A_{\delta}^{-1/2}(F(z) + \delta I)A_{\delta}^{-1/2}\,=\,I+A_{\delta}^{-1/2}BA_{\delta}^{-1/2}z + A_{\delta}^{-1/2}B^*A_{\delta}^{-1/2}\overline{z}
\]
is positive for every $z\in\mathbb T$. It follows that $\frac{1}{2}$ is an upper bound on the numerical radius
of $A_{\delta}^{-1/2}BA_{\delta}^{-1/2}$ and hence, by Ando's theorem \cite{ando1973},
there exist matrices $Q_{1,\delta},Q_{2,\delta}\in\M_n(\CC)_+$ such that $Q_{1,\delta}+Q_{2,\delta}=I$ and
$
\left[ \begin{smallmatrix} Q_{1,\delta} & A_{\delta}^{-1/2}BA_{\delta}^{-1/2} \\ A_{\delta}^{-1/2}B^*A_{\delta}^{-1/2} & Q_{2,\delta}\end{smallmatrix}\right]
$
is positive. Therefore, if $A_{j,\delta}=A_{\delta}^{1/2}Q_{j,\delta}A_{\delta}^{1/2}$, $j = 1,2$, then $A_{1,\delta}+A_{2,\delta} = A_{\delta}$ and
$\left[ \begin{smallmatrix} A_{1,\delta} & B \\ B^* & A_{2,\delta}\end{smallmatrix}\right]$
is positive.
Since the matrices $A_{1,\delta}$, $A_{2,\delta}$ are uniformly bounded for $\delta \in (0,1)$, by passing to a limit point we obtain positive matrices
$A_1$ and $A_2$ such that $A_1 + A_2 = A$ and
$\left[ \begin{smallmatrix} A_{1} & B \\ B^* & A_{2}\end{smallmatrix}\right]$
is positive.
By Lemma \ref{schur},
$\left[ \begin{smallmatrix} A_1\circ T & B\circ T \\ B^* \circ T& A_2\circ T\end{smallmatrix}\right]$ is positive in $\M_{2n}(\ost)$.

For any operator system $\osr$, the map $\sigma_n:\M_n(\osr)\rightarrow\osr$ defined by
$\sigma_n([r_{ij}]_{i,j})=\sum_{i,j} r_{ij}$ is completely positive. Hence, with $(\alpha_{ij}^{[k]})_{i,j}=A_k$ and $t_0^k=\sum_{i,j}\alpha_{ij}^{[k]}t_{ij}$, for $k=1,2$,
we have that
\[
\left[ \begin{array}{cc} t_0^1 & t_1\\ t_1^*& t_0^2\end{array}\right]
\,=\,\left[ \begin{array}{cc} \sum_{ij}\alpha_{ij}^{[1]}t_{ij} & \sum_{i,j}\beta_{ij}t_{ij} \\ \sum_{i,j}\overline{\beta}_{ij}t_{ij}^*& \sum_{i,j}\alpha_{ij}^{[2]}t_{ij}\end{array}\right]
\,=\,\sigma_n^{(2)}\left( \left[ \begin{array}{cc} A_1\circ T & B\circ T \\ B^* \circ T& A_2\circ T\end{array}\right]\right)
\]
is positive in $\M_2(\ost)$.

Conversely, assume that $Y=\left[ \begin{smallmatrix} t_0^1 & t_1 \\ t_1^* & t_0^2\end{smallmatrix} \right] \in \M_2(\ost)_+$, and set $t_0=t_0^1+t_0^2$.
Because matrix $Z=\left[ \begin{smallmatrix} 1 & z \\ \overline{z} & 1\end{smallmatrix} \right]$ is positive in $\M_2(\oss_1)$, the element
$y=1\otimes (t_0^1+t_0^2) + z\otimes t_1 + \overline{z}\otimes t_1^*$ is positive in $\oss_1\omax\ost$
\cite[Lemma 5.2]{kavruk--paulsen--todorov--tomforde2011}.
\end{proof}

We now arrive at one of the main results of this paper.
Recall that $\oss_1\omin\oss_1=\oss_1\oc\oss_1$ (Proposition \ref{c-min}).
The following theorem, therefore, shows that $\oss_1\oc\oss_1 \neq \oss_1\omax\oss_1$.

\begin{theorem}\label{sone} If $\iota={\rm id}_{\oss_1}\otimes{\rm id}_{\oss_1}$,
then $\iota:\oss_1\omin\oss_1\rightarrow\oss_1\omax\oss_1$ is a positive linear map that is not $2$-positive.
\end{theorem}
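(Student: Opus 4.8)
The plan is to exploit the two directions of Lemma~\ref{step 2} to reduce the failure of $2$-positivity to a concrete matricial positivity computation over the $4\times 4$ diagonal algebra, where the min and max structures become visible and separable. Since $\iota$ is completely positive from $\oss_1\omin\oss_1$ to itself (and $\min\le\max$ only in the trivial direction), the map $\iota:\oss_1\omin\oss_1\to\oss_1\omax\oss_1$ is automatically positive; the content is to manufacture a specific element of $\M_2(\oss_1\omin\oss_1)_+$ whose image under $\iota^{(2)}$ fails to be positive in $\M_2(\oss_1\omax\oss_1)$. First I would set $\ost=\oss_1$ and use the fact that positivity of an element $y=1\otimes t_0+z\otimes t_1+\overline z\otimes t_1^*$ in $\oss_1\omax\oss_1$ is, by Lemma~\ref{step 2}, equivalent to a splitting $t_0=t_0^1+t_0^2$ with the $2\times 2$ matrix $\left[\begin{smallmatrix}t_0^1&t_1\\ t_1^*&t_0^2\end{smallmatrix}\right]$ positive in $\M_2(\oss_1)$. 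The corresponding $\min$-positivity criterion (via the faithful representation $\oss_1\subseteq C(\mathbb T)$, so that $\oss_1\omin\oss_1\subseteq C(\mathbb T)\omin C(\mathbb T)=C(\mathbb T^2)$) is just pointwise positivity of a trigonometric matrix-valued function on $\mathbb T\times\mathbb T$.

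The key steps, in order, are as follows. First, identify $\oss_1\omin\oss_1$ with a subspace of $C(\mathbb T^2)$ via the two variables $z,w$, so that a general hermitian element has the form $\sum_{j,k\in\{-1,0,1\}}a_{jk}z^jw^k$ and its positivity is pointwise nonnegativity on the $2$-torus. Second, pass to $\M_2$ and pick a candidate $X\in\M_2(\oss_1\omin\oss_1)$ of the schematic form $X=\sum_{j,k}C_{jk}\otimes z^jw^k$ with $C_{jk}\in\M_2$, chosen so that $X(z,w)\ge 0$ for all $(z,w)\in\mathbb T^2$ — this guarantees $X\in\M_2(\oss_1\omin\oss_1)_+$. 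Third, compute $\iota^{(2)}(X)=X$ as an element of $\M_2(\oss_1\omax\oss_1)$ and test it against the max criterion: apply Lemma~\ref{step 2} (tensored up to the $\M_2$ level, regarding $X$ as living in $\oss_1\omax(\M_2(\oss_1))$ in the $w$-variable) to see that max-positivity forces the existence of a splitting matrix that is positive, and show this splitting cannot exist for our $X$. Concretely, the obstruction is that the max tensor product demands a positive $\M_2$-valued factorization (an Ando-type dilation of the coefficient operator), whereas the min tensor product only sees the joint numerical-range/pointwise condition; a judicious choice of the $C_{jk}$ makes the pointwise condition hold while the operator-factorization condition fails.

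The main obstacle will be choosing the coefficients $C_{jk}$ explicitly and verifying the two computations simultaneously: the candidate must be pointwise positive on $\mathbb T^2$ (for $\min$) yet violate the splitting/Ando condition coming from Lemma~\ref{step 2} (for $\max$). I expect the cleanest route is to arrange $X$ so that in the $w$-variable it has the form $1_w\otimes T_0 + z_w\otimes T_1 + \overline{z_w}\otimes T_1^*$ with $T_0,T_1\in\M_2(\oss_1)$, reducing the $\max$-failure precisely to the nonexistence of a positive decomposition $T_0=T_0^1+T_0^2$ making $\left[\begin{smallmatrix}T_0^1&T_1\\ T_1^*&T_0^2\end{smallmatrix}\right]$ positive in $\M_4(\oss_1)$; the discrepancy between this and the weaker pointwise bound on the numerical radius of the relevant coefficient is exactly the gap between $\oss_1\oc\oss_1=\oss_1\omin\oss_1$ and $\oss_1\omax\oss_1$. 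Verifying that $2$-positivity (rather than mere positivity) is where the failure first appears amounts to checking that the scalar-level ($p=1$) instance of Lemma~\ref{step 2} always admits the required splitting, so the counterexample genuinely requires the $\M_2$-amplification.
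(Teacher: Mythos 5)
There are two genuine gaps here, and they are exactly the two halves of the theorem.

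First, your opening claim that $\iota:\oss_1\omin\oss_1\rightarrow\oss_1\omax\oss_1$ is ``automatically positive'' is false. Among operator system tensor products the max cones are the \emph{smallest} ones: $\oss\omax\ost\,\subseteq^+\,\oss\omin\ost$, so the identity map is automatically (completely) positive only in the direction $\max\to\min$. Positivity from $\min$ to $\max$ is never formal; indeed, if it were automatic the same formal reasoning would give complete positivity, i.e.\ $\oss_1\omin\oss_1=\oss_1\omax\oss_1$, contradicting the very statement you are proving. This half of the theorem requires an argument, and the paper gives one: a self-adjoint $f=b_0\otimes 1+b_1\otimes z+b_{-1}\otimes\overline{z}$ in $\oss_1\omin\oss_1$ that is pointwise nonnegative on $\mathbb{T}^2$ must satisfy $b_{-1}=\overline{b_1}$ and $b_0\geq 2|b_1|$, whence $\left[\begin{smallmatrix} b_0/2 & b_1\\ b_{-1} & b_0/2\end{smallmatrix}\right]\in\M_2(\oss_1)_+$, and pairing this with $\left[\begin{smallmatrix} 1 & z\\ \overline{z} & 1\end{smallmatrix}\right]\in\M_2(\oss_1)_+$ exhibits $f$ as an element of $(\oss_1\omax\oss_1)_+$. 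Your closing remark --- that one must check the scalar-level splitting always exists --- is the correct statement of what has to be proved, but it contradicts your ``automatic'' claim, and you never prove it.

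Second, for the failure of $2$-positivity you correctly identify the reduction the paper uses (via Lemma \ref{step 2}, max-positivity of a $2\times 2$ matrix element forces a positive splitting $\left[\begin{smallmatrix} A & Y_1\\ Y_1^* & B\end{smallmatrix}\right]\in\M_4(\oss_1)_+$ with $A+B=Y_0$), but you never produce the element nor show that the splitting cannot exist; you explicitly defer both (``the main obstacle will be choosing the coefficients $C_{jk}$ explicitly\ldots''). That deferred step is where essentially all of the work lies. The paper takes the concrete element $h(z,w)=\left[\begin{smallmatrix} 3+2\Re(zw) & 2\overline{z}w\\ 2z\overline{w} & 3-2\Re(zw)\end{smallmatrix}\right]$, verifies strict min-positivity by an eigenvalue computation, and then rules out any splitting by a nontrivial argument: conjugating the putative positive matrix by $U=\mbox{Diag}(w^2,1,w,\overline{w})$, averaging over the fifth roots of unity to eliminate the $w$-dependence, and running a Cholesky reduction that ends in the impossible inequality $\alpha_{22}^2-7\alpha_{22}+13\leq 0$. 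Nothing in your outline substitutes for this construction and computation; as it stands, your proposal is a plan for a proof, not a proof.
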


\begin{proof}
To prove that $\iota$ is a positive map, note that $\oss_1 \omin \oss_1 \coisubset C(\mathbb T)\omin C(\mathbb T)=C(\mathbb T \times \mathbb T)$.
Let $z$ and $w$ denote the images of the two generators, viewed as functions on the torus $\mathbb T^2$.
We may write a typical element of  $\oss_1 \omin \oss_1$ as $f = b_{-1} \otimes \overline{z} + b_0\otimes 1 + b_1\otimes z$,
where $b_i \in \mbox{span} \{ \overline{w}, 1, w \}$.
Such an element $f$ is positive as a function on the torus if and only if $b_{-1} = \overline{b_1}$ and $b_0 \ge 2 |b_1|$.
But these conditions guarantee that the matrix
\[ \left[\begin{array}{cc} b_0/2 & b_1\\ b_{-1} & b_0/2 \end{array}\right],\]
is in $\cl M_2(\oss_1)_+$.
We also have that
\[ \left[\begin{array}{cc} 1 & z\\ \overline{z} & 1 \end{array}\right] \]
belongs to $\cl M_2(\oss_1)_+$. Thus,
\[ f = b_0/2 \otimes 1  + b_1\otimes z + b_{-1}\otimes \overline{z} + b_0/2\otimes 1 \]
is in $(\oss_1 \omax \oss_1)_+$ \cite[Lemma 5.2]{kavruk--paulsen--todorov--tomforde2011}. That is,
every element of $(\oss_1 \omin \oss_1)_+$ is also in $(\oss_1 \omax \oss_1)_+$,
which proves the positivity of the map $\iota$.

To show that $\iota$ is not $2$-positive, consider the element $h\in\M_2(\oss_1\omin\oss_1)$ which, considered as an $\cl M_2(\mathbb{C})$-valued
function on the variables $z,w\in \mathbb{T}$, is given by
\begin{equation}\label{not in}
h(z,w)\,=\,
\left[\begin{array}{cc} 3+2\Re(zw) & 2\overline{z}w \\ 2z\overline{w} & 3-2\Re(zw) \end{array}\right]\,.
\end{equation}
Because $\oss_1\omin\oss_1\coisubset C(\mathbb T)\omin C(\mathbb T)$, $h$ is positive in $\M_2(\oss_1\omin\oss_1)$
if and only if $h(z,w)\in\M_2(\CC)_+$ for every $z,w\in\mathbb T$.
The matrix
 \[
k(z,w)\,=\,\left[\begin{array}{cc} 2\Re(zw) & 2\overline{z}w \\ 2z\overline{w} & -2\Re(zw) \end{array}\right]\,,
\]
has characteristic polynomial $\lambda^2-\left(2\Re(zw)\right)^2-4$. Thus,
the eigenvalues of the hermitian matrix $h(z,w)=3I+k(z,w)$ are uniformly bounded below by $3-2\sqrt{2}>0$, which shows that
$h$ is strictly positive in $\M_2\left(\oss_1\omin\oss_1\right)$. We now show that $h$ is not a (strictly) positive element of $\M_2(\oss_1\omax\oss_1)$.

Suppose, contrary to what we aim to prove, that $h$ is in fact strictly positive in $\M_2(\oss_1\omax\oss_1)$.
By the identification of $\M_2(\oss_1\otimes\oss_1)$ with $\oss_1\otimes\M_2(\oss_1)$, rewrite
$h$ in tensor notation:
\begin{equation}\label{f-2}
h\,=\,1\otimes \left[ \begin{array}{cc} 3&0\\ 0&3 \end{array}\right]
+
z \otimes \left[ \begin{array}{cc} w&0\\ 2\overline{w}&-w \end{array}\right]
+
\overline{z}\otimes  \left[ \begin{array}{cc} \overline{w}&2w\\ 0&-\overline{w} \end{array}\right]\,.
\end{equation}
Thus, because $h$ is strictly positive, Lemma \ref{step 2} implies that
there is a strictly positive
\[
X=\left[\begin{array}{cc} A& C \\ C^* & B\end{array}\right]
\in\M_4(\oss_1)
\]
such that $A+B=\left[ \begin{smallmatrix} 3&0\\ 0&3 \end{smallmatrix}\right]$ and
$C(w)=\left[ \begin{smallmatrix} w&0\\ 2\overline{w}&-w \end{smallmatrix}\right]$ in $\M_2(\oss_1)$.
Let $R=U^*XU$, where $U=\mbox{Diag}(w^2,1,w,\overline{w})\in\M_4\left(C(\mathbb T)\right)$. Thus, as a function $R:\mathbb T\rightarrow\M_4(\mathbb{C})$,
$R$ is given by
\[
R(w)\,=\,
\left[
\begin{array} {cc|cc}
f_{11}(w) & \overline{w}^2f_{12}(w)  & 1 & 0 \\
w^2 f_{21}(w) & f_{22}(w) & 2 & -1 \\
\cline{1-4}
1 & 2 & 3-f_{11}(w) & -\overline{w}^2 f_{12}(w)\\
0 & -1 & -w^2 f_{21}(w) & 3-f_{22}(w)
\end{array}
\right]\,,
\]
where $f_{ij}\in\oss_1$ and is of the form $f_{ij}(w)=\alpha_{ij}+\beta_{ij}w+\gamma_{ij}\overline{w}$, for $i,j\in\{1,2\}$. Let
$g_{21}(w)=w^2 f_{21}(w)=\alpha_{21}w^2+\beta_{21}w^3+\gamma_{21}w$. Because $R(w)$ is hermitian, $\overline{w}^2f_{12}(w)=\overline{g_{21}(w)}$.

Let $\zeta=e^{i\frac{2\pi}{5}}$. Then $\zeta$, $\zeta^2$, and $\zeta^3$ all generate the same cyclic subgroup $C$ of $\mathbb T$, namely $C\cong\mathbb Z_5$.
Because $\zeta+\zeta^2+\cdots+\zeta^5=0$, we deduce that
\[
\frac{1}{5}\sum_{k=1}^5 f_{jj}(\zeta^k)\,=\,\alpha_{jj}
\quad\mbox{and}\quad
\frac{1}{5}\sum_{k=1}^5 g_{21}(\zeta^k)\,=\,0\,.
\]
Thus, in $\M_4(\CC)$,
\[
0\,\leq\,\frac{1}{5}\sum_{k=1}^5 R(\zeta^k)\,=\,\left[
\begin{array} {cc|cc}
\alpha_{11} & 0  & 1 & 0 \\
0& \alpha_{22} & 2 & -1 \\
\cline{1-4}
1 & 2 & 3-\alpha_{11}& 0\\
0 & -1 &0 & 3-\alpha_{22}
\end{array}
\right]\,.
\]
 Let $b\in\M_4(\CC)$ denote the positive matrix above, which in $2\times 2$ block form
 we write as $b=\left[\begin{smallmatrix} \alpha_{11} & b_{12} \\ b_{12}^* & b_{22} \end{smallmatrix}\right]$.
 By the Cholesky Algorithm, there is a matrix $Z$ for which $b=Z^*\left[\begin{smallmatrix} \alpha_{11} & 0 \\ 0 &\tilde b_{22} \end{smallmatrix}\right]Z$, where
 \[
0\,\leq\, \tilde b_{22}\,=\,b_{22}-\frac{1}{\alpha_{11}}b_{12}^*b_{12}\,=\,\left[\begin{array}{ccc}
\alpha_{22} & 2 & -1 \\ 2 & 3-\alpha_{11}-\frac{1}{\alpha_{11}} & 0 \\ -1 & 0 & 3-\alpha_{22}
\end{array}\right]\,.
\]
As $2\leq\alpha_{11}+\frac{1}{\alpha_{11}}$, the matrix
\[
c\,=\,\left[\begin{array}{ccc}
\alpha_{22} & 2 & -1 \\ 2 & 1 & 0 \\ -1 & 0 & 3-\alpha_{22}
\end{array}\right]
\]
is positive. Thus,
\[
0\leq\det(c)=-\alpha_{22}^2+7\alpha_{22}-13
\,.
\]
The polynomial $p(x)=x^2-7x+13$ has no real roots and $p(0)=13>0.$ Thus, $p(x)>0$ for every $x\in\mathbb R$ contradicting $p(\alpha_{22}) \leq 0$.
Therefore, $h$ is not a positive element of $\M_2(\oss_1\omax\oss_1)$.
 \end{proof}

Some important applications of Theorem \ref{sone} are given below in Theorems \ref{ocomax} and \ref{omaxe}.

\begin{theorem}\label{ocomax} 
The identity map from $\oss_n \oc \oss_m$ to $\oss_n \omax \oss_m$ is not 2-positive for all $n,m \ge 1$.
\end{theorem}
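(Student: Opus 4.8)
The plan is to reduce the general case to the case $n=m=1$, which is already settled by Theorem \ref{sone} together with Proposition \ref{c-min}. The guiding observation is that $\oss_1$ is simultaneously a sub-operator system of $\oss_k$ and a unital completely positive retract of it, in a manner compatible with the tensor products, so that the failure of $2$-positivity for the identity map at level $(1,1)$ can be transported to level $(n,m)$.

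First I would construct, for each $k$, a pair of unital completely positive maps $\iota_k : \oss_1 \to \oss_k$ and $\pi_k : \oss_k \to \oss_1$ with $\pi_k \circ \iota_k = \mathrm{id}_{\oss_1}$. For $\iota_k$, use the group homomorphism $\mathbb F_1 \to \fk$ sending the generator $u_1$ to the first generator $u_1$ of $\fk$; by the universal property it induces a unital $*$-homomorphism $\cstar(\mathbb F_1) \to \cstar(\fk)$, whose restriction to $\oss_1$ lands in $\oss_k$ and is therefore a unital completely positive map with $\iota_k(u_1)=u_1$. For $\pi_k$, use the group epimorphism $\fk \to \mathbb F_1$ that fixes $u_1$ and collapses the remaining generators to the identity; the induced unital $*$-homomorphism $\cstar(\fk) \to \cstar(\mathbb F_1)$ restricts to a unital completely positive map $\pi_k : \oss_k \to \oss_1$ with $\pi_k(u_1)=u_1$ and $\pi_k(u_j)=1$ for $j \ge 2$. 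By construction $\pi_k \circ \iota_k = \mathrm{id}_{\oss_1}$.

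Next I would invoke the functoriality of $\oc$ and $\omax$ to obtain unital completely positive maps
\[
\iota_n \otimes \iota_m : \oss_1 \oc \oss_1 \to \oss_n \oc \oss_m
\quad\text{and}\quad
\pi_n \otimes \pi_m : \oss_n \omax \oss_m \to \oss_1 \omax \oss_1 .
\]
Arguing by contradiction, suppose the identity map $\oss_n \oc \oss_m \to \oss_n \omax \oss_m$ is $2$-positive. Then the composition
\[
\oss_1 \oc \oss_1 \xrightarrow{\,\iota_n \otimes \iota_m\,} \oss_n \oc \oss_m \xrightarrow{\,\mathrm{id}\,} \oss_n \omax \oss_m \xrightarrow{\,\pi_n \otimes \pi_m\,} \oss_1 \omax \oss_1
\]
is $2$-positive: the outer maps are completely positive, the middle map is $2$-positive, and since $(\psi\circ\phi)^{(2)}=\psi^{(2)}\circ\phi^{(2)}$ a composition of positive linear maps is positive. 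On elementary tensors this composition sends $x \otimes y$ to $\pi_n(\iota_n(x)) \otimes \pi_m(\iota_m(y)) = x \otimes y$, so it equals the identity map $\oss_1 \oc \oss_1 \to \oss_1 \omax \oss_1$. By Proposition \ref{c-min} this is the identity map $\oss_1 \omin \oss_1 \to \oss_1 \omax \oss_1$, which is \emph{not} $2$-positive by Theorem \ref{sone}. This contradiction proves the theorem.

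The step demanding the most care is the first one: verifying that the retraction maps $\iota_k,\pi_k$ are genuinely unital and completely positive (as restrictions of unital $*$-homomorphisms) and that they compose to the identity on $\oss_1$. Once this is in place, the rest is a formal functoriality-and-retraction argument, and I expect no real obstacle there, since the only subtlety---that composing a $2$-positive map with completely positive maps preserves $2$-positivity---is immediate from the behavior of ampliations under composition.
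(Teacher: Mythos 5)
Your proof is correct and follows essentially the same reduction as the paper's: embed $\oss_1 \oc \oss_1$ into $\oss_n \oc \oss_m$, retract $\oss_n \omax \oss_m$ onto $\oss_1 \omax \oss_1$, and use functoriality of ${\rm c}$ and $\max$ to contradict Theorem~\ref{sone} (via Proposition~\ref{c-min}). The one point of divergence is the retraction: the paper's map $\psi_n$ sends the extra generators $u_2,\dots,u_n$ to $0$, which requires the universal ucp property of $\oss_n$ cited from another paper, whereas you collapse them to $1$ via the group epimorphism $\fk \to \mathbb{F}_1$, so your $\pi_k$ is the restriction of a unital $*$-homomorphism of group C$^*$-algebras --- equally valid and slightly more self-contained.
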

\begin{proof}
Since $\cstar(\mathbb{F}_1)$ can be canonically identified with a C$^*$-subalgebra of $\cstar(\mathbb{F}_n)$,
the canonical map $\iota_n:\oss_1 \to \oss_n$ is a complete order inclusion.
By the universal property of $\cl S_n$ \cite[Proposition 9.7]{kavruk--paulsen--todorov--tomforde2010},
we have a completely positive map $\psi_n: \oss_n \to \oss_1$ defined by sending the generators $\{u_2,...,u_n, u_{-2},..., u_{-n} \}$ to $0$
and fixing $u_1$ and $1$.
Thus, if for any $n,m$ we had that $\gamma: \oss_n \oc \oss_m \to \oss_n \omax \oss_m$ were 2-positive, then we would have that
$(\psi_n \otimes \psi_m) \circ \gamma \circ (\iota_n \otimes \iota_m): \oss_1 \oc \oss_1 \to \oss_1 \omax \oss_1$ is 2-positive, 
contradicting in this way Theorem~\ref{sone}.
\end{proof}

\begin{theorem}\label{omaxe}
We have that $\cstare(\oss_n\omax\oss_m) \neq \cstare(\oss_n)\omax\cstare(\oss_m)$ for every $n,m\geq 1$.
\end{theorem}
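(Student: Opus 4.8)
The plan is to deduce this from Theorem~\ref{l_uv} applied with $\tau=\max$, by exhibiting the desired inequality as the failure of one of its three equivalent conditions.

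First I would verify the hypothesis of Theorem~\ref{l_uv} for $\tau=\max$. Since $\max$ majorizes every operator system tensor product, we have $\oss_n\omax\oss_m\subseteq^+\oss_n\oess\oss_m$, which is exactly the required condition $\cl S(\mathfrak u)\otimes_\tau\cl S(\mathfrak v)\subseteq^+\cl S(\mathfrak u)\oess\cl S(\mathfrak v)$ with $\tau=\max$. Theorem~\ref{l_uv} then makes statements \eqref{ce-2} and \eqref{ce-3} equivalent, so the identification $\cstare(\oss_n\omax\oss_m)=\cstar(\fn)\omax\cstar(\fm)$ (up to a $*$-isomorphism fixing the operator system) holds if and only if $\oss_n\oess\oss_m=\oss_n\omax\oss_m$.

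Next I would translate condition \eqref{ce-3} into the language of the commuting and maximal tensor products. By Lemma~\ref{c-max coi}, $\oss_n\oess\oss_m=\oss_n\oc\oss_m$, so \eqref{ce-3} is precisely the assertion $\oss_n\oc\oss_m=\oss_n\omax\oss_m$. Here I would invoke Theorem~\ref{ocomax}: the identity map $\oss_n\oc\oss_m\to\oss_n\omax\oss_m$ is not $2$-positive, hence not a complete order isomorphism, so the two operator systems are genuinely distinct and condition \eqref{ce-3} fails.

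Finally, by the equivalence furnished by Theorem~\ref{l_uv}, the failure of \eqref{ce-3} forces the failure of \eqref{ce-2}, so no $*$-isomorphism fixing the operator system identifies $\cstare(\oss_n\omax\oss_m)$ with $\cstar(\fn)\omax\cstar(\fm)$. Since Proposition~\ref{cstare of oss} gives $\cstare(\oss_n)=\cstar(\fn)$ and $\cstare(\oss_m)=\cstar(\fm)$, the right-hand side $\cstare(\oss_n)\omax\cstare(\oss_m)$ coincides with $\cstar(\fn)\omax\cstar(\fm)$, and the desired inequality follows. The argument is essentially bookkeeping once Theorems~\ref{l_uv} and \ref{ocomax} are available; the only step that needs a moment's care is confirming that $\tau=\max$ meets the hypothesis of Theorem~\ref{l_uv} (immediate from $\max$ being the largest tensor product) and correctly matching the negation of \eqref{ce-2} to the statement being proved. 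All of the genuine analytic content was already spent in the non-$2$-positivity computation underlying Theorem~\ref{sone}.
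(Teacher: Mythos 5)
Your proposal is correct and follows essentially the same route as the paper's own proof: verify the hypothesis of Theorem~\ref{l_uv} for $\tau=\max$, use Lemma~\ref{c-max coi} to identify $\oss_n\oess\oss_m$ with $\oss_n\oc\oss_m$, invoke Theorem~\ref{ocomax} to see that condition \eqref{ce-3} fails, and conclude via the equivalence with \eqref{ce-2} together with Proposition~\ref{cstare of oss}. The only difference is cosmetic: you spell out why $\max$ satisfies the hypothesis (it majorizes every tensor product), where the paper simply asserts it.
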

\begin{proof} By Proposition \ref{cstare of oss}, $\cstare(\oss_n)=\cstar(\fn)$.
By Lemma \ref{c-max coi}, $\oss_n\oc\oss_m\coisubset\cstar(\fn)\omax\cstar(\fm)$;
in other words, $\oss_n\oc\oss_m = \oss_n\otimes_{\rm ess}\oss_m$.
The hypothesis of Theorem \ref{l_uv} is clearly satisfied for $\tau = \max$.
By Theorem \ref{ocomax}, $\oss_n\oc\oss_m\neq\oss_n\omax\oss_m$, and hence Theorem \ref{l_uv}
yields $\cstare(\oss_n\omax\oss_m) \neq \cstar(\fn)\omax\cstar(\fm)$.
\end{proof}

\begin{proposition} The map $\phi_2 \omin \phi_2: \M_2 \omin \M_2 \to \oss_1 \omin
  \oss_1$ is not a complete quotient map.
\end{proposition}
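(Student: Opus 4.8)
The plan is to exploit the contrast between the minimal and maximal tensor products established in Theorem~\ref{sone}, using the fact that the maximal tensor product is \emph{projective} with respect to complete quotient maps while the minimal one is not. First I would record the elementary observations that frame the argument. Since $\ost_2 = \M_2$ (every $2\times 2$ matrix is tri-diagonal), the map $\phi_2\colon\M_2\to\oss_1$ of Example~\ref{quo map} is a complete quotient map, and by nuclearity of $\M_2$ we have $\M_2\omin\M_2=\M_2\omax\M_2=\M_4$ with its unique operator system structure. The crucial point is that $\phi_2\omin\phi_2$ and $\phi_2\omax\phi_2$ are the \emph{same} linear map $\phi_2\otimes\phi_2$ from $\M_4$ onto $\oss_1\otimes\oss_1$; they differ only in the operator system structure carried by the target, and in particular they share the same kernel $\jay:=\ker(\phi_2\otimes\phi_2)$.

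Next I would establish that $\phi_2\omax\phi_2\colon\M_4\to\oss_1\omax\oss_1$ \emph{is} a complete quotient map. This follows from the projectivity of the maximal tensor product \cite{kavruk--paulsen--todorov--tomforde2011} together with functoriality of $\omax$; alternatively, writing $\phi_2\omax\phi_2=(\phi_2\omax\mathrm{id}_{\oss_1})\circ(\mathrm{id}_{\M_2}\omax\phi_2)$ and using Proposition~\ref{quo map criterion}, one checks directly that a composition of complete quotient maps is again a complete quotient map (given a strictly positive $Z\in\M_p(\oss_1\omax\oss_1)$, lift it first through one factor and then through the other).

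Finally I would argue by contradiction. Suppose $\phi_2\omin\phi_2$ were a complete quotient map. Then the induced maps $\dot\phi^{\min}\colon\M_4/\jay\to\oss_1\omin\oss_1$ and $\dot\phi^{\max}\colon\M_4/\jay\to\oss_1\omax\oss_1$ would both be complete order isomorphisms, and their composite
\[
\dot\phi^{\max}\circ(\dot\phi^{\min})^{-1}\colon\oss_1\omin\oss_1\to\oss_1\omax\oss_1
\]
would be a complete order isomorphism which, acting as $Z\mapsto Z$ on the common underlying space $\oss_1\otimes\oss_1$, is exactly the identity map. In particular the identity $\oss_1\omin\oss_1\to\oss_1\omax\oss_1$ would be $2$-positive, contradicting Theorem~\ref{sone}.

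I expect the main obstacle to be the second step, namely pinning down and correctly citing the projectivity of $\omax$ (equivalently, that $\phi_2\omax\phi_2$ is a complete quotient map), since the very phenomenon asserted here is that this projectivity fails for $\omin$. The remaining steps are bookkeeping with the quotient criterion of Proposition~\ref{quo map criterion} and the observation that the two tensor maps have a common kernel.
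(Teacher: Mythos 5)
Your proof is correct, and its endgame is the same as the paper's: use nuclearity of $\M_2$ to identify $\M_2\omin\M_2$ with $\M_2\omax\M_2$, and convert the assumed quotient property of $\phi_2\omin\phi_2$ into $2$-positivity of the identity map $\oss_1\omin\oss_1\to\oss_1\omax\oss_1$, contradicting Theorem \ref{sone}. The substantive difference is in what you require of $\phi_2\omax\phi_2$. The paper needs only that it is \emph{completely positive} (functoriality of $\omax$): assuming $\phi_2\omin\phi_2$ is a complete quotient map, a (strictly) positive $P\in\M_2(\oss_1\omin\oss_1)$ lifts to a positive $R\in\M_2(\M_2\omin\M_2)=\M_2(\M_2\omax\M_2)$, and then $P=(\phi_2\omax\phi_2)^{(2)}(R)$ is positive in $\M_2(\oss_1\omax\oss_1)$ --- done. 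You instead require that $\phi_2\omax\phi_2$ be a \emph{complete quotient map}, i.e., the projectivity of $\omax$. That fact is true and citable, but it is a deeper theorem, and your citation points to the wrong source: it is \cite[Proposition 1.6]{farenick--paulsen2011} (the present paper invokes exactly this in the proof of Lemma \ref{strictly pos lift}), not \cite{kavruk--paulsen--todorov--tomforde2011}; moreover, your proposed alternative via the factorization $\phi_2\omax\phi_2=(\phi_2\omax\mathrm{id}_{\oss_1})\circ(\mathrm{id}_{\M_2}\omax\phi_2)$ does not avoid projectivity, since each factor is itself an instance of it. You correctly flagged this as the main obstacle, but it can be bypassed entirely: since $\jay=\ker(\phi_2\omin\phi_2)=\ker(\phi_2\omax\phi_2)$, the factorization property of operator system quotients already yields a \emph{completely positive} map $\dot\phi^{\max}:\M_4/\jay\to\oss_1\omax\oss_1$, and your composite $\dot\phi^{\max}\circ(\dot\phi^{\min})^{-1}$ is completely positive using only this together with the hypothesis being contradicted; $\dot\phi^{\max}$ need not be an isomorphism. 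What your heavier input buys is a formally stronger conclusion --- that under the assumption, $\oss_1\omin\oss_1$ and $\oss_1\omax\oss_1$ would coincide as operator systems, both being completely order isomorphic to $\M_4/\jay$, rather than merely that the identity is $2$-positive --- but that extra strength is not needed for the contradiction.
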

\begin{proof} Since $\phi_2 \omax \phi_2: \M_2 \omax \M_2 \to \oss_1 \omax
  \oss_1$ is completely positive and $\M_2 \omin \M_2 = \M_2 \omax \M_2$,
  if $\phi_2 \omin \phi_2$ were a complete quotient map, then we could lift every positive
  element in $P \in \M_2(\oss_1 \omin \oss_1)_+$ to a positive element in
  $R \in \M_2(\M_2 \omin \M_2)_+$, which would imply that $(\phi_2 \omax \phi_2)(R)$ is
  positive in $\oss_1 \omax \oss_1$. However, this would show that the
  identity map from $\oss_1 \omin \oss_1$ to $\oss_1 \omax \oss_1$ is
  2-positive, contradicting Theorem \ref{sone}.
\end{proof}

Let $\mathbb S_n$ denote the group of permutations of $\{1,\dots,n\}$
and select $\sigma\in\mathbb S_n$. The linear map $\phi_\sigma: \oss_n
\to \oss_n$ defined on the canonical basis of $\oss_n$ by $\phi_\sigma(u_i) = u_{\sigma(i)}$,
is the restriction of a $*$-automorphism of $\cstar(\lgG)$ and is hence
a complete order isomorphism.
Fix another element $\tau\in\mathbb S_m.$
Since $\phi_{\sigma}$ and $\phi_{\tau}$ are complete order isomorphisms, for any
tensor product $\alpha$ that is {\em functorial} in the sense of
\cite{kavruk--paulsen--todorov--tomforde2011}, the canonical tensor product map
$\phi_{\sigma} \otimes_{\alpha} \phi_{\tau}: \oss_n \otimes_{\alpha}
\oss_m \to \oss_n \otimes_{\alpha} \oss_m$ is a complete
order isomorphism.  It was shown in
\cite{kavruk--paulsen--todorov--tomforde2011} that $\min,
{\rm c}$ and $\max$ are all functorial. We note that we do not know whether the
tensor product ${\rm ess}$ is functorial.

\begin{proposition}\label{no unitary}  In $\cstare(\oss_n\omax\oss_m)$,
$u_i\otimes 1$ fails to be a unitary element for every $i$ and
$1\otimes v_j$ fails to be a unitary element for every $j$.
\end{proposition}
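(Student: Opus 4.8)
The plan is to reduce the statement to the case $n=m=1$, where an extra symmetry (the flip) is available, and then to transport the resulting non-unitarity into $\oss_n\omax\oss_m$ along a completely order isomorphic copy of $\oss_1\omax\oss_1$. To begin, I would fix indices $i_0,j_0$ and observe that the permutation automorphisms $\phi_\sigma\otimes_{\max}\phi_\tau$ are complete order automorphisms of $\oss_n\omax\oss_m$ (max is functorial), hence extend to $*$-automorphisms of $\mathcal C:=\cstare(\oss_n\omax\oss_m)$ carrying $u_{i_0}\otimes 1$ to $u_1\otimes 1$ and $1\otimes v_{j_0}$ to $1\otimes v_1$. Since $*$-automorphisms preserve unitarity, it suffices to treat $i_0=j_0=1$.

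The base case asserts that neither $u_1\otimes 1$ nor $1\otimes u_1$ is unitary in $\cstare(\oss_1\omax\oss_1)$. Here Theorem \ref{omaxe} with $n=m=1$ gives $\cstare(\oss_1\omax\oss_1)\neq\cstar(\mathbb F_1)\omax\cstar(\mathbb F_1)$, i.e.\ statement (2) of Theorem \ref{l_uv} fails for $\tau=\max$ (whose hypothesis holds, as $\max\geq{\rm ess}$); by the equivalence $(1)\Leftrightarrow(2)$, statement (1) fails, so $u_1\otimes 1$ and $1\otimes u_1$ are not both unitary. Because the maximal tensor product is symmetric, the flip $x\otimes y\mapsto y\otimes x$ is a complete order automorphism of $\oss_1\omax\oss_1$ and extends to a $*$-automorphism of $\cstare(\oss_1\omax\oss_1)$ interchanging the two generators. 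Thus they are simultaneously unitary or simultaneously non-unitary, and so neither is unitary.

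Next I would realise $\oss_1\omax\oss_1$ as a completely order isomorphic sub-operator-system of $\oss_n\omax\oss_m$. Using the inclusion $\iota_n:\oss_1\to\oss_n$ and the completely positive coordinate map $\psi_n:\oss_n\to\oss_1$ from the proof of Theorem \ref{ocomax} (and likewise for $m$), one has $\psi_n\circ\iota_n={\rm id}_{\oss_1}$. By functoriality of $\max$ both $\iota_n\otimes_{\max}\iota_m$ and $\psi_n\otimes_{\max}\psi_m$ are completely positive, and their composite is ${\rm id}_{\oss_1\omax\oss_1}$. A completely positive map admitting a completely positive left inverse is a complete order embedding, whence $\iota_n\otimes_{\max}\iota_m:\oss_1\omax\oss_1\coisubset\oss_n\omax\oss_m$. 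This step is essential precisely because the maximal tensor product is not injective, so the embedding is not automatic and must be manufactured from the left inverse.

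Finally I would transport the conclusion. Viewing $\oss_n\omax\oss_m\subseteq\mathcal C$, let $\mathcal D\subseteq\mathcal C$ be the unital C$^*$-subalgebra generated by the image of $\oss_1\omax\oss_1$; by the previous step this image is completely order isomorphic to $\oss_1\omax\oss_1$ and generates $\mathcal D$, so $\mathcal D$ is a C$^*$-cover. The universal property of the C$^*$-envelope then supplies a $*$-epimorphism $\pi:\mathcal D\to\cstare(\oss_1\omax\oss_1)$ fixing $\oss_1\omax\oss_1$, in particular $\pi(u_1\otimes 1)=u_1\otimes 1$ and $\pi(1\otimes v_1)=1\otimes u_1$. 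If $u_1\otimes 1$ were unitary in $\mathcal C$, then (its inverse $(u_1\otimes1)^*$ lying in $\mathcal D$ by spectral permanence) it would be unitary in $\mathcal D$, and $\pi(u_1\otimes 1)=u_1\otimes 1$ would be unitary in $\cstare(\oss_1\omax\oss_1)$, contradicting the base case; the same applies to $1\otimes v_1$. The hard part is exactly this asymmetry: Theorem \ref{l_uv} and Theorem \ref{omaxe} only preclude the \emph{entire} family $\{u_i\otimes 1,\,1\otimes v_j\}$ from being unitary and say nothing about an individual generator. Isolating one generator requires pushing through the universal property in the direction $\mathcal D\twoheadrightarrow\cstare(\oss_1\omax\oss_1)$, since completely positive maps need not preserve unitarity forward, whereas $*$-epimorphisms do.
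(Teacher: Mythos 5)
Your proof is correct and is essentially the paper's own argument: the same ingredients appear in the same roles --- permutation and flip automorphisms extended to $*$-automorphisms of the C$^*$-envelope, the failure of condition (1) of Theorem \ref{l_uv} forced by ${\rm c}\neq\max$ (Theorems \ref{ocomax} and \ref{omaxe}), a completely positive embedding with completely positive left inverse supplied by functoriality of $\omax$, and a $*$-epimorphism from the generated C$^*$-subalgebra onto the smaller C$^*$-envelope. The only differences are organizational: you reduce everything to the base case $\oss_1\omax\oss_1$, whereas the paper proves the square case $\oss_n\omax\oss_n$ by the same dichotomy-plus-flip argument and then transports it into $\oss_n\omax\oss_m$; also, your appeal to ``spectral permanence'' is unnecessary, since $(u_1\otimes 1)^*$ lies in $\mathcal{D}$ simply because $\mathcal{D}$ is a unital $*$-subalgebra.
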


\begin{proof} If $\sigma\in\mathbb S_n$, then $\phi_{\sigma} \otimes {\rm id}$ is a complete order isomorphism
$\oss_n\omax\oss_m\rightarrow \oss_n\omax\oss_m$, by
the preceding observation. Therefore, by the universal property of C$^*$-envelopes, there
is an epimorphism $\pi_\sigma:\cstare(\oss_n\omax\oss_m)\rightarrow
\cstare\left(\oss_n\omax\oss_m\right)$
such that $u_{\sigma(i)}\otimes 1=\pi_\sigma(u_i\otimes 1)$ for every $i$. Thus, if $u_i\otimes 1$ is a unitary element of
$\cstare(\oss_n\omax\oss_m)$ for some $i$, then $u_\ell\otimes 1$ is unitary for every $\ell$.
A similar statement holds for the elements $1\otimes v_j$.
But, by Theorems \ref{l_uv} and \ref{ocomax}, at least one element in the set $\{u_i\otimes 1,1\otimes v_j\,:\,1\leq i\leq n,\;1\leq j\leq m\}$
fails to be unitary in $\cstare(\oss_n\omax\oss_m)$. Hence, $u_i\otimes 1$ fails to be a unitary element of $\cstare(\oss_n\omax\oss_m)$ for every $i$ or
$1\otimes v_j$ fails to be a unitary element of $\cstare(\oss_n\omax\oss_m)$ for every $j$.

Now consider the case where $n=m$ and let $\sigma: \oss_n \omax \oss_n \to \oss_n \omax \oss_n$ be the ``flip'' map $\sigma(x \otimes y) = y \otimes x.$ This map is a complete order isomorphism and hence induces a $*$-automorphism
$\pi: \cstare(\oss_n \omax \oss_n) \to \cstare(\oss_n \omax \oss_n)$ with $\pi(u_i \otimes 1) = 1 \otimes v_i.$ Hence, in this case both $u_i \otimes 1$ and $1 \otimes v_j$ must be non-unitary.

Now let $n <m$ and note that, as in the proof of Theorem \ref{ocomax},
the linear map $\iota: \oss_n \to \oss_m,$ given by $\iota(u_j) = v_j$, is a unital complete order isomorphism onto its range and
the linear map $\phi: \oss_m \to \oss_n$ defined by $\phi(v_j) = u_j, 1 \le j \le n$ and $\phi(v_j) =0, n<j \le m$, is a unital completely positive map
that is a left inverse of $\iota$.
By the functoriality of $\max$, the map ${\rm id}_n \otimes \iota: \oss_n \omax \oss_n \to \oss_n \omax \oss_m$ is completely positive
with completely positive left inverse ${\rm id}_n \otimes \phi$.
Thus, the image
$\cl R= {\rm span} \{ 1 \otimes 1, u_i \otimes 1, 1 \otimes v_j, u_i \otimes v_j: 1 \le i,j \le n \}$
of ${\rm id}_n \otimes \iota$ inside $\oss_n \omax \oss_m$
is an operator system that is completely order isomorphic to $\oss_n \omax \oss_n.$

Let $\cstar(\cl R)$ be the C$^*$-subalgebra of $\cstare(\oss_n \omax \oss_m)$ generated by $\cl R.$ By the universal
property of the C$^*$-envelope, there exists a surjective $*$-homomorphism
$\pi: \cstar(\cl R) \to \cstare(\oss_n \omax \oss_n)$ which fixes
$u_i \otimes 1$ and $1 \otimes v_j$ for $1 \le i,j \le n.$
If $u_i \otimes 1$ or $1 \otimes v_j$ were unitary in $\cstare(\oss_n \omax \oss_m)$ for some $i,j = 1,\dots,n$, 
then its image would be unitary in
$\cstare(\oss_n \omax \oss_n).$  Hence, $u_i \otimes 1$ and $1 \otimes v_j$ must be non-unitary for all $i,j = 1,\dots,n$
Permuting the generators of $\cl S_m$ shows that $1\otimes v_j$ fails to be unitary in 
$\cstare(\oss_n \omax \oss_n)$ for all $j = 1,\dots,m$. 
\end{proof}

%%%%%%%%%%%%%%%%%%%%%%%%%%%
%%%%%%%%%%%%%%%%%%%%%%%%%%%

\section{The Universal Operator Systems $\oss_1$ and $\oss_1^d$ and their Tensor Products}\label{op th}

The operator systems $\oss_1$ and $\oss_1^d$ represent certain universal objects in operator theory, which we explain in Theorems \ref{universal contraction} 
and \ref{universal num-rad contraction} below. The dual system $\oss_1^d$ admits a concrete representation as a matrix operator system: namely,
 $\oss_1^d$ is (completely order isomorphic to) the 
operator subsystem $\mbox{Span}\{I,E_{12}, E_{21}\}$ of $\M_2$
 \cite[Theorem 4.4]{farenick--paulsen2011}.

 The first result in this direction is certainly well-known, but we state and prove it for completeness.

\begin{proposition}\label{universal contraction} The operator system $\oss_1$ is the universal operator system of a contraction. That is, if
$\cl H$ is a Hilbert space and $T\in \B(\H)$ is a contraction, then there is a unital completely positive map $\phi: \oss_1
\to \B(\H)$ with $\phi(u_1) = T$. Conversely, if $\phi: \oss_1 \to \B(\H)$
is any unital completely positive map, then $\phi(u_1)$ is a contraction.
\end{proposition}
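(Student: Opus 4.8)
The plan is to prove the two implications separately, handling the easier converse first. For the converse, I would exploit that $u_1$ is a unitary element of $\cstar(\mathbb F_1)$, so $u_1^*u_1=1$ and hence the rank-one factorization
\[
\begin{bmatrix} 1 & u_1 \\ u_1^* & 1 \end{bmatrix} = \begin{bmatrix} 1 \\ u_1^* \end{bmatrix}\begin{bmatrix} 1 & u_1 \end{bmatrix}
\]
exhibits this matrix as a positive element of $\M_2(\oss_1)$. Given any unital completely positive map $\phi:\oss_1\to\B(\H)$, its second ampliation $\phi^{(2)}$ is positive, so
\[
\begin{bmatrix} I & \phi(u_1) \\ \phi(u_1)^* & I \end{bmatrix} = \phi^{(2)}\!\left(\begin{bmatrix} 1 & u_1 \\ u_1^* & 1\end{bmatrix}\right)\ge 0 .
\]
Since a selfadjoint operator matrix $\left[\begin{smallmatrix} I & B \\ B^* & I\end{smallmatrix}\right]$ is positive precisely when $\|B\|\le 1$, this forces $\phi(u_1)$ to be a contraction. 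I note that only $2$-positivity of $\phi$ is used here.

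For the forward direction I would build $\phi$ by dilation. Given a contraction $T\in\B(\H)$, I would invoke the existence of a unitary dilation: there is a Hilbert space $\K\supseteq\H$ and a unitary $U\in\B(\K)$ with $T = P_\H U|_\H$, where $P_\H$ is the orthogonal projection of $\K$ onto $\H$ (the Halmos dilation $U=\left[\begin{smallmatrix} T & (I-TT^*)^{1/2} \\ (I-T^*T)^{1/2} & -T^*\end{smallmatrix}\right]$ on $\H\oplus\H$ serves). Because $U$ is unitary, the universal property of $\cstar(\mathbb F_1)$ recalled at the start of Section \ref{s_osdg} produces a $*$-representation $\pi:\cstar(\mathbb F_1)\to\B(\K)$ with $\pi(u_1)=U$. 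Letting $V:\H\to\K$ denote the inclusion isometry, I would define $\phi:\oss_1\to\B(\H)$ by $\phi(x)=V^*\pi(x)V$. As the compression of a $*$-homomorphism by an isometry, $\phi$ is completely positive, and it is unital since $\pi(1)=I_\K$ gives $\phi(1)=V^*V=I_\H$. Finally $\phi(u_1)=V^*\pi(u_1)V=P_\H U|_\H=T$, as required, and then automatically $\phi(u_1^*)=T^*$.

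The only genuine input is the unitary dilation theorem in the forward direction; everything else is bookkeeping with the universal property of $\cstar(\mathbb F_1)$ and the fact that compressions of $*$-homomorphisms are unital completely positive. If one preferred to avoid dilation theory, an alternative would be to use the universal C$^*$-algebra $\cstaru(\oss_1)$ together with its defining property, but the dilation route is the most transparent and self-contained. I would therefore expect no serious obstacle: the main points to get right are the orientation of the dilation identity ($T=P_\H U|_\H$, which delivers $\phi(u_1^*)=T^*$ for free) and the observation that the converse needs only positivity of $\phi^{(2)}$ rather than full complete positivity.
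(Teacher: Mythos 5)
Your proof is correct, and in the main (forward) direction it is exactly the paper's argument: dilate the contraction $T$ to a unitary $U$, use the universal property of $\cstar(\mathbb F_1)\cong C(\mathbb T)$ to get a $*$-representation sending $u_1$ to $U$, and compress back to $\H$. The only divergence is in the converse. The paper dispatches it in one line by citing the standard fact that unital completely positive maps are (completely) contractive, so $\|\phi(u_1)\|\le\|u_1\|=1$. You instead prove it by hand: since $u_1$ is unitary, the matrix $\left[\begin{smallmatrix} 1 & u_1 \\ u_1^* & 1\end{smallmatrix}\right]$ factors as a positive element of $\M_2(\oss_1)$, and applying $\phi^{(2)}$ yields $\left[\begin{smallmatrix} I & \phi(u_1) \\ \phi(u_1)^* & I\end{smallmatrix}\right]\ge 0$, forcing $\|\phi(u_1)\|\le 1$. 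Both are valid; your version is more self-contained and has the small bonus of isolating that only $2$-positivity of $\phi$ (together with unitality and the fact that positive maps are $*$-preserving) is needed for the converse, whereas the paper's citation is shorter but buries that refinement inside a general theorem about ucp maps.
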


\begin{proof} If $T\in\B(\H)$ is a contraction, then $T$ has a unitary dilation $U\in\B(\K)$ for some Hilbert space $\K\supseteq\H$.
As $\cstar(\mathbb F_1)\equiv C(\mathbb T)$ is the universal C$^*$-algebra generated by a unitary, there is
a unital $*$-homomorphism $\pi:C(\mathbb T)\rightarrow\B(\K)$
such that $\pi(u_1)=U$. Let $\phi:\oss_1\rightarrow\B(\H)$ be the restriction of $\pi$ to $\oss_1$ followed by the compression of operators in $\B(\K)$ to $\H$.
Then $\phi$ is a unital completely positive map with $\phi(u_1) = T$. The converse follows from the fact that
unital completely positive maps are (completely) contractive.
\end{proof}

We now consider a dual result.

\begin{definition} The \emph{numerical radius} of an operator $T\in\B(\H)$ is the quantity $w(T)$ defined by
\[
w(T)\,=\,\sup\{|\langle T\xi,\xi\rangle|\,:\,\xi\in\H,\;\|\xi\|=1\}\,.
\]
\end{definition}

\begin{proposition}\label{universal num-rad contraction}
The operator system $\oss_1^d$ is the universal operator system of an operator with numerical radius at most $1/2$.
That is, if $\cl H$ is a Hilbert space and 
$T \in \B(\H)$ is such that $w(T) \le 1/2$, then there exists a unital
 completely positive map $\phi: \oss_1^d \to \B(\H)$ with $\phi(E_{12})
  = T$. Conversely, if $\phi: \oss_1^d \to \B(\H)$ is a unital completely
  positive map, then $w(\phi(E_{12})) \le 1/2$.
\end{proposition}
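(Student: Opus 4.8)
The plan is to reduce both directions to the known structure of $\oss_1^d$ as the operator subsystem $\mbox{Span}\{I, E_{12}, E_{21}\}$ of $\M_2$, together with the universal/dual properties of $\oss_1$ already established. The most natural route exploits duality: since $\oss_1$ is the universal operator system of a contraction (Proposition \ref{universal contraction}), and $\oss_1^d$ is its operator system dual, I expect the numerical-radius condition $w(T) \le 1/2$ to emerge as the precise dual constraint to the contraction condition $\|S\| \le 1$.

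First I would treat the converse (easier) direction. Given a unital completely positive map $\phi: \oss_1^d \to \B(\H)$, I want to show $w(\phi(E_{12})) \le 1/2$. The key elementary fact is the standard characterisation of numerical radius: $w(T) \le 1/2$ if and only if the $\M_2$-valued matrix $\left[\begin{smallmatrix} I & 2T \\ 2T^* & I \end{smallmatrix}\right]$ (equivalently $\left[\begin{smallmatrix} I & T \\ T^* & I \end{smallmatrix}\right] + \tfrac12 \cdots$, up to scaling) is positive; more precisely, $w(T)\le 1/2$ iff $\mbox{Re}(e^{i\theta}T) \le \tfrac12 I$ for all $\theta$, which is Ando's characterisation. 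Inside $\M_2 = \B(\mathbb C^2)$ one checks directly that the generator $E_{12}$ of $\oss_1^d$ already satisfies $w(E_{12}) = 1/2$; then, since unital completely positive maps are in particular unital positive and contractive in the appropriate matricial sense, applying $\phi$ to the positivity certificate for $E_{12}$ inside $\M_2$ yields the corresponding certificate for $\phi(E_{12})$, forcing $w(\phi(E_{12})) \le 1/2$.

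For the forward direction, suppose $T \in \B(\H)$ with $w(T) \le 1/2$. I must construct a unital completely positive map $\phi: \oss_1^d \to \B(\H)$ with $\phi(E_{12}) = T$. The cleanest approach is to use the concrete realisation $\oss_1^d \subseteq \M_2$ and extend: define $\phi$ on the basis by $\phi(I)=I$, $\phi(E_{12})=T$, $\phi(E_{21})=T^*$, and verify complete positivity directly. Complete positivity of a map out of a finite-dimensional operator system can be checked by testing positivity on the matricial cones, and here the condition $w(T)\le 1/2$ is exactly what guarantees that the image cone is positive; Ando's theorem (already cited in the proof of Lemma \ref{step 2}) provides the bridge between the numerical-radius bound and the required matrix positivity. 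Alternatively, and perhaps more transparently, I would invoke duality: a unital completely positive map $\oss_1^d \to \B(\H)$ corresponds to an element of a suitable positive cone that is dual to the unit ball of contractions represented by $\oss_1$, so the existence of $\phi$ follows formally from Proposition \ref{universal contraction} by dualising.

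The main obstacle I anticipate is making the duality correspondence between ``contraction'' and ``numerical radius $\le 1/2$'' fully rigorous at the matricial (completely positive) level rather than merely at the level of states. One must be careful that complete positivity of $\phi$ is equivalent to the full family of matricial numerical-radius-type inequalities on $T$, not just the scalar bound $w(T)\le 1/2$; the crux is showing these matricial conditions reduce to the single scalar condition, which is precisely where Ando's theorem does the essential work. I would therefore organise the argument so that the forward direction rests on Ando's theorem to upgrade the scalar bound $w(T)\le 1/2$ to the matrix positivity needed for complete positivity of the extension.
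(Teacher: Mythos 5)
Your skeleton is the paper's: realise $\oss_1^d$ concretely as $\mbox{Span}\{I,E_{12},E_{21}\}\subset\M_2$ and let Ando's theorem carry the weight. Your converse direction can indeed be made rigorous, and in fact more cheaply than in the paper (which extends $\phi$ to $\M_2$ by Arveson and then applies Ando): for every $\theta$ the element $I-e^{i\theta}E_{12}-e^{-i\theta}E_{21}=\left[\begin{smallmatrix}1 & -e^{i\theta}\\ -e^{-i\theta} & 1\end{smallmatrix}\right]$ is positive in $\oss_1^d$, so a unital positive $\phi$ gives $\mathrm{Re}(e^{i\theta}\phi(E_{12}))\le\tfrac12 I_{\H}$ for all $\theta$, i.e.\ $w(\phi(E_{12}))\le 1/2$. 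But be careful with your formulations: positivity of $\left[\begin{smallmatrix}I & 2T\\ 2T^* & I\end{smallmatrix}\right]$ characterises $\|T\|\le 1/2$, not $w(T)\le 1/2$, and the rotation criterion $\mathrm{Re}(e^{i\theta}T)\le\tfrac12 I$ is elementary — it is \emph{not} Ando's theorem. Ando's theorem is the much deeper completion statement: $w(T)\le 1/2$ if and only if there exist $A,B\ge 0$ with $A+B=I$ such that $\left[\begin{smallmatrix}A & T\\ T^* & B\end{smallmatrix}\right]\ge 0$.

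The genuine gap is in the forward direction. Saying you will ``verify complete positivity directly by testing positivity on the matricial cones,'' with ``Ando's theorem providing the bridge,'' restates the definition of complete positivity without supplying a mechanism: a map defined only on the three-dimensional subsystem $\mbox{Span}\{I,E_{12},E_{21}\}$ admits no Choi-type criterion, so there is nothing concrete to test against. The missing idea — and this is exactly what the paper does — is to use Ando's completion to \emph{extend the map to all of} $\M_2$: given $w(T)\le 1/2$, Ando produces $A,B\in\cl B(\cl H)_+$ with $A+B=I_{\H}$ and $\left[\begin{smallmatrix}A & T\\ T^* & B\end{smallmatrix}\right]\ge 0$; one then defines $\phi$ on the full matrix algebra by $\phi(E_{11})=A$, $\phi(E_{22})=B$, $\phi(E_{12})=T=\phi(E_{21})^*$, observes that the displayed block matrix is precisely the Choi matrix of this map, so Choi's criterion gives complete positivity on $\M_2$, and finally restricts to $\oss_1^d$ (unitality follows from $A+B=I_\H$). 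Your proposal never chooses images for $E_{11}$ and $E_{22}$, which is the entire point of invoking Ando; and your fallback, the formal dualisation of Proposition~\ref{universal contraction}, is acknowledged by you to be unproved at the matricial level, so it cannot close the gap either.
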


 \begin{proof} Given a unital completely positive map $\phi: \oss_1^d \to
   \B(\H)$, use Arveson's extension theorem to extend this map to a unital
   completely positive map $\phi:\M_2 \to \B(\H).$ Let $T = \phi(E_{12})$
   and set $A= \phi(E_{11}), B = \phi(E_{22}),$ so that $A+B =
   \phi(E_{11} +E_{22}) = I_\H$.
Since \[ X\,=\,\left[\begin{array}{cc} E_{11} & E_{12}\\E_{21}&
  E_{22} \end{array}\right] \] is positive, we have that
\begin{equation}\label{ando} \phi^{(2)}(X)\,=\,\left[\begin{array}{cc}  A &T\\T^* & B\end{array}\right] \,
\end{equation}
is positive.
By Ando's theorem \cite{ando1973}, $w(T) \le 1/2$.

Conversely, if $w(T) \le 1/2,$ then again by Ando's theorem there exist
$A,B \in \cl B(\cl H)_+$ with $A+B = I_\H$ such that the operator matrix \eqref{ando} is
positive. The linear map
$\phi: \M_2 \to \B(\H)$ given by
$\phi(E_{12}) = T = \phi(E_{21})^*$, $\phi(E_{11}) = A$ and $\phi(E_{22}) = B$ is completely positive by Choi's criterion
(see \cite[Theorem 3.14]{Paulsen-book}), and the proof is complete.
\end{proof}

An alternate proof of Proposition \ref{universal num-rad contraction} can be achieved by
appealing to Arveson's nilpotent dilation theorem \cite[Theorem 1.3.1]{arveson1972}
rather than Ando's theorem \cite{ando1973}.

\begin{theorem}\label{mixed}
The following tensor product relations involving $\oss_1$ and $\oss_1^d$ hold:
\begin{enumerate}
\item $\oss_1\omin\oss_1=\oss_1\oc\oss_1$;
\item $\oss_1\oc\oss_1\neq \oss_1\omax\oss_1$;
\item $\oss_1\omin\oss_1^d=\oss_1\oc\oss_1^d$;
\item $\oss_1\oc\oss_1^d\neq \oss_1\omax\oss_1^d$;
\item $\oss_1^d \omin \oss_1^d = \oss_1^d \oc \oss_1^d$;
\item $ \oss_1^d \oc \oss_1^d \ne \oss_1^d \omax \oss_1^d$.
\end{enumerate}
\end{theorem}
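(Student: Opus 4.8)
The plan is to split the six assertions into the equalities (1), (3), (5), all of the form $\min={\rm c}$, and the separations (2), (4), (6), all of the form ${\rm c}\neq\max$, and to obtain the $\oss_1^d$-statements from the $\oss_1$-statements by duality together with the nuclearity of $\cstare(\oss_1^d)=\M_2$.

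Statements (1)--(3) are essentially immediate. Assertion (1) is Proposition~\ref{c-min}. For (2), since $\oss_1\omin\oss_1=\oss_1\oc\oss_1$ by (1) while the identity $\oss_1\omin\oss_1\to\oss_1\omax\oss_1$ is not $2$-positive by Theorem~\ref{sone}, the ${\rm c}$ and $\max$ structures differ. For (3) I would apply Proposition~\ref{c-min} with $\cl R=\oss_1^d$ to get $\oss_1^d\omin\oss_1=\oss_1^d\oc\oss_1$ and then invoke the symmetry of the $\min$ and commuting tensor products.

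For (4), since $\oss_1\oc\oss_1^d=\oss_1\omin\oss_1^d$ by (3), it suffices to separate $\min$ from $\max$, and I expect this to happen already at the ground level (unlike (2), where one needs $2\times2$ matrices). Realising $\oss_1^d=\mathrm{span}\{1,E_{12},E_{21}\}\subseteq\M_2$, I would take
\[
y\,=\,\tfrac32\,1\otimes 1 + u_1\otimes E_{12} + u_1^*\otimes E_{21}\in\oss_1\otimes\oss_1^d.
\]
Under $\oss_1\omin\oss_1^d\coisubset C(\mathbb T)\omin\M_2$ this is the function $z\mapsto\left[\begin{smallmatrix}3/2 & z\\ \bar z & 3/2\end{smallmatrix}\right]$, with eigenvalues $\tfrac32\pm1>0$, so $y$ is strictly positive in $\oss_1\omin\oss_1^d$. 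If $y$ were positive in $\oss_1\omax\oss_1^d$, then Lemma~\ref{step 2} (with $\ost=\oss_1^d$, $t_0=\tfrac32 1$, $t_1=E_{12}$) would yield $t_0^1,t_0^2\in(\oss_1^d)_+$ with $t_0^1+t_0^2=\tfrac32 1$ and $\left[\begin{smallmatrix}t_0^1 & E_{12}\\ E_{21} & t_0^2\end{smallmatrix}\right]\in\M_2(\oss_1^d)_+$; writing $t_0^i=\left[\begin{smallmatrix}a_i & b_i\\ \bar b_i & a_i\end{smallmatrix}\right]$, the principal $2\times2$ submatrix on the first and last coordinates is $\left[\begin{smallmatrix}a_1 & 1\\ 1 & a_2\end{smallmatrix}\right]$, forcing $a_1a_2\geq1$, incompatible with $a_1+a_2=\tfrac32$. (The non-strict case is handled by perturbing $t_0$ by $\varepsilon\,1$ with $\varepsilon<\tfrac12$.) Thus $y\notin(\oss_1\omax\oss_1^d)_+$, so in fact the identity $\oss_1\oc\oss_1^d\to\oss_1\omax\oss_1^d$ fails to be even positive, proving (4).

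For (5) I would first note that both $\oss_1^d\omin\oss_1^d$ and $\oss_1^d\oess\oss_1^d$ embed through the same canonical map into $\M_2\omin\M_2=\M_2\omax\M_2=\M_4$ — the former because $\min$ respects the inclusion $\oss_1^d\coisubset\cstare(\oss_1^d)$, the latter by the definition of ${\rm ess}$ and the nuclearity of $\M_2$ — whence $\oss_1^d\omin\oss_1^d=\oss_1^d\oess\oss_1^d$. It then remains to upgrade ${\rm ess}$ to ${\rm c}$, and granting (5) statement (6) follows by duality: under the finite-dimensional identification $(\oss\omin\ost)^d=\oss^d\omax\ost^d$ \cite{kavruk--paulsen--todorov--tomforde2011}, the dual of the non-$2$-positive identity map of Theorem~\ref{sone} is the non-$2$-positive identity $\oss_1^d\omin\oss_1^d\to\oss_1^d\omax\oss_1^d$, so $\oss_1^d\omin\oss_1^d\neq\oss_1^d\omax\oss_1^d$, and combining with (5) gives $\oss_1^d\oc\oss_1^d\neq\oss_1^d\omax\oss_1^d$. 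The main obstacle is therefore the equality ${\rm ess}={\rm c}$ for $\oss_1^d$: since ${\rm c}\leq{\rm ess}$ always, I must show every ${\rm ess}$-positive element is ${\rm c}$-positive, i.e.\ that a commuting pair of completely positive maps $\phi,\psi:\oss_1^d\to\B(\H)$ dilates to a commuting pair $\Phi,\Psi:\M_2\to\B(\H)$; then $\Phi\cdot\Psi$ is completely positive on $\M_2\oc\M_2=\M_2\omax\M_2$ and testing against it converts ${\rm ess}$-positivity into ${\rm c}$-positivity. Extending a single map is Arveson's theorem (as in Proposition~\ref{universal num-rad contraction}); the delicate part is keeping the extensions commuting. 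The natural route is to perform the Ando decomposition $\Phi(E_{11})=A,\ \Phi(E_{22})=B$ (with $A+B=1$ and $\left[\begin{smallmatrix}A & T\\ T^* & B\end{smallmatrix}\right]\geq0$, $T=\phi(E_{12})$) \emph{inside} $\mathrm{W}^*(T)$, using that the valid pairs $(A,B)$ form a convex set invariant under conjugation by unitaries commuting with $T$, so that an averaging argument lands $A,B$ in $\mathrm{W}^*(T)$, which commutes with $\mathrm{W}^*(\psi(E_{12}))$. Making this averaging rigorous for arbitrary $\cl H$ is the one technical point that will require care.
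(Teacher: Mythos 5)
Items (1), (2), (3) and (6) of your proposal are correct and coincide with the paper's own argument: (1) and (3) are Proposition \ref{c-min} (plus symmetry of $\omin$ and $\oc$), (2) is Theorem \ref{sone}, and (6) is obtained, exactly as in the paper, by dualizing Theorem \ref{sone} through the finite-dimensional duality $(\oss\omin\ost)^d=\oss^d\omax\ost^d$ and combining with (5). Your item (4) is correct but genuinely different from the paper's: the paper invokes \cite[Proposition 4.12]{kavruk2011}, so that it only has to check that $\oss_1^d$ is not injective (hence not completely order isomorphic to a C$^*$-algebra), whereas you exhibit the explicit element $y=\tfrac32\,1\otimes 1+u_1\otimes E_{12}+u_1^*\otimes E_{21}$, strictly positive in $\oss_1\omin\oss_1^d$, and rule out max-positivity via Lemma \ref{step 2} with $\ost=\oss_1^d$; the forced conditions $a_1+a_2=\tfrac32+\varepsilon$ and $a_1a_2\geq 1$ are indeed incompatible for $\varepsilon<\tfrac12$. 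This is a clean, self-contained alternative, and it gives the extra information that the identity from $\oss_1\oc\oss_1^d$ to $\oss_1\omax\oss_1^d$ already fails to be positive at the ground level.

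The genuine gap is in (5). Your reduction is sound and is in fact the paper's strategy: since $\cstare(\oss_1^d)=\M_2$ is nuclear, $\oss_1^d\omin\oss_1^d=\oss_1^d\oess\oss_1^d$, and equality with $\oc$ follows once every pair of unital completely positive maps $\phi,\psi:\oss_1^d\to\cl B(\cl H)$ with commuting ranges extends to completely positive maps on $\M_2$ with commuting ranges; by Choi's criterion this amounts to finding, for $T=\phi(E_{12})$, an Ando pair $A,B\geq 0$, $A+B=I$, $\left[\begin{smallmatrix} A & T\\ T^* & B\end{smallmatrix}\right]\geq 0$, with $A,B\in\wstar(T)$ (and similarly for $R=\psi(E_{12})$). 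The gap is your method for this last step. Averaging the convex set of admissible pairs over the unitaries of $\{T,T^*\}'$ is not supported by any fixed-point theorem: that set is compact only in the weak$^*$ topology (WOT on bounded sets), not weakly compact, so Ryll--Nardzewski does not apply, and the pattern ``norm-isometric, affine, weak$^*$-continuous action on a weak$^*$-compact convex set'' has no fixed point in general --- the translation action of $\mathbb{F}_2$ on the means of $\ell^\infty(\mathbb{F}_2)$ is the standard counterexample. Worse, the general principle you invoke (every conjugation-invariant WOT-compact convex set meets the commutant) implies Schwartz's property P for $\{T,T^*\}'$, which by Schwartz's theorem forces $\wstar(T)$ to be injective; and $\wstar(T)$ need not be injective (by Wogen's theorem the properly infinite, non-injective algebra $L(\mathbb{F}_2)\overline{\otimes}\cl B(\ell^2)$ is singly generated, and its generator can be scaled to have numerical radius at most $1/2$). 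So no averaging argument of the kind you describe can be made to work ``for arbitrary $\cl H$''.

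The statement you need is nevertheless true, and this is exactly where the paper imports outside input: by \cite[Theorem 1.1]{farenick--kavruk--paulsen2011}, if $w(T)<1/2$ then an Ando pair for $T$ exists inside the C$^*$-algebra $\cstar(T)$; the paper applies this to $tT$ for $0<t<1$ and takes strong operator limit points as $t\to 1$ to place $A,B$ in $\wstar(T)$, which commutes elementwise with $\wstar(R)$ because $T$ and $R$ $*$-commute. (In fact that cited step can even be reassembled from tools already in this paper: for $w(T)<1/2$ the element $1\otimes I+z\otimes T+\bar z\otimes T^*$ is strictly positive in $\oss_1\omin\cstar(T)$, Proposition \ref{c-min} and the identity ${\rm c}=\max$ against C$^*$-algebras upgrade this to strict positivity in $\oss_1\omax\cstar(T)$, and Lemma \ref{step 2} with $\ost=\cstar(T)$ then produces the Ando pair inside $\cstar(T)$ --- a factorization/lifting argument, not an averaging one.) Replacing your averaging step by this argument completes your proof of (5), and with it (6).
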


\begin{proof}
Statements (1) and (3) follow from Proposition \ref{c-min}, while (2) follows from Theorem \ref{sone}.

To prove (4), by \cite[Proposition 4.12]{kavruk2011}, we need only prove that $\oss_1^d$ is not completely order isomorphic to
a C$^*$-algebra. Because every finite-dimensional C$^*$-algebra is necessarily injective, we need only note that
$\oss_1^d$ is not an injective operator system. That $\oss_1^d$ fails to be injective is a consequence of \cite[Corollary 7.2]{choi--effros1977}.

For statement (5), let $\phi: \oss_1^d \to \cl B(\cl H)$ and $\psi: \oss_1^d \to \cl B(\cl H)$ be
unital completely positive maps with commuting ranges and let $T = \phi(E_{12}),$ $R= \psi(E_{12}).$ Then $w(T) \le 1/2,$ $w(R) \le 1/2,$ $RT=TR$ and $R^*T=TR^*.$
If we fix real numbers $0< r,t<1,$ then $w(tT) < 1/2$ and $w(rR) < 1/2.$
Let $\cstar(T)$ and $\wstar(T)$ denote the unital C$^*$-algebra and the von Neumann algebra generated by $T$, respectively.
By \cite[Theorem 1.1]{farenick--kavruk--paulsen2011}, there exist $A_t,B_t\in\cstar(T)_+$
such that $A_t+B_t = I$ and the matrix
\[ \begin{bmatrix} A_t & tT\\tT^* & B_t \end{bmatrix} \]
is positive.  Clearly, $A_t$ and $B_t$ commute with $R$ and $R^*.$  Now by taking limit points in the strong operator topology
we obtain operators $A$ and $B$ in $\wstar(T)$  such that $A+B = I$ and
\[ \begin{bmatrix} A & T\\T^* & B \end{bmatrix}\]
is positive.
Similarly, we obtain positive operators $C$ and $D$ in $\wstar(R)$ such that $C+D=I$ and
\[ \begin{bmatrix} C &R\\R^* & D \end{bmatrix} \] is positive.

By Choi's criterion \cite[Theorem 3.14]{Paulsen-book}, the maps $\tilde{\phi} : \cl M_2 \to \cl B(\cl H)$ and $\tilde{\psi} : \cl M_2 \to \cl B(\cl H)$
defined by $\tilde{\phi}(E_{11}) = A$, $\tilde{\phi}(E_{12}) = \tilde{\phi}(E_{21})^* = T$ , $\tilde{\phi}(E_{22}) = B$ and
$\tilde{\psi}(E_{11}) = C$, $\tilde{\psi}(E_{12}) = \tilde{\psi}(E_{21})^* = R$, $\tilde{\psi}(E_{22}) = D$
are completely positive extensions of $\phi$ and $\psi$, respectively, that have commuting ranges.
We thus obtain a unital completely positive map $\tilde{\phi} \otimes \tilde{\psi} : \cl M_2 \oc \cl M_2 = \cl M_2 \omin \cl M_2 \to \cl B(\cl H).$
Its restriction to $\oss_1^d \omin \oss_1^d \coisubset \cl M_2 \omin \cl M_2$ is clearly completely positive.

Since every pair of unital completely positive maps on $\cl S_1^d$ with commuting ranges
gives rise to a unital completely positive map on $\oss_1^d \omin \oss^d_1,$ we see that
$\cl M_n(\oss_1^d \omin \oss_1^d)_+ =\cl  M_n(\oss_1^d \oc \oss_1^d)_+$ for all $n.$  Hence,
$\oss_1^d \omin \oss_1^d = \oss_1 \oc \oss_1^d.$

To prove the final assertion, we use \cite[Proposition 1.9]{farenick--paulsen2011}:
$(\oss \omin \ost)^d = \oss^d \omax\ost^d$ and $(\oss \omax \ost)^d= \oss^d \omin \ost^d$ for all finite-dimensional operator systems
$\oss$ and $\ost$. Because $\oss_1 \omin \oss_1 \ne \oss_1 \omax \oss_1$, we conclude that
$\oss_1^d\omin \oss_1^d \ne \oss_1^d \omax \oss_1^d$. Hence, this inequality and $\oss_1^d \omin \oss_1^d = \oss_1^d \oc \oss_1^d$
imply that $\oss_1^d \oc \oss_1^d \ne \oss_1^d \omax \oss_1^d$.
\end{proof}

Recall that two operators $R$ and $S$ are $*$-commuting if $RS=SR$ and $RS^*=S^*R$.

\begin{corollary} The operator systems $\oss_1\omin\oss_1$ and $\oss_1^d\omin\oss_1^d$ are universal operator systems, respectively,
for  pairs of $*$-commuting contractions and pairs of $*$-commuting operators of numerical radius at most $\frac{1}{2}$.
\end{corollary}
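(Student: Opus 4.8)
The plan is to treat the two assertions in parallel, deriving each from the corresponding single-variable universal description together with the coincidence of the minimal and commuting tensor products. For the contraction case the key inputs are Proposition~\ref{universal contraction}, which identifies unital completely positive maps on $\oss_1$ with contractions via $\phi\mapsto\phi(u_1)$, and the equality $\oss_1\omin\oss_1=\oss_1\oc\oss_1$ of Proposition~\ref{c-min} (equivalently Theorem~\ref{mixed}(1)); for the numerical-radius case one uses instead Proposition~\ref{universal num-rad contraction} and the equality $\oss_1^d\omin\oss_1^d=\oss_1^d\oc\oss_1^d$ of Theorem~\ref{mixed}(5). The engine in both cases is the definition of the commuting tensor product: a pair of unital completely positive maps with commuting ranges produces a unital completely positive ``product'' map on the $\oc$-tensor product.

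First I would record the elementary observation that two contractions $T,S\in\B(\H)$ $*$-commute if and only if the operator systems $\mathrm{span}\{1,T,T^*\}$ and $\mathrm{span}\{1,S,S^*\}$ commute elementwise (the relations $TS=ST$ and $TS^*=S^*T$ together with their adjoints give all four commutations). Granting this, the forward realization direction is immediate: given $*$-commuting contractions $T,S$, Proposition~\ref{universal contraction} furnishes unital completely positive maps $\phi,\psi:\oss_1\to\B(\H)$ with $\phi(u_1)=T$ and $\psi(u_1)=S$; their ranges commute; hence by the definition of $\oc$ the map $\phi\cdot\psi$ is unital completely positive on $\oss_1\oc\oss_1=\oss_1\omin\oss_1$, and it sends $u_1\otimes 1\mapsto T$ and $1\otimes u_1\mapsto S$. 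The numerical-radius statement follows by the identical argument with $\oss_1^d$, $E_{12}$ and Proposition~\ref{universal num-rad contraction} in place of $\oss_1$, $u_1$ and Proposition~\ref{universal contraction}, using Theorem~\ref{mixed}(5); indeed the construction of commuting completely positive extensions carried out in the proof of Theorem~\ref{mixed}(5) already supplies exactly this.

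For the converse --- that the product map $\phi\cdot\psi$ is completely positive only when the generating operators $*$-commute, so that the correspondence is sharp --- I would pass to the ambient commutative C$^*$-algebra. Using $\oss_1\omin\oss_1\coisubset C(\mathbb T)\omin C(\mathbb T)=C(\mathbb T\times\mathbb T)$, I would extend $\phi\cdot\psi$ by Arveson's theorem to a unital completely positive $\Psi:C(\mathbb T\times\mathbb T)\to\B(\H)$ and apply Stinespring to write $\Psi=V^*\pi(\cdot)V$, with $V$ an isometry and $U=\pi(z)$, $W=\pi(w)$ commuting unitaries. The multiplicativity built into $\phi\cdot\psi$ on the four cross terms $u_{\pm 1}\otimes u_{\pm 1}$ becomes the four identities $V^*U^{\varepsilon}(I-VV^*)W^{\delta}V=0$ for $\varepsilon,\delta\in\{1,-1\}$; writing $U,W$ in $2\times 2$ block form along $V\H\oplus(V\H)^{\perp}$ and expanding, these four conditions annihilate precisely the off-diagonal products appearing in $[T,S]$ and $[T,S^*]$, forcing $T$ and $S$ to $*$-commute. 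The dual statement runs identically with $\oss_1^d\omin\oss_1^d\coisubset\M_2\omin\M_2=\M_4$ replacing $C(\mathbb T\times\mathbb T)$.

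The main obstacle is exactly this converse descent. Commutativity of the dilating unitaries $U,W$ does not in general survive compression to $V\H$: an arbitrary unital completely positive map on $\oss_1\omin\oss_1$ can send $u_1\otimes 1$ and $1\otimes u_1$ to non-commuting contractions (for instance, compress a three-point representation of $C(\mathbb T\times\mathbb T)$ to a generic two-dimensional subspace). What rescues the argument is a dimension count: all four low-degree cross terms $u_{\pm 1}\otimes u_{\pm 1}$ already lie in $\oss_1\otimes\oss_1$, so demanding that $\phi\cdot\psi$ be a genuine product imposes four multiplicativity constraints --- exactly the number needed to kill both commutators $[T,S]$ and $[T,S^*]$. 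I therefore expect the care to lie in verifying this tight bookkeeping rather than in any single hard estimate.
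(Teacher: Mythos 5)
Your realization direction is precisely the argument the paper intends (the corollary appears there without a written proof): Propositions \ref{universal contraction} and \ref{universal num-rad contraction} supply the unital completely positive maps $\phi,\psi$; $*$-commutation of $T$ and $S$ gives elementwise commutation of the ranges $\mathrm{span}\{1,T,T^*\}$ and $\mathrm{span}\{1,S,S^*\}$; the definition of $\oc$ makes $\phi\cdot\psi$ unital completely positive; and Proposition \ref{c-min} (resp.\ Theorem \ref{mixed}(5)) transfers this to $\omin$. Where you diverge is in the converse. The converse implicit in the corollary is the easy one: a product map $\phi\cdot\psi$ arising from maps with \emph{commuting ranges} automatically sends $u_1\otimes 1$ and $1\otimes u_1$ to $*$-commuting operators that are contractions (resp.\ have numerical radius at most $\tfrac12$), by the cited propositions. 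You instead prove the sharper statement that complete positivity of a product map $\phi\cdot\psi$ by itself forces $*$-commutation, and your Stinespring/block-matrix computation is in fact correct (the four cross-term identities, together with their adjoints, do annihilate both commutators $[T,S]$ and $[T,S^*]$). However, the machinery is unnecessary, and the closing ``dimension count'' is a heuristic rather than a proof: positivity alone already suffices. Indeed, if $\Phi=\phi\cdot\psi$ is positive, it is self-adjoint, as are $\phi$ and $\psi$, so for all $x,y\in\oss_1$
\[
\phi(x)^*\psi(y)^* \;=\; \Phi\bigl((x\otimes y)^*\bigr) \;=\; \Phi(x\otimes y)^* \;=\; \psi(y)^*\phi(x)^*,
\]
and replacing $x,y$ by $x^*,y^*$ shows that the ranges of $\phi$ and $\psi$ commute elementwise; in particular $TS=ST$ and $TS^*=S^*T$. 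This one-line observation replaces the Arveson extension, the Stinespring dilation, and the block bookkeeping, and it applies verbatim to $\oss_1^d$ with $E_{12}$ in place of $u_1$. In short: your proof is correct, its first half coincides with the paper's intended argument, and its second half establishes a genuine (if over-engineered) refinement that the paper does not claim.
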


%%%%%%%%%%%%%%%%%%%%%%%%%%%%%%%%%%%%%%%%%%%%%%%%%%%%%%%%%%%%%%%%%%%%%%%%%%%%%%%%%%%%%%%%%%%%%%%%%%%%%%%%%%%%%%%%%%%%%%%%%%%%%%%%%%%%%%%%%%%%%%%%%%555
%%%%%%%%%%%%%%%%%%%%%%%%%5

\section{Tsirelson's Non-Commutative $n$-Cubes}\label{tsir}

Tsirelson \cite{tsirelson1980,tsirelson1993} studied certain operator systems that can
be best thought of as the non-commutative analogues of $n$-dimensional cubes.
To describe his ideas, we introduce an $(n+1)$-dimensional operator
system that we shall call the {\em operator system of the non-commuting n-cube,} and
denote it by $NC(n).$

\begin{definition} Let
 $\mathcal G=\{h_1,\dots,h_n\}$, let
$\mathcal R=\{h_j^*=h_j,\;\|h_j\|\leq 1,\;1\leq j\leq n\}$ be a set of relations in the set $\mathcal G$, and let $\cstar(\mathcal G\vert\mathcal R)$
denote the universal unital C$^*$-algebra generated by $\mathcal G$ subject to $\mathcal R$.
The operator system
\[
NC(n)\,=\,\mbox{\rm span} \{ 1, h_1,...,h_n \}\,\subset\, \cstar(\mathcal G\vert\mathcal R)\,.
\]
is called the \emph{operator system of the non-commuting $n$-cube}.
\end{definition}

By the universal property of universal C$^*$-algebras
we deduce immediately that the spectrum of each $h_i$ in  $\cstar(\mathcal G\vert\mathcal R)$ is $[-1,1]$
and that
for any $n$ hermitian contractions $A_1,\dots,A_n$ acting on any Hilbert space $\H$
there is a unital completely positive map $\phi: NC(n) \to \B(\H)$ with $\phi(h_i) = A_i$, $i = 1,\dots,n$.

If we had demanded that in addition the $A_i$'s pairwise commute, then for
our universal operator system we could have taken the span of the constant
function and the coordinate
functions $x_i, 1 \le i \le n$, inside the C$^*$-algebra $C([-1,1]^n)$ of continuous
functions on the $n$-dimensional cube. This motivates the following definition.

\begin{definition} The \emph{operator system of the commutative $n$-cube} is the operator subsystem
$C(n)\subset C([-1,1]^n)$ given by
\[
C(n) \,=\, \mbox{\rm span} \{1, x_1,...,x_n \} \,,
\]
where $x_i$ is the $i$th coordinate function on $[-1,1]^n$.
\end{definition}

The fact that $C(n)$ is the ``universal'' operator system for $n$ commuting selfadjoint contractions is shown
in the next proposition.

\begin{proposition} If $\H$ is a Hilbert space, $A_1,..., A_n \in
B(H)$ are any {\em pairwise commuting} operators satisfying $-I \le A_i \le I,$
then there is a unital completely positive map $\Phi:C(n) \to
\B(\H)$ with $\Phi(x_i) = A_i$, $i = 1,\dots,n$.
\end{proposition}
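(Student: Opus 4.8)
The plan is to reduce the statement to the joint continuous functional calculus for commuting self-adjoint operators, producing a unital $*$-homomorphism out of $C([-1,1]^n)$ whose restriction to $C(n)$ is the map we want. The point is that the commutativity hypothesis on the $A_i$ is precisely what allows such a homomorphism to exist, whereas in the noncommutative setting of $NC(n)$ one only gets a completely positive map.

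First I would form the unital C$^*$-subalgebra $\mathcal{A}\subseteq\B(\H)$ generated by $A_1,\dots,A_n$ together with the identity. Since the $A_i$ are self-adjoint and pairwise commute, every generator commutes with every other generator and with its adjoint, so $\mathcal{A}$ is commutative. By Gelfand's theorem, $\mathcal{A}$ is $*$-isomorphic to $C(X)$, where $X$ is the maximal ideal space of $\mathcal{A}$. Evaluating the Gelfand transforms of the generators gives a continuous map $\gamma:X\to\mathbb{R}^n$, $\gamma(\omega)=(\hat A_1(\omega),\dots,\hat A_n(\omega))$; because each $A_i$ is self-adjoint with $-I\le A_i\le I$, each transform $\hat A_i$ is real-valued with range contained in $[-1,1]$, so $\gamma$ maps $X$ continuously into the cube $[-1,1]^n$.

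Next I would use $\gamma$ to pull back functions. The assignment $f\mapsto f\circ\gamma$ is a unital $*$-homomorphism $C([-1,1]^n)\to C(X)$, and composing it with the Gelfand isomorphism $C(X)\cong\mathcal{A}\subseteq\B(\H)$ yields a unital $*$-homomorphism $\pi:C([-1,1]^n)\to\B(\H)$ with $\pi(x_i)=A_i$ for each $i$, since $x_i\circ\gamma=\hat A_i$ corresponds to $A_i$ under the Gelfand isomorphism. (Equivalently, one may invoke the spectral theorem for commuting self-adjoint operators to obtain a projection-valued measure $E$ supported on $[-1,1]^n$ with $A_i=\int x_i\,dE$, and set $\pi(f)=\int f\,dE$.)

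Finally, I would define $\Phi$ to be the restriction of $\pi$ to the operator subsystem $C(n)=\mathrm{span}\{1,x_1,\dots,x_n\}$. Every unital $*$-homomorphism of C$^*$-algebras is unital completely positive, and the restriction of a unital completely positive map to an operator subsystem is again unital completely positive; hence $\Phi:C(n)\to\B(\H)$ is unital completely positive with $\Phi(x_i)=\pi(x_i)=A_i$, as required. The only substantive step is the construction of the joint functional calculus homomorphism $\pi$, which is where pairwise commutativity of the $A_i$ is essential; the passage from $\pi$ to $\Phi$ is purely formal.
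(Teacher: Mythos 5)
Your proof is correct, and it reaches the same endpoint as the paper --- a unital $*$-homomorphism $\pi : C([-1,1]^n)\to\cl B(\cl H)$ with $\pi(x_i)=A_i$, which is then restricted to $C(n)$ --- but it constructs $\pi$ by a genuinely different mechanism. You apply Gelfand duality to the commutative C$^*$-algebra generated by $I, A_1,\dots,A_n$ and pull back along the continuous map of its maximal ideal space into $[-1,1]^n$ given by the Gelfand transforms of the generators (the one point you use tacitly is spectral permanence: the range of $\hat A_i$ is the spectrum of $A_i$ computed in the subalgebra, which coincides with its spectrum in $\cl B(\cl H)$ and hence lies in $[-1,1]$). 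The paper instead identifies $C([-1,1]^n)$ with the $n$-fold maximal tensor product $C([-1,1])\omax\cdots\omax C([-1,1])$ using nuclearity of $C([-1,1])$, takes the single-variable functional-calculus representations $\pi_i$ with $\pi_i$ sending the identity function to $A_i$, and combines them via the universal property of $\omax$ for representations with commuting ranges. Your route is more elementary, needing only Gelfand theory (or, as you note parenthetically, the joint spectral theorem) and no tensor-product machinery; the paper's route is chosen to run parallel to its proof that $NC(n)$ is universal for $n$ self-adjoint contractions, where the universal property of $\cstar(*_n\mathbb{Z}_2)$ plays the role that the universal property of $\omax$ plays here, so the commutative and noncommutative cases are treated uniformly. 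In both arguments pairwise commutativity enters exactly once --- making your algebra commutative, respectively making the $\pi_i$ combinable --- and the final restriction step is identical.
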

\begin{proof}
By the nuclearity of $C([-1,1])$, we have a natural identification
of $C([-1,1]^n)$ and the $n$-fold tensor product
$C([-1,1])\otimes_{\max} \dots \otimes_{\max} C([-1,1])$.
Let $\pi_i : C([-1,1])$ $\to$ $\cl B(\cl H)$ be the $*$-representation mapping the identity function to $A_i$, $i = 1,\dots,n$.
Since the ranges of $\pi_i$, $i = 1,\dots,n$, pairwise commute, there exists a $*$-representation
$\pi : C([-1,1]^n)\to \cl B(\cl H)$ such that $\pi(x_i) = A_i$, $i = 1,\dots,n$. It remains to let $\Phi$ be the
restriction of $\pi$ to $C(n)$.
\end{proof}

Our next proposition places $NC(n)$ into the setting of group operator systems described in Section \ref{s_osdg}.

\begin{notation} Let $*_n\mathbb Z_2=\mathbb Z_2*\cdots*\mathbb Z_2$ be the free product of $n$ copies of the group
$\mathbb Z_2$ of order two.
\end{notation}

\begin{proposition}\label{p_freep}
Let $u_i= u_i^* \in \cstar(*_n\mathbb Z_2)$ be the
  generator of the $i$-th copy of $\mathbb Z_2$ in $*_n\mathbb Z_2$
  and $\frak u_n = \{u_1,\dots,u_n\}$. Then the unital linear map $\Psi:
  \mathcal S(\frak u_n) \to NC(n)$ defined by $\Psi(u_i) = h_i, 1 \le i \le n$,
  is a complete order isomorphism.
\end{proposition}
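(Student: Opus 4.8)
The plan is to prove that $\Psi$ is a complete order isomorphism by constructing unital completely positive maps in both directions and checking that they are mutually inverse. Since each $u_i$ is self-adjoint, $\mathcal S(\mathfrak u_n)=\mbox{span}\{1,u_1,\dots,u_n\}$, and $\Psi$ is the unital linear map determined by $u_i\mapsto h_i$. I would produce a unital completely positive map $\phi:NC(n)\to\mathcal S(\mathfrak u_n)$ with $\phi(h_i)=u_i$ and a unital completely positive map $\psi:\mathcal S(\mathfrak u_n)\to NC(n)$ with $\psi(u_i)=h_i$. Since these agree with $\Psi^{-1}$ and $\Psi$ on spanning sets and are linear, they are mutually inverse, which simultaneously shows that $\Psi$ is bijective and a complete order isomorphism.

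For $\phi$, I would invoke the universal property of $NC(n)$ recorded immediately before the proposition. Realising $\cstar(*_n\mathbb Z_2)$ faithfully on a Hilbert space $\H$, each generator $u_i$ is a self-adjoint contraction in $\B(\H)$, so there is a unital completely positive map $\phi:NC(n)\to\B(\H)$ with $\phi(h_i)=u_i$ for all $i$. Its range is exactly $\mathcal S(\mathfrak u_n)$, and this is the desired map.

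The reverse direction is the substantive step. Representing $NC(n)\subseteq\B(\H)$, I would dilate the self-adjoint contractions $h_i$ to self-adjoint unitaries on a common space $\cl K=\H\oplus\H$ by setting
\[
U_i\,=\,\begin{bmatrix} h_i & (1-h_i^2)^{1/2} \\ (1-h_i^2)^{1/2} & -h_i \end{bmatrix}\in\B(\cl K)\,.
\]
Each $U_i$ satisfies $U_i=U_i^*$ and $U_i^2=I$, so $u_i\mapsto U_i$ defines a unitary representation of $*_n\mathbb Z_2$; by the universal property of the full group C$^*$-algebra it extends to a unital $*$-homomorphism $\pi:\cstar(*_n\mathbb Z_2)\to\B(\cl K)$. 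Let $V:\H\to\cl K$ be the isometry $V\xi=\xi\oplus 0$. Then $\psi(x):=V^*\pi(x)V$ is unital and completely positive, being the compression of a $*$-homomorphism along an isometry, and $\psi(u_i)=V^*U_iV=h_i$. Restricting $\psi$ to $\mathcal S(\mathfrak u_n)$ gives the required map, whose range is $NC(n)$.

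The main obstacle is this dilation step, and the point worth emphasising is why it suffices: I need all the $h_i$ dilated to self-adjoint unitaries on one and the same space, but I do \emph{not} need the dilations to respect any products, since $\mathcal S(\mathfrak u_n)$ is spanned by $1$ together with the first-order elements $u_i$ and the compression is applied only at this linear level. The uniform $2\times2$ self-adjoint unitary dilation handles every generator at once, regardless of how the $h_i$ interact, which is exactly what makes the passage from the non-commuting cube to the free product of copies of $\mathbb Z_2$ work. With $\phi$ and $\psi$ mutually inverse unital completely positive maps, $\Psi=\psi$ is a complete order isomorphism.
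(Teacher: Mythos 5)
Your proposal is correct and follows essentially the same route as the paper's proof: one direction comes from the universal property of $NC(n)$ applied to the self-adjoint unitary generators $u_i$, and the other from the Halmos dilation of the $h_i$ to self-adjoint unitaries, the universal property of $\cstar(*_n\mathbb Z_2)$, and compression back to the first summand. The only cosmetic difference is that the paper phrases the dilation step for arbitrary self-adjoint contractions $A_i$ rather than specialising to the $h_i$, and leaves the ``mutually inverse'' bookkeeping implicit where you spell it out.
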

\begin{proof} Since each $u_i$ is a self-adjoint unitary, it has
  spectrum $\{-1,1\}$ and hence $-1 \le u_i \le 1.$ Hence, by the
  universal property of $NC(n)$, the map $\Psi^{-1}$
  is completely positive. It hence suffices to
  show that $\Psi$ is completely positive. To prove this, it
  is enough to show that if $\H$ is any Hilbert space and $A_i \in
  \B(\H)$ satisfy $A_i = A_i^*$ and $-I \le A_i \le I,$ $1 \le i \le n$,
  then there is a unital completely positive map $\gamma: \mathcal
  S(\frak u_n) \to \B(\H)$ with $\gamma(u_i) = A_i$, $i = 1,\dots,n$.

With $A_i$ as above, let
\[ U_i \,=\, \left[\begin{matrix} A_i & (I - A_i^2)^{1/2}\\ (I - A_i^2)^{1/2}& -
  A_i \end{matrix}\right] \]
be the Halmos dilation of $A_i$. Then $U_i$ is a self-adjoint
unitary for each $i$. By the universal property of $\cstar(*_n\mathbb Z_2)$,
there is a unital $*$-homomorphism $\pi : \cstar(*_n\mathbb Z_2) \to \B(\H \oplus
\H)$ with $\pi(u_i) = U_i, 1 \le i \le n.$ Composing $\pi$ with the
compression onto the first copy of $\H$ yields a unital completely positive map
$\gamma: \cstar(*_n\mathbb Z_2) \to \B(\H)$ with $\gamma(u_i) = A_i,$ which completes
the proof.
\end{proof}

Propositions \ref{cstare of oss}  and \ref{p_freep} give the following immediate corollary.

\begin{corollary}\label{cenv nc cubes}
Up to a $*$-isomorphism, we have that $\cstare\left( NC(n)\right)=\cstar(*_n\mathbb Z_2)$.
\end{corollary}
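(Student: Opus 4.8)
The plan is to obtain the corollary by chaining the two cited propositions through the single additional observation that the C$^*$-envelope is an invariant of the complete order isomorphism class of an operator system. First I would invoke Proposition \ref{p_freep}, which provides a complete order isomorphism $\Psi : \oss(\frak u_n) \to NC(n)$, where $\frak u_n = \{u_1,\dots,u_n\}$ is the canonical generating set of $*_n\mathbb Z_2$ consisting of the self-adjoint unitary generators of the $n$ copies of $\mathbb Z_2$. Thus $NC(n)$ is, as an operator system, nothing other than the group operator system $\oss(\frak u_n)$ associated with the discrete group $*_n\mathbb Z_2$.

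The key step is then to transport the C$^*$-envelope across $\Psi$. Because $\cstare$ is characterized by the universal property recalled earlier in Section \ref{s_osdg} (it is Hamana's minimal C$^*$-cover), completely order isomorphic operator systems have $*$-isomorphic C$^*$-envelopes; more precisely, a complete order isomorphism $\Psi : \oss(\frak u_n) \to NC(n)$ extends uniquely to a $*$-isomorphism $\cstare(\oss(\frak u_n)) \to \cstare(NC(n))$ that restricts to $\Psi$ on the operator systems. Hence
\[
\cstare(NC(n)) \,\cong\, \cstare\left(\oss(\frak u_n)\right).
\]

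To finish, I would apply Proposition \ref{cstare of oss} to the discrete group $\lgG = *_n\mathbb Z_2$ with the generating set $\frak u_n$. That proposition yields, up to a $*$-isomorphism fixing the elements of $\oss(\frak u_n)$, the identity $\cstare(\oss(\frak u_n)) = \cstar(*_n\mathbb Z_2)$. Combining this with the display above gives $\cstare(NC(n)) \cong \cstar(*_n\mathbb Z_2)$, as desired.

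I do not expect any genuine obstacle here: the content of the corollary is entirely carried by Propositions \ref{cstare of oss} and \ref{p_freep}, and the only thing to verify is the invariance of the C$^*$-envelope under complete order isomorphism, which is immediate from its universal property. The one point I would state carefully is that the hypotheses of Proposition \ref{cstare of oss} apply verbatim: $*_n\mathbb Z_2$ is a discrete group and $\frak u_n$ is a generating set, so $NC(n) = \oss(\frak u_n)$ falls squarely within the group operator system framework.
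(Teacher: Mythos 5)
Your proof is correct and is exactly the paper's argument: the paper states the corollary as an immediate consequence of Propositions \ref{cstare of oss} and \ref{p_freep}, which is precisely the chain you describe, with the (unital) complete order invariance of the C$^*$-envelope supplying the glue. Your write-up merely makes explicit the routine step the paper leaves implicit.
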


There is at least one other representation of $NC(n)$ worth mentioning.

\begin{proposition}\label{NCinclusion}
Let $\{ u_1,\dots,u_n \}$ be the generators of $\oss_n.$
Then the linear map $\gamma: NC(n) \to \oss_n$ defined by $\gamma(h_i) = (u_i+u_i^*)/2$ is a complete order isomorphism onto its range,
which possesses a completely positive left inverse.
\end{proposition}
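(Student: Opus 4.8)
The plan is to realize $\gamma$ as one half of a pair of unital completely positive maps, the other being an explicit left inverse, and then to extract the complete order isomorphism formally from these two maps. The two ingredients are the universal properties already available: that of $NC(n)$ for $n$ hermitian contractions (stated after its definition) and that of $\oss_n$ for $n$ arbitrary contractions (used in the proof of Theorem \ref{ocomax} via \cite[Proposition 9.7]{kavruk--paulsen--todorov--tomforde2010}, and of the same flavour as Proposition \ref{universal contraction}).

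First I would check that $\gamma$ is unital and completely positive. For each $i$ the element $(u_i+u_i^*)/2$ is a self-adjoint contraction in $\cstar(\fn)$, since $u_i$ is unitary. Hence, by the universal property of $NC(n)$, there is a unital completely positive map $NC(n)\to\cstar(\fn)$ sending $h_i$ to $(u_i+u_i^*)/2$. Because its image lies in $\oss_n$ and $\oss_n\coisubset\cstar(\fn)$, this map is exactly $\gamma$, now viewed as landing in $\oss_n$.

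Next I would construct the left inverse. I define $\phi:\oss_n\to NC(n)$ on the canonical basis $\{1,u_1,u_1^*,\dots,u_n,u_n^*\}$ by $\phi(1)=1$ and $\phi(u_i)=\phi(u_i^*)=h_i$. To see that $\phi$ is completely positive, note that each $h_i$ is a contraction in $\cstar(\mathcal G\vert\mathcal R)$, so the universal property of $\oss_n$ supplies a unital completely positive map $\oss_n\to\cstar(\mathcal G\vert\mathcal R)$ sending $u_i$ to $h_i$. Being unital and completely positive it is self-adjoint, hence sends $u_i^*$ to $h_i^*=h_i$; its image therefore lies in $NC(n)$ and it coincides with $\phi$. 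A direct computation gives $\phi(\gamma(h_i))=(h_i+h_i)/2=h_i$ and $\phi(\gamma(1))=1$, so $\phi\circ\gamma={\rm id}_{NC(n)}$.

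Finally I would assemble the conclusion. The relation $\phi\circ\gamma={\rm id}$ forces $\gamma$ to be injective, hence a linear isomorphism onto its range $\cl R_\gamma=\gamma(NC(n))\subseteq\oss_n$. On $\cl R_\gamma$ the inverse of $\gamma$ is precisely the restriction $\phi|_{\cl R_\gamma}$, which is completely positive as the restriction of a completely positive map to an operator subsystem. Combined with the complete positivity of $\gamma$, this shows $\gamma$ is a complete order isomorphism onto its range, and $\phi$ is the advertised completely positive left inverse. The only step requiring genuine care is the universal property of $\oss_n$ for $n$ contractions invoked for $\phi$; if one prefers a self-contained argument, it can be produced exactly as in Proposition \ref{universal contraction}, dilating each $h_i$ by its Halmos unitary on the common space $\H\oplus\H$ and then applying the universality of $\cstar(\fn)$ for $n$ unitaries followed by compression.
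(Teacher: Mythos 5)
Your proposal is correct and follows essentially the same route as the paper: both show $\gamma$ is unital completely positive via the universal property of $NC(n)$ for hermitian contractions, then use the universal property of $\oss_n$ for contractions to produce the unital completely positive left inverse sending $u_i\mapsto h_i$, and conclude that $\gamma$ is a complete order isomorphism onto its range. Your extra remarks (self-adjointness of unital completely positive maps fixing $\phi(u_i^*)=h_i$, and the optional Halmos-dilation argument) only spell out details the paper leaves implicit.
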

\begin{proof}
Since $-1 \le (u_i+u_i^*)/2 \le 1,$ the universal property of $NC(n)$ implies that the map $\gamma$ is unital and completely positive.
Conversely, since $\|h_i\| \le 1,$ the universal property of $\cl S_n$ implies that
there is a unital completely positive map $\psi: \oss_n \to NC(n)$ with $\psi(u_i) = h_i$, $i = 1,\dots,n$.
But then $\psi((u_i+u_i^*)/2) = h_i$, $i = 1,\dots,n$, and so $\psi$ is a left inverse of $\gamma.$ It follows that
$\gamma$ is a complete order isomorphism onto its range.
\end{proof}

It is easy to show that the spectrum of $(u_i+u_i^*)/2$ in $\cstar(\bb F_n)$ is $[-1,1]$.
However, by Corollary~5.6, the spectrum of $h_i$ in $\cstare(NC(n))$ is $\{ -1, 1\}$.
Hence, the C$^*$-algebra generated by the image of $NC(n)$ in $\cstar(\bb F_n)$ is much larger than its enveloping C$^*$-algebra.

\begin{proposition} Let $\mathbb Z_2 \oplus \cdots \oplus \mathbb Z_2$ be the direct sum of $n$ copies of the group of
  order two, let $v_i= v_i^* \in \cstar(\mathbb Z_2 \oplus
  \cdots \oplus \mathbb Z_2)$ be the
  generator of the $i$-th copy of $\mathbb Z_2$ and let $\mathcal S(\frak v_n) =
  \mbox{\rm span} \{1, v_i: 1 \le i \le n \}.$ Then the unital map $\Psi:
  \mathcal S(\frak v_n) \to C(n)$ defined by $\Psi(v_i) = x_i, 1 \le i \le n$,
  is a complete order isomorphism.
\end{proposition}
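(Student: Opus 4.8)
The plan is to realise both operator systems concretely as function systems and then reduce the whole statement to a fact about affine functions on the cube. Since $\mathbb Z_2 \oplus \cdots \oplus \mathbb Z_2$ is abelian, its full group C$^*$-algebra is commutative, and via the Fourier transform one identifies $\cstar(\mathbb Z_2 \oplus \cdots \oplus \mathbb Z_2)$ with $C(\{-1,1\}^n)$ in such a way that each generator $v_i$ becomes the $i$-th coordinate function $\epsilon \mapsto \epsilon_i$ on the vertex set $\{-1,1\}^n$. Thus $\mathcal S(\frak v_n)$ is completely order isomorphic to $\mathrm{span}\{1, x_1|_{\{-1,1\}^n},\dots,x_n|_{\{-1,1\}^n}\}\subseteq C(\{-1,1\}^n)$, while by definition $C(n)=\mathrm{span}\{1,x_1,\dots,x_n\}\subseteq C([-1,1]^n)$, and under these identifications $\Psi$ is exactly the map matching up the two spans of coordinate functions. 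The key structural point is that both ambient C$^*$-algebras are commutative, so by the classical fact that a positive map into (an operator subsystem of) an abelian C$^*$-algebra is automatically completely positive \cite{Paulsen-book}, it suffices to prove that $\Psi$ and $\Psi^{-1}$ are merely order positive.

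For $\Psi^{-1}$ this is immediate: a self-adjoint element of $C(n)_+$ is a real affine function nonnegative on all of $[-1,1]^n$, hence in particular nonnegative at every vertex, so its restriction to $\{-1,1\}^n$ lies in $\mathcal S(\frak v_n)_+$; equivalently, $\Psi^{-1}$ is the restriction to $C(n)$ of the surjective $*$-homomorphism $C([-1,1]^n)\to C(\{-1,1\}^n)$ given by restriction of functions to the vertex set, and is therefore completely positive. For $\Psi$ one argues in the opposite direction: an element of $\mathcal S(\frak v_n)_+$ corresponds to a real affine function $a_0+\sum_i a_i\epsilon_i$ that is nonnegative at every point of $\{-1,1\}^n$, and $\Psi$ sends it to the affine function $a_0+\sum_i a_i x_i$ on $[-1,1]^n$. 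Since an affine function attains its minimum over a polytope at an extreme point, and the extreme points of $[-1,1]^n$ are precisely the vertices $\{-1,1\}^n$, nonnegativity on the vertices forces nonnegativity on the whole cube, whence $\Psi(g)\in C(n)_+$. As $\Psi$ and $\Psi^{-1}$ are mutually inverse positive maps between function systems, both are completely positive, and so $\Psi$ is a complete order isomorphism.

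The reason I would not simply transcribe the proof of Proposition \ref{p_freep} is that its dilation step does not respect commutativity: replacing each self-adjoint contraction $A_i$ by its Halmos dilation $\left[\begin{smallmatrix} A_i & (I-A_i^2)^{1/2}\\ (I-A_i^2)^{1/2} & -A_i\end{smallmatrix}\right]$ yields self-adjoint unitaries whose commutators have off-diagonal entries $A_i(I-A_j^2)^{1/2}-(I-A_i^2)^{1/2}A_j$, which do not vanish in general even when the $A_i$ pairwise commute; hence one cannot invoke the universal property of an abelian group C$^*$-algebra directly. The substantive content is therefore the convex-geometric observation that nonnegativity of an affine function on the vertices of the cube propagates to the whole cube, while commutativity of the codomain is exactly what promotes the resulting scalar positivity to complete positivity for free. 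Accordingly, I expect the only step requiring genuine care to be the verification that the Fourier/Gelfand identification carries $\mathcal S(\frak v_n)$ onto the span of the coordinate functions on $\{-1,1\}^n$, so that $\Psi$ really is the claimed map; the remaining steps are then routine.
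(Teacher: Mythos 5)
Your proof is correct, and it takes a genuinely different route from the paper's. The paper stays abstract: $\Psi^{-1}$ is completely positive by the universal property of $C(n)$ (the universal system for pairwise commuting self-adjoint contractions), while $\Psi$ is obtained by dilating the coordinate functions $x_i$ to commuting self-adjoint unitaries, invoking the universal property of $\cstar(\mathbb Z_2\oplus\cdots\oplus\mathbb Z_2)$ to get a $*$-homomorphism, and compressing. You instead work concretely: Gelfand--Fourier duality identifies $\cstar(\mathbb Z_2\oplus\cdots\oplus\mathbb Z_2)$ with $C(\{-1,1\}^n)$, carrying $v_i$ to the $i$-th coordinate on the vertex set, so both systems sit inside commutative C$^*$-algebras, positivity upgrades to complete positivity automatically, and the whole statement reduces to the convexity fact that an affine function nonnegative on the vertices of the cube is nonnegative on the cube. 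Beyond its elementarity, your approach buys something more important: it sidesteps a real defect in the paper's argument that your closing remark correctly identifies. The observation that Halmos dilations of commuting contractions need not commute applies verbatim to the paper's own proof of this very proposition, which asserts that the dilations $V_i$ of the $x_i$ are ``pairwise commuting self-adjoint unitaries in $\M_2(C([0,1]^n))$.'' They are not: when the compressed operators commute, the commutator $[V_i,V_j]$ is purely off-diagonal with entries $\pm 2\bigl(x_i\sqrt{1-x_j^2}-x_j\sqrt{1-x_i^2}\bigr)$, which is not identically zero; in fact no commuting self-adjoint unitary dilations of the $x_i$ can exist in $\cl M_2$ at all, since at a point where $x_i=1/2$ and $x_j=0$ two commuting self-adjoint unitaries in $\cl M_2$, neither equal to $\pm I$, are simultaneously diagonalizable with spectrum $\{1,-1\}$ and hence coincide up to sign, so their $(1,1)$ entries cannot be $1/2$ and $0$. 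The paper's dilation argument is repairable by the standard tensor trick: place the $i$-th Halmos dilation in the $i$-th factor of $\cl M_2\otimes\cdots\otimes\cl M_2\otimes C([-1,1]^n)\cong \cl M_{2^n}(C([-1,1]^n))$; these do commute, and compression to the $(1,1,\dots,1)$ corner still returns the $x_i$. The repaired argument has the virtue of running exactly parallel to Proposition \ref{p_freep}, using only universal properties and no special geometry of the cube, but your vertex/convexity argument avoids dilations entirely and, as written, is the more solid of the two.
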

\begin{proof}
The proof proceeds as that of the previous result. Since
the elements $v_i$ are selfadjoint, pairwise commuting and satisfy $-1\le v_i \le 1,$ we have that
  $\Psi^{-1}$ is completely positive. To see that $\Psi$ is completely
  positive, use the Halmos dilation on the coordinate functions $x_i$
  to obtain {\em pairwise commuting} self-adjoint unitaries $V_i \in \M_2(C([0,1]^n)).$
The universal property of $\cstar(\mathbb Z_2 \oplus \cdots
\oplus \mathbb Z_2)$ shows that there exists a $*$-homomorphism $\pi: \cstar(\mathbb Z_2 \oplus \cdots
\oplus \mathbb Z_2) \to \M_2(C([0,1]^n))$ with $\pi(v_i) = V_i$, $i = 1,\dots,n$. Since
$\Psi$ is a compression of $\pi$, we have that $\Psi$ is completely positive.
\end{proof}

%\noindent {\bf Remark }
In \cite{kavruk2012}, the notion of a \emph{coproduct} of two operator systems was introduced.
If $\cl S$ and $\cl T$ are operator systems, the coproduct operator system $\cl S\oplus_1\cl T$ is
characterized by the following universal property: whenever $\cl U$ is an operator system and
$\phi : \cl S\to \cl U$ and $\psi : \cl T\to\cl U$ are unital completely positive maps, there exists a unique
unital completely positive map $\theta : \cl S\oplus_1 \cl T\to \cl U$ such that $\theta(\iota_{\cl S}(x)) = \phi(x)$
for all $x\in \cl S$ and $\theta(\iota_{\cl T}(y)) = \psi(y)$ for all $y\in \cl T$, where $\iota_{\cl S}$ and $\iota_{\cl T}$ are
the canonical inclusions of $\cl S$ and $\cl T$, respectively, into $\cl S\oplus_1\cl T$.
It can easily be checked that the coproduct is an associative operation.

On the other hand, the C$^*$-algebra $\cstar(*_n\mathbb Z_2)$ is canonically $*$-isomorphic to
the free product (involving $n$ terms) $\cstar(\mathbb{Z}_2)\ast\cdots\ast \cstar(\mathbb{Z}_2)$.
Since $\cstar(\mathbb{Z}_2)$ is $*$-isomorphic to $\ell^{\infty}_2$ via the Fourier transform,
it now follows from \cite[Proposition 4.3]{kavruk2012} and Proposition \ref{p_freep} that
$NC(n)$ is canonically order isomorphic to the coproduct (involving $n$ terms)
$\ell^{\infty}_2\oplus_1 \cdots\oplus_1 \ell^{\infty}_2$.

Another consequence of \cite[Theorem 4.8]{kavruk2012} is that $NC(2)$ is completely order isomorphic to
the quotient operator system $\ell^{\infty}/{{\rm Span}\{(1,1,-1,-1)\}}$. More precisely, let
$p_i =(1+u_i)/2, q_i = (1-u_i)/2,$ so that $p_i$ and $q_i$ are the spectral
projections for the self-adjoint unitary $u_i$  corresponding to the
eigenvalues $1$ and $-1,$ respectively, $i = 1,2$.
We have the following fact.

\begin{proposition}[Kavruk]\label{p_qk}
The linear map $\gamma: \ell^{\infty}_4 \to \mathcal
  S(\mathfrak u_2)$ defined by
\[\gamma((a_1,a_2,a_3,a_4)) = 1/2[a_1p_1+a_2q_1+a_3p_2+a_4q_2]\]
is a complete quotient map.
\end{proposition}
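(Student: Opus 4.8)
The plan is to verify the hypotheses of Proposition~\ref{quo map criterion}, so that the whole proof reduces to a lifting statement for strictly positive matrices. First I would record the easy structural facts about $\gamma$. It is unital, since $\gamma(1,1,1,1) = \tfrac12(p_1+q_1+p_2+q_2) = 1$, and it is surjective, since $2\gamma(1,-1,0,0) = u_1$, $2\gamma(0,0,1,-1) = u_2$, and $\gamma(1,1,1,1)=1$ already span $\oss(\mathfrak u_2) = \mathrm{span}\{1,u_1,u_2\}$. Because $\ell^\infty_4$ is a commutative C$^*$-algebra and $\gamma$ carries a positive tuple to a positive combination of the projections $p_i,q_i\in\oss(\mathfrak u_2)$, the map $\gamma$ is positive, hence completely positive by the standard fact that positive maps out of a commutative C$^*$-algebra are completely positive \cite{Paulsen-book}. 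Solving $\gamma(a_1,\dots,a_4)=0$ gives $a_1=a_2$, $a_3=a_4$ and $a_1+a_3=0$, so $\ker\gamma = \mathrm{Span}\{(1,1,-1,-1)\}$, matching the kernel in the identification $NC(2)\cong \ell^\infty_4/\mathrm{Span}\{(1,1,-1,-1)\}$ recorded above.

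Next I would set up the lifting. Fix $p$ and a strictly positive $Y\in\M_p(\oss(\mathfrak u_2))$, written $Y = Y_0\otimes 1 + Y_1\otimes u_1 + Y_2\otimes u_2$ with self-adjoint $Y_0,Y_1,Y_2\in\M_p$. Identifying $\M_p(\ell^\infty_4)$ with $\M_p\oplus\M_p\oplus\M_p\oplus\M_p$, a general element $X=(X_1,X_2,X_3,X_4)$ is strictly positive exactly when each $X_i$ is. Expanding $p_i=(1+u_i)/2$, $q_i=(1-u_i)/2$ gives
\[
\gamma^{(p)}(X) = \tfrac14(X_1+X_2+X_3+X_4)\otimes 1 + \tfrac14(X_1-X_2)\otimes u_1 + \tfrac14(X_3-X_4)\otimes u_2,
\]
so by the linear independence of $1,u_1,u_2$ the equation $\gamma^{(p)}(X)=Y$ is equivalent to $X_1-X_2 = 4Y_1$, $X_3-X_4 = 4Y_2$ and $X_1+X_2+X_3+X_4 = 4Y_0$. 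Setting $A = \tfrac12(X_1+X_2)$ and $B = 2Y_0-A = \tfrac12(X_3+X_4)$ turns this into $X_1 = A+2Y_1$, $X_2 = A-2Y_1$, $X_3 = B+2Y_2$, $X_4 = B-2Y_2$. Hence a strictly positive lift exists if and only if there is a self-adjoint $A\in\M_p$ with
\[
-2Y_1 < A, \quad 2Y_1 < A, \quad A < 2Y_0-2Y_2, \quad A < 2Y_0+2Y_2,
\]
equivalently with both block matrices $\left[\begin{smallmatrix} A & 2Y_1 \\ 2Y_1 & A\end{smallmatrix}\right]$ and $\left[\begin{smallmatrix} 2Y_0-A & 2Y_2 \\ 2Y_2 & 2Y_0-A\end{smallmatrix}\right]$ strictly positive.

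The hard part will be producing this interpolating $A$ from the strict positivity of $Y$. The naive conditions obtained by testing $Y$ against \emph{scalar} self-adjoint contractions are only the four inequalities $Y_0\pm Y_1\pm Y_2>0$, and these are genuinely weaker than the existence of $A$: already for $p=2$, with $Y_0=cI$, $Y_1=\mathrm{diag}(a,-a)$, $Y_2=\left[\begin{smallmatrix}0&b\\b&0\end{smallmatrix}\right]$, the four inequalities hold for $\sqrt{a^2+b^2}<c$ whereas $A$ exists only for $a+b<c$. Thus the argument must use the full strength of positivity in $NC(2)\cong\oss(\mathfrak u_2)$, namely positivity of $Y_0\otimes I + Y_1\otimes A_1 + Y_2\otimes A_2$ against all \emph{noncommuting} self-adjoint contractions $A_1,A_2$ (equivalently, by the two-projection description of pairs of self-adjoint unitaries, against the $2\times2$ family $A_1=\sigma_z$, $A_2=\cos\theta\,\sigma_z+\sin\theta\,\sigma_x$). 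I would establish the interpolation by a separation argument: if no such $A$ existed, the disjoint closed convex sets $\{A:A\ge\pm2Y_1\}$ and $\{A:A\le 2Y_0\mp2Y_2\}$ could be separated by a self-adjoint functional $\mathrm{tr}(T\,\cdot\,)$, whose positivity $T\ge0$ is forced by the recession cones; computing the support functions of the two sets (each an intersection of two shifted positive cones) and writing the optimal perturbations as $T^{1/2}A_iT^{1/2}$ with $\|A_i\|\le1$ would convert the separating inequality into a failure of strict positivity of $Y_0\otimes I+Y_1\otimes A_1+Y_2\otimes A_2$, a contradiction.

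Finally, since $\gamma$ is a completely positive surjection and every strictly positive element of $\M_p(\oss(\mathfrak u_2))$ lifts, Proposition~\ref{quo map criterion} yields that $\gamma$ is a complete quotient map. Alternatively, the same conclusion is immediate from the identification $NC(2)\cong\ell^\infty_4/\mathrm{Span}\{(1,1,-1,-1)\}$ noted above together with Proposition~\ref{p_freep}: the induced map $\dot\gamma$ on $\ell^\infty_4/\ker\gamma$ is precisely the asserted complete order isomorphism, so $\gamma$ is a complete quotient map. Taking this second route, the only thing left to check is that the abstract isomorphism is realized by $\gamma$, which sidesteps the interpolation entirely; I expect the first route to be the more instructive but the genuinely delicate one.
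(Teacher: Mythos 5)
Your first route is correct, and the comparison here is necessarily one‑sided: the paper gives no proof of Proposition \ref{p_qk} at all — it is attributed to Kavruk and quoted as a consequence of \cite[Theorem 4.8]{kavruk2012} — so your argument is a genuine, self‑contained alternative to that citation. Your reductions are all right: $\gamma$ is unital, surjective and completely positive (positivity suffices out of the commutative C$^*$-algebra $\ell^{\infty}_4$), $\ker\gamma = \mathrm{Span}\{(1,1,-1,-1)\}$, and by Proposition \ref{quo map criterion} everything comes down to producing a self-adjoint $A$ with $\pm 2Y_1 < A$ and $A < 2Y_0 \mp 2Y_2$; moreover your $2\times 2$ example correctly shows that the four scalar inequalities $Y_0 \pm Y_1 \pm Y_2 > 0$ (which is all that one-dimensional, i.e.\ commuting, representations detect) are strictly weaker, so an argument exploiting noncommuting contractions is indeed forced. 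Your separation sketch is completable, but two steps should be written out. First, the support-function computation is a semidefinite-programming duality with attainment (Slater's condition holds, both feasible sets having interior points), yielding decompositions $S_1+S_2 = R_1+R_2 = T$ with $S_i,R_i\geq 0$, equivalently self-adjoint $D_1,D_2$ with $-T\leq D_i\leq T$ and $\mathrm{Tr}(TY_0 + D_1Y_1 + D_2Y_2)\leq 0$ after adjusting signs. Second, having written $D_i = T^{1/2}A_iT^{1/2}$ with $A_i=A_i^*$, $\|A_i\|\leq 1$, the contradiction with strict positivity of $Y$ needs the standard vector-state realization: with $\eta = \sum_i e_i\otimes e_i$ and $\eta_T = (I\otimes (T^t)^{1/2})\eta$ one checks that $\langle (Y_0\otimes I + Y_1\otimes A_1^t + Y_2\otimes A_2^t)\eta_T,\eta_T\rangle = \mathrm{Tr}(TY_0)+\mathrm{Tr}(D_1Y_1)+\mathrm{Tr}(D_2Y_2) \leq 0$, whereas strict positivity of $Y$ in $\cl M_p(NC(2))$ forces this quantity to be at least $\delta\,\mathrm{Tr}(T) > 0$, since $A_1^t,A_2^t$ are self-adjoint contractions; note these act on $\mathbb{C}^p$ rather than $\mathbb{C}^2$, which is fine because positivity in $\cl M_p(NC(2))$ is tested against self-adjoint contractions on arbitrary Hilbert spaces (Proposition \ref{p_freep} together with the universal property of $NC(2)$). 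By contrast, your ``alternative'' second route should be discarded as circular: the identification $NC(2)\cong \ell^{\infty}_4/\mathrm{Span}\{(1,1,-1,-1)\}$ quoted just before the proposition is exactly the abstract form of the statement being proved, and knowing that \emph{some} complete order isomorphism exists does not show that the particular unital completely positive bijection $\dot\gamma$ is one. What your direct proof buys over the paper's citation is elementarity — finite-dimensional convex duality plus the dilation trick — at the cost of the generality of the machinery of \cite{kavruk2012}, which produces such quotient descriptions systematically.
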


The following corollary describes strict positivity in $\M_n(NC(2))$.

\begin{corollary} Let $C_0, C_1, C_2 \in M_n$ be self-adjoint. Then
  $C_0 \otimes 1 + C_1 \otimes h_1 + C_2 \otimes h_2$ is strictly positive in $\M_n(NC(2))$
  if and only if there exists $A \in \M_n$ such that
$(C_0/2 \pm C_1  + A)$ and $(C_0/2 \pm C_2  - A) $ are strictly positive in $\M_n.$
\end{corollary}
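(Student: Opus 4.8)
The plan is to transport the question to the group operator system $\mathcal S(\mathfrak u_2)$ and then exploit the complete quotient map of Proposition \ref{p_qk}. By Proposition \ref{p_freep} the map $\Psi\colon \mathcal S(\mathfrak u_2)\to NC(2)$ sending $u_i$ to $h_i$ is a unital complete order isomorphism, so $C_0\otimes 1 + C_1\otimes h_1 + C_2\otimes h_2$ is strictly positive in $\M_n(NC(2))$ if and only if $Y := C_0\otimes 1 + C_1\otimes u_1 + C_2\otimes u_2$ is strictly positive in $\M_n(\mathcal S(\mathfrak u_2))$. I would work with $Y$ from here on.

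Next I would bring in the map $\gamma\colon \ell^{\infty}_4\to \mathcal S(\mathfrak u_2)$ of Proposition \ref{p_qk}, first noting that it is unital since $\gamma(1,1,1,1)=\tfrac12(p_1+q_1+p_2+q_2)=1$. Because $\gamma$ is a complete quotient map, Proposition \ref{quo map criterion} guarantees that every strictly positive element of $\M_n(\mathcal S(\mathfrak u_2))$ has a strictly positive preimage in $\M_n(\ell^{\infty}_4)$; conversely, being unital and completely positive, $\gamma^{(n)}$ carries strictly positive elements to strictly positive elements. Hence $Y$ is strictly positive if and only if $Y=\gamma^{(n)}(X)$ for some strictly positive $X\in\M_n(\ell^{\infty}_4)$. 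This reduces everything to a finite-dimensional lifting problem, since $\M_n(\ell^{\infty}_4)$ is just four copies of $\M_n$ and its strictly positive elements are exactly the $4$-tuples $(D_1,D_2,D_3,D_4)$ of positive definite matrices.

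The computational heart of the argument is then to write out $\gamma^{(n)}(X)$ explicitly. Substituting $p_i=(1+u_i)/2$ and $q_i=(1-u_i)/2$ and collecting the coefficients of $1$, $u_1$, $u_2$, the equation $\gamma^{(n)}(X)=Y$ becomes, after absorbing the scalar factor coming from $\gamma$ into the $D_i$, the linear system $D_1+D_2+D_3+D_4=C_0$, $D_1-D_2=C_1$, and $D_3-D_4=C_2$. The key observation that produces the single matrix $A$ in the statement is that this system is parametrized by one self-adjoint slack matrix: setting $D_1+D_2=C_0/2+A$ and $D_3+D_4=C_0/2-A$ automatically enforces $D_1+D_2+D_3+D_4=C_0$, and then $D_1,D_2,D_3,D_4$ are uniquely determined, their positive definiteness being equivalent (up to the harmless factor $1/2$) to the strict positivity of $C_0/2+C_1+A$, $C_0/2-C_1+A$, $C_0/2+C_2-A$, and $C_0/2-C_2-A$. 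This is precisely the condition that $(C_0/2\pm C_1+A)$ and $(C_0/2\pm C_2-A)$ be strictly positive.

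I do not anticipate a serious obstacle: once the reduction via Propositions \ref{p_freep}, \ref{p_qk}, and \ref{quo map criterion} is in place, the remainder is linear algebra. The only points requiring care are the bookkeeping of the scalar factors in $\gamma$ (so that $C_0/2$, rather than some other multiple of $C_0$, appears), and the observation that the slack matrix $A$ is automatically self-adjoint, which follows because $C_0$ and $D_1+D_2$ are self-adjoint. The main conceptual step is recognizing that the lifting criterion of Proposition \ref{quo map criterion} converts the abstract notion of strict positivity in $NC(2)$ into the concrete, checkable scalar-matrix system above.
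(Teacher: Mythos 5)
Your proof is correct and takes essentially the same route as the paper: both pass from $NC(2)$ to $\mathcal{S}(\mathfrak{u}_2)$ via Proposition \ref{p_freep}, then lift strict positivity through the complete quotient map $\gamma$ of Proposition \ref{p_qk}, and finally parametrize the strictly positive preimages in $\M_n(\ell^{\infty}_4)$ by the single slack matrix $A$. The only difference is presentational — the paper rewrites the element directly in the $p_1,q_1,p_2,q_2$ coordinates, whereas you solve the equivalent linear system in the basis $\{1,u_1,u_2\}$.
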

\begin{proof}
By Proposition \ref{p_freep}, $NC(2) = \mathcal S(\mathfrak u_2)$. Letting $h_i =
  u_i,$ $i = 1,2$, we have by Proposition \ref{p_qk}
  that  $C_0 \otimes 1 + C_1 \otimes h_1 + C_2 \otimes h_2 $ is strictly positive  in $\M_n(NC(2))$
  if and only if
\begin{multline*} C_0 \otimes 1/2[p_1+q_1+p_2+q_2] + C_1 \otimes (p_1
  - q_1) + C_2 \otimes (p_2 - q_2) =\\ (C_0/2 +C_1) \otimes p_1 + (C_0/2 - C_1) \otimes q_1
  + (C_0/2 +C_2) \otimes p_2 +(C_0/2 - C_2) \otimes q_2
 \end{multline*}
 is strictly positive.
On the other hand, the latter term is strictly positive if and only if it has a
strictly positive pre-image in $\M_n(\ell^{\infty}_4).$ But any
pre-image must be of the described form for some $A$ in $\M_n.$
 \end{proof}

Recall that the (matrix ordered) dual of a finite dimensional operator system is again an operator system. Since a map
$\phi: \ell^{\infty}_n \to \cl M_p$ is completely positive if and only if $\phi(e_i) \ge 0$ for all $i,$ we see that the map 
that identifies $\phi$ with $(\phi(e_1),\dots,\phi(e_n)) \in \cl M_p(\ell^{\infty}_n)$ defines a complete order isomorphism 
between $(\ell^{\infty}_n)^d$ and $\ell^{\infty}_n.$ This is the context for the following result.

\begin{proposition}\label{NC(2)dual} The matrix ordered dual $NC(2)^d$ of $NC(2)$ is completely order isomorphic to the operator subsystem
\[\cl V = \{(a,b,c,d): a+b = c+d \} \subset \ell^{\infty}_4\]
via the map that sends a functional $f:NC(2) \to \bb C$ to the vector $(f(p_1),f(q_1)$, $f(p_2),f(q_2)).$
\end{proposition}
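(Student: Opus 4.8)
The plan is to realise the stated isomorphism as the operator system dual of the complete quotient map of Proposition~\ref{p_qk}, and then to pin down its range by a dimension count. First I would use Proposition~\ref{p_freep} to identify $NC(2)$ with $\oss(\mathfrak u_2)$, so that Proposition~\ref{p_qk} provides a unital complete quotient map
\[
\gamma : \ell^\infty_4 \to NC(2), \qquad \gamma(a_1,a_2,a_3,a_4) = \tfrac12\bigl[a_1p_1 + a_2q_1 + a_3p_2 + a_4q_2\bigr].
\]
This is the single nontrivial input; the remainder of the argument is formal duality and bookkeeping.

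Next I would appeal to the duality between complete quotient maps and complete order embeddings for finite-dimensional operator systems \cite{farenick--paulsen2011,kavruk--paulsen--todorov--tomforde2010}: since $\gamma$ is a complete quotient map, its adjoint $\gamma^d : NC(2)^d \to (\ell^\infty_4)^d$, given by $\gamma^d(f) = f\circ\gamma$, is a complete order isomorphism onto its range. Composing $\gamma^d$ with the complete order isomorphism $(\ell^\infty_4)^d \cong \ell^\infty_4$, $\phi \mapsto (\phi(e_1),\dots,\phi(e_4))$, recorded immediately before the statement, I compute that $f\in NC(2)^d$ is carried to
\[
\bigl((f\circ\gamma)(e_1),\dots,(f\circ\gamma)(e_4)\bigr) = \tfrac12\bigl(f(p_1),f(q_1),f(p_2),f(q_2)\bigr).
\]
Hence the map $\Lambda$ of the statement, $f\mapsto (f(p_1),f(q_1),f(p_2),f(q_2))$, is exactly twice this composite. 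Because multiplication by the positive scalar $2$ is a complete order automorphism of $\ell^\infty_4$, the map $\Lambda$ is again a complete order isomorphism onto its range.

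It remains to identify that range with $\cl V$. For every functional $f$ one has $f(p_1)+f(q_1) = f(1) = f(p_2)+f(q_2)$, since $p_1+q_1 = 1 = p_2+q_2$; thus $\Lambda(f)\in\cl V$. As $\gamma$ is surjective, $\gamma^d$ and therefore $\Lambda$ are injective, so $\Lambda$ maps the $3$-dimensional space $NC(2)^d$ injectively into the $3$-dimensional subspace $\cl V$ of $\ell^\infty_4$; a dimension count forces $\Lambda(NC(2)^d)=\cl V$. This gives that $\Lambda$ is a complete order isomorphism of $NC(2)^d$ onto $\cl V$, as required.

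The only delicate step is the invocation of the quotient/embedding duality: one must verify that the finite-dimensional duality theorem yields precisely that the dual of a complete quotient map is a complete order isomorphism onto its range, rather than merely a completely positive injection. Since both $\ell^\infty_4$ and $NC(2)$ are finite-dimensional, this is exactly the cited duality, and everything else reduces to tracking the factor $\tfrac12$ and the elementary dimension comparison.
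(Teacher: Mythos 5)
Your proposal is correct and follows essentially the same route as the paper: invoke Kavruk's complete quotient map $\gamma:\ell^\infty_4\to NC(2)$ from Proposition~\ref{p_qk}, dualize it to a complete order embedding $\gamma^d$ via \cite[Proposition 1.8]{farenick--paulsen2011}, and identify the range with $\cl V$. The only difference is that you carefully track the factor $\tfrac12$ (which the paper silently absorbs, since rescaling by a positive scalar is a complete order automorphism preserving $\cl V$) and make the range identification explicit by a dimension count, both of which the paper dismisses as ``easy to verify.''
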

\begin{proof}
Since the map $\gamma: \ell^{\infty}_4 \to NC(2)$ is a complete quotient map, 
the adjoint map $\gamma^d: NC(2)^d \to (\ell^{\infty}_4)^d = \ell^{\infty}_4$ is a complete order inclusion \cite[Proposition 1.8]{farenick--paulsen2011}. 
It is easy to verify that
$\cl V$ is the range of $\gamma^d$ and that $\gamma^d(f) = (f(p_1),f(q_1),f(p_2),f(q_2)).$
\end{proof}

\begin{remark} A class of \emph{operator spaces} denoted $NSG(n,k)$, $n,k\in \mathbb{N}$, was considered
in \cite{junge_etal}. It is not difficult to see that $NSG(n,2)$
coincides with the operator space dual of $NC(n)$.
However, since these objects were studied as operator spaces, their operator system structure
was not discussed in detail. Our emphasis, on the other hand, is on the
operator system tensor product properties of non-commutative $n$-cubes
and for this we need characterizations of the matrix ordered duals of
the \emph{operator system} $NC(n).$ This will be
fully developed in the next section.
\end{remark}

We conclude this section with a description of the matrix ordered dual $NC(n)^d$ for any $n\geq 2$, in the spirit of \cite{farenick--paulsen2011}.

\begin{proposition}\label{p_newd}
The matrix ordered dual $NC(n)^d$ of $NC(n)$ is completely order isomorphic to
the operator subsystem $\cl W$ of $\oplus_{k=1}^n\cl M_2$ given by
$$\cl W = \left\{\bigoplus_{k=1}^n \left(\smallmatrix a_{11}^k & a_{12}^k\\ a_{21}^k & a_{22}^k\endsmallmatrix\right) : a_{ii}^k = a_{jj}^l,
a_{12}^k = a_{21}^k, i,j=1,2, k,l = 1,\dots,n\right\}.$$
\end{proposition}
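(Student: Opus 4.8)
The plan is to represent $NC(n)^d$ through the completely positive maps it parametrizes and to reduce the whole statement to a single decoupled positivity criterion. First I would define the linear map
$$\Psi : NC(n)^d \to \bigoplus_{k=1}^n \M_2, \qquad \Psi(f)=\bigoplus_{k=1}^n\begin{pmatrix} f(1) & f(h_k)\\ f(h_k) & f(1)\end{pmatrix},$$
and record the routine facts: $\Psi$ respects the involution $f\mapsto f^*$, it is a bijection onto $\cl W$ (a functional on $NC(n)$ is determined by the scalars $f(1),f(h_1),\dots,f(h_n)$, matching the free parameters of $\cl W$), and, on choosing the faithful state $f_0$ with $f_0(1)=1,\ f_0(h_k)=0$ as the order unit of $NC(n)^d$, it is unital since $\Psi(f_0)=\bigoplus_k I_2$. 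It then remains to show that $\Psi$ is a complete order isomorphism.

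Unwinding the matrix-ordered dual, an element of $\M_p(NC(n)^d)$ is a linear map $F:NC(n)\to\M_p$ with $F(1)=C_0$, $F(h_k)=C_k$, and it is positive exactly when $F$ is completely positive. Applying the canonical shuffle $\M_p(\M_2)\cong\M_{2p}$ summandwise, positivity of $\Psi^{(p)}(F)$ in $\M_p(\cl W)$ is the requirement that
$$M_k:=\begin{pmatrix} C_0 & C_k\\ C_k & C_0\end{pmatrix}\ \ge\ 0\ \text{ in }\M_{2p},\qquad k=1,\dots,n.$$
So the proposition reduces to proving, for every $p$, that $F$ is completely positive if and only if $M_k\ge 0$ for all $k$. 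The forward direction is immediate: $\left[\begin{smallmatrix} 1 & h_k\\ h_k & 1\end{smallmatrix}\right]\in\M_2(NC(n))_+$ because $h_k$ is a selfadjoint unitary in $\cstar(*_n\mathbb Z_2)$, and $M_k=F^{(2)}\!\bigl(\left[\begin{smallmatrix} 1 & h_k\\ h_k & 1\end{smallmatrix}\right]\bigr)$.

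The content is the converse, which I would prove by dilation. Given $M_k\ge 0$ for all $k$, we have $C_0\ge 0$, and the standard factorization of positive $2\times 2$ operator matrices produces selfadjoint contractions $T_k\in\M_p$ with $C_k=C_0^{1/2}T_kC_0^{1/2}$ (defined on $\operatorname{ran}C_0$ and extended by $0$ on $\ker C_0$ to cover a singular $C_0$). Let $U_k$ be the Halmos dilation
$$U_k=\begin{pmatrix} T_k & (I-T_k^2)^{1/2}\\ (I-T_k^2)^{1/2} & -T_k\end{pmatrix}\in\M_{2p},$$
a selfadjoint unitary on $\mathbb C^{2p}$ whose compression to the first copy of $\mathbb C^p$ is $T_k$. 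Since $*_n\mathbb Z_2$ is a free product with no relations among its generators beyond their being selfadjoint unitaries, the universal property of $\cstar(*_n\mathbb Z_2)$ yields a $*$-representation $\pi$ with $\pi(u_k)=U_k$. Taking $V=\left[\begin{smallmatrix} C_0^{1/2}\\ 0\end{smallmatrix}\right]:\mathbb C^p\to\mathbb C^{2p}$ gives $V^*V=C_0$ and $V^*U_kV=C_k$, so $x\mapsto V^*\pi(x)V$ is completely positive and restricts on $NC(n)$ to $F$; hence $F$ is completely positive.

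The main obstacle is exactly this converse: the hypotheses $M_k\ge 0$ are \emph{decoupled}, one per generator, so the task is to build a \emph{single} dilation space and a \emph{single} pair $(\pi,V)$ realizing all the $C_k$ at once. What makes this possible---and explains why $\cl W$ carries no condition linking different $k$---is the freeness of the generators of $*_n\mathbb Z_2$: the Halmos dilations are carried out independently on the common space $\mathbb C^{2p}$ and then assembled by the universal property of the free product, with no compatibility constraint among the $U_k$. (For the commutative cube $C(n)$ the dilating unitaries would be forced to commute, and such a decoupled description would fail.) A parallel alternative, extending Proposition \ref{NC(2)dual}, would be to show that the analogue for general $n$ of the quotient map $\gamma$ in Proposition \ref{p_qk} is a complete quotient map, dualize via \cite[Proposition 1.8]{farenick--paulsen2011} to embed $NC(n)^d$ as $\{(c_k,d_k)_k: c_k+d_k\text{ is independent of }k\}\subseteq\ell^\infty_{2n}$, and diagonalize each pair by the $2\times2$ Hadamard unitary to reach $\cl W$; the dilation argument above has the advantage of not requiring the general quotient property.
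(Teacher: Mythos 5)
Your proof is correct, but it takes a genuinely different route from the paper's. The paper never writes down the isomorphism on functionals directly: it first shows that the map $\psi:\oss_n\to NC(n)$, $\psi(u_i)=h_i$, is a complete quotient map (using its completely positive right inverse $\gamma(h_i)=(u_i+u_i^*)/2$ from Proposition \ref{NCinclusion} together with the strict-positivity lifting criterion of Proposition \ref{quo map criterion}), so that the dual map $\psi^d:NC(n)^d\to\oss_n^d$ is a complete order embedding; it then invokes the concrete realization of $\oss_n^d$ inside $\oplus_{k=1}^n\M_2$ from \cite[Theorem 4.4]{farenick--paulsen2011} and computes the image of the composed embedding on the dual basis, identifying it with $\cl W$. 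You instead define $\Psi(f)=\oplus_{k=1}^n\left(\begin{smallmatrix} f(1) & f(h_k)\\ f(h_k) & f(1)\end{smallmatrix}\right)$ and prove by hand that positivity in $\M_p(NC(n)^d)$, i.e.\ complete positivity of $F:NC(n)\to\M_p$, is equivalent to positivity of the $n$ decoupled matrices $M_k$; your converse step --- factoring $C_k=C_0^{1/2}T_kC_0^{1/2}$, forming Halmos dilations, and invoking the universal property of $\cstar(*_n\mathbb Z_2)$ --- is the same dilation device the paper uses to prove Proposition \ref{p_freep} and Lemma \ref{nc cubes}, but deployed at the level of matrix-valued functionals rather than to compare operator system structures on $NC(n)$ itself. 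What your route buys is self-containedness (no quotient-map step, no appeal to the external description of $\oss_n^d$) plus an intrinsic dilation-theoretic characterization of the completely positive maps $NC(n)\to\M_p$, which explains structurally why $\cl W$ carries no constraints linking different summands $k$; what the paper's route buys is brevity given its machinery, and the complete order embedding $\psi^d$ as a byproduct. Two small points you should make fully explicit: the symmetrization producing a \emph{selfadjoint} contraction $T_k$ (from $C_k=C_k^*$ one replaces the contraction $W_k$ in $C_k=C_0^{1/2}W_kC_0^{1/2}$ by $(W_k+W_k^*)/2$), and the standing identification $NC(n)\cong\cl S(\mathfrak u_n)$ of Proposition \ref{p_freep}, which is what licenses treating each $h_k$ as a selfadjoint unitary.
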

\begin{proof}
Recall the completely positive maps $\gamma: NC(n)\rightarrow\oss_n$ and $\psi : \cl S_n\to NC(n)$ from Proposition \ref{NCinclusion},
given by $\gamma(h_i)=(u_i+u_i^*)/2$ and $\psi(u_i) = h_i$, $i = 1,\dots,n$.  If
$Y\in\M_p(NC(n))$ is strictly positive, then so is $X=\gamma^{(p)}(Y)\in\M_p(\oss_n)$. Recall that $\psi$ is a left inverse of $\gamma$.
Thus, $Y=\psi^{(p)}(X)$ , which shows that $\psi$ is a complete quotient map (Proposition \ref{quo map criterion}), and so 
$\psi^d$ is a completely order embedding of $NC(n)^d$ into $\cl S_n^d$.

Let $\cl U\subseteq \oplus_{k=1}^n\cl M_2$ be the operator system given by
$$\cl U = \left\{\bigoplus_{k=1}^n \left(\smallmatrix a_{11}^k & a_{12}^k\\ a_{21}^k & a_{22}^k\endsmallmatrix\right) : a_{ii}^k = a_{jj}^l,
i,j=1,2, k,l = 1,\dots,n\right\}$$ and
let $\theta : \cl S_n^d \to \cl U$ be the complete order isomorphism from \cite[Theorem 4.4]{farenick--paulsen2011}.
Let $\{F_j\}_{j=-n}^n$ be the basis of $\cl S_n^d$, dual to the basis $\{u_j\}_{j=-n}^n$.
It follows from the proof of \cite[Theorem 4.4]{farenick--paulsen2011} that
$\theta$ maps $F_0$ to $I$ and $F_j$ to $\oplus_{k=1}^n A_k$, where $A_k = 0$ if $k\neq j$ and
$A_j = \left(\smallmatrix 0 & 1/(n+1)\\ 0 & 0\endsmallmatrix\right)$, $j = 1,\dots,n$.
The composition $\theta\circ \psi^d : NC(n)^d \to \cl U$ is a complete order embedding.
Let $\{H_0,H_1,\dots,H_n\}$ be the basis of $NC(n)^d$, dual to the canonical basis
$\{1,h_1,\dots,h_n\}$ of $NC(n)$.
From the definition of $\psi$, we have that $\psi^d(H_i) = F_i + F_{-i}$, $i = 1,\dots,n$.
It follows that the range of $\theta\circ \psi^d$ is $\cl W$.
\end{proof}

%%%%%%%%%%%%%%%%%%%%%%%%%%%%%%%%%%%%%%%%%%%%%%%%%%%%%%%%%%%%%%%%%%%%%%%%%%%%%%%%%%%%%%%%%%%%%%%%%%%%%%%%%%%%%%%%%%%%%%%%%%%%%%%%%%%

\section{Tensor Products of Non-commutative Cubes}\label{s_tpnc}

Although not stated in the language of operator systems, the
calculations in \cite{tsirelson1980,tsirelson1993} of various
bipartite correlation boxes amount to the
calculation of various tensor products on $NC(n) \otimes NC(m).$
However, our general theory of operator system tensor products and our
duality results give an alternate approach to this
theory. We give our own derivation of these tensor
results in the present section, while in the next section we introduce bipartite correlation
boxes and explain how to translate results between the two theories.
We begin with a general nuclearity result.

\begin{proposition} Let $\cl R$ be an operator system. Then $\cl R \omin NC(1) = \cl R \omax NC(1).$
\end{proposition}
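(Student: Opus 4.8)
The plan is to recognise $NC(1)$ as a (finite-dimensional, commutative) C$^*$-algebra and then let nuclearity do all the work. Taking $n=1$ in Proposition \ref{p_freep}, the operator system $NC(1)$ is completely order isomorphic to $\cl S(\mathfrak u_1) = \mbox{\rm span}\{1,u_1\}\subseteq\cstar(\mathbb Z_2)$, where $u_1 = u_1^*$ is the unitary generator of $\mathbb Z_2$. The point I would stress is that this inclusion is actually an equality: $\cstar(\mathbb Z_2)$ is two-dimensional and is spanned by $1$ and $u_1$, so $\cl S(\mathfrak u_1) = \cstar(\mathbb Z_2)$. Hence $NC(1)$ is completely order isomorphic to $\cstar(\mathbb Z_2)$, which is $*$-isomorphic to $\ell^{\infty}_2$ via the Fourier transform. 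In particular, $NC(1)$ carries the complete order structure of a C$^*$-algebra.

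Once this identification is in hand, I would finish as follows. The C$^*$-algebra $\ell^{\infty}_2$ is finite-dimensional and commutative, hence nuclear, so that $\cl R\omin\ell^{\infty}_2 = \cl R\omax\ell^{\infty}_2$ for every operator system $\cl R$ \cite{kavruk--paulsen--todorov--tomforde2011}. Since nuclearity (equality of the minimal and maximal tensor products with every operator system) is an operator-system property invariant under complete order isomorphism, and since $\min$ and $\max$ are functorial, this equality transports through the complete order isomorphism $NC(1)\cong\ell^{\infty}_2$ to give $\cl R\omin NC(1) = \cl R\omax NC(1)$. Equivalently, one only needs that every element of $\cl M_p(\cl R\omin NC(1))_+$ lies in $\cl M_p(\cl R\omax NC(1))_+$ (the reverse containment being automatic from $\omax\le\omin$), and this is exactly what nuclearity of $NC(1)$ provides.

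I do not anticipate a genuine obstacle here; the entire content of the proof is the observation that the two-dimensional system $NC(1)$ is not merely a subsystem of a C$^*$-algebra but coincides with one, namely the full algebra $\cstar(\mathbb Z_2)\cong\ell^{\infty}_2$. The only step meriting a line of justification is this identification with the \emph{whole} algebra rather than a proper operator subsystem, which is immediate from a dimension count. After that, the result is simply the statement that a finite-dimensional commutative C$^*$-algebra is a nuclear operator system.
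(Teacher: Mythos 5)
Your proof is correct, but it is not the paper's proof: in fact, the paper explicitly mentions your route immediately after the statement (``An indirect way to see this statement is to use the fact that $NC(1)$ is completely order isomorphic to $\ell^{\infty}_2$, which is a nuclear C$^*$-algebra'') and then deliberately sets it aside in favour of a short direct argument. Your version of the indirect route is sound: Proposition \ref{p_freep} with $n=1$ gives $NC(1)\cong \cl S(\mathfrak u_1)$, the dimension count shows $\cl S(\mathfrak u_1)$ is all of $\cstar(\mathbb Z_2)\cong\ell^{\infty}_2$, nuclearity of $\ell^{\infty}_2$ gives $\cl R\omin\ell^{\infty}_2=\cl R\omax\ell^{\infty}_2$ (a fact the paper itself invokes in the same form for $\cstar(\mathbb Z_2\ast\mathbb Z_2)$ in Theorem \ref{th_nc2}), and functoriality of $\min$ and $\max$ transports the equality across the complete order isomorphism. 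The paper's own proof, by contrast, is entirely self-contained and works at the level of cones: it first reduces to the case $n=1$ via the identification $\cl M_n(\cl R\omin\cl S)_+=(\cl M_n(\cl R)\omin\cl S)_+$, observes that $r_1\otimes 1+r_2\otimes h_1$ being min-positive forces $r_1\pm r_2\in\cl R_+$, and then exhibits max-positivity explicitly through the decomposition $2[r_1\otimes 1+r_2\otimes h_1]=(r_1-r_2)\otimes(1-h_1)+(r_1+r_2)\otimes(1+h_1)$, using that $1\pm h_1\in NC(1)_+$. What your approach buys is conceptual clarity --- the proposition becomes an instance of the general principle that nuclear C$^*$-algebras are $(\min,\max)$-nuclear operator systems; what the paper's approach buys is independence from that machinery (no appeal to nuclearity, to the Fourier transform, or to the transport-of-structure argument) and an explicit witness for membership in the max cone, which is in the spirit of the concrete cone computations carried out elsewhere in that section.
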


An indirect way to see this statement is to use the fact that
$NC(1)$ is completely order isomorphic to $\ell^{\infty}_2$, which is a nuclear C$^*$-algebra.
However, our results yield a very short direct proof.

\begin{proof} We have that $\cl M_n(\cl R \omax NC(1))_+ \coisubset \cl M_n(\cl R \omin NC(1))_+,$ so it will be sufficient to prove the
converse inclusion. The identification $\cl M_n(\cl R \omin \cl S)_+ = (\cl M_n(\cl R) \omin \cl S)_+$ shows that it suffices to consider the case $n=1.$

Given $r_1, r_2 \in \cl R,$ we have that $r_1 \otimes 1 + r_2 \otimes h_1 \in [\cl R \omin NC(1)]_+$ if and only if $r_1 \pm r_2 \in \cl R_+.$
Since  $r_1 + r_2$ and $r_1 - r_2$ are in $\cl R_+$ and
$1+h_1$ and $1 - h_1$
are in $NC(1)_+,$ it follows that
\[2[r_1 \otimes 1 + r_2 \otimes h_1] =(r_1 - r_2) \otimes (1- h_1) + (r_1 + r_2) \otimes (1 + h_1) \in (\cl R \omax NC(1))_+ \]
and the proof is complete.
\end{proof}

The following result shows that the operator system $NC(n)$ satisfies the hypotheses of Theorem \ref{l_uv}.

\begin{lemma}\label{nc cubes}
For every operator system $\cl R$ we have that $\cl R\oc NC(n)\coisubset \cl R\omax \cstar(*_n \bb Z_2)$.
Thus, $NC(m)\oc NC(n)\coisubset \cstar(*_m\mathbb
Z_2)\omax\cstar(*_n\mathbb Z_2)$ and therefore $NC(m) \oc NC(n) = NC(m) \oess NC(n).$
\end{lemma}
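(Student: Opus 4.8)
The plan is to prove the first inclusion $\cl R \oc NC(n) \coisubset \cl R \omax \cstar(*_n\mathbb Z_2)$ and then harvest the remaining two assertions as formal consequences. The starting point is the general fact, already available in the excerpt via \cite[Lemma 4.1]{farenick--kavruk--paulsen2011} together with \cite[Theorem 6.7]{kavruk--paulsen--todorov--tomforde2011}, that the commuting tensor product behaves like the maximal tensor product against a unital C$^*$-algebra. Concretely, I would write the chain
\[
\cl R \oc NC(n) \coisubset \cl R \oc \cstar(*_n\mathbb Z_2) = \cl R \omax \cstar(*_n\mathbb Z_2),
\]
where the first step is the functoriality/inclusion statement of \cite[Lemma 4.1]{farenick--kavruk--paulsen2011} applied to the complete order inclusion $NC(n)\hookrightarrow \cstar(*_n\mathbb Z_2)$ (this inclusion is exactly the content of Corollary \ref{cenv nc cubes}, since $\cstare(NC(n)) = \cstar(*_n\mathbb Z_2)$), and the equality is \cite[Theorem 6.7]{kavruk--paulsen--todorov--tomforde2011} because one factor is a unital C$^*$-algebra. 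This is the same argument pattern used in the proof of Lemma \ref{c-max coi}, so it should go through cleanly.

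Having established the $\cl R$-version, I would specialize $\cl R = NC(m)$ and iterate. First apply the result with $\cl R = NC(m)$ to obtain $NC(m) \oc NC(n) \coisubset NC(m) \omax \cstar(*_n\mathbb Z_2)$. Since $\max$ agrees with $\rm c$ when one factor is a C$^*$-algebra, I can rewrite the right-hand side as $NC(m) \oc \cstar(*_n\mathbb Z_2)$ and apply the $\cl R$-result a second time (now with the roles reversed, against the fixed C$^*$-algebra $\cstar(*_n\mathbb Z_2)$) to push $NC(m)$ up to $\cstar(*_m\mathbb Z_2)$. The composition of these complete order inclusions yields
\[
NC(m) \oc NC(n) \coisubset \cstar(*_m\mathbb Z_2) \omax \cstar(*_n\mathbb Z_2),
\]
exactly as in the corresponding two-sided step of Lemma \ref{c-max coi}.

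The final equality $NC(m)\oc NC(n) = NC(m) \oess NC(n)$ is then immediate from the definition of $\rm ess$ together with Corollary \ref{cenv nc cubes}. Indeed, by definition $NC(m)\oess NC(n) \coisubset \cstare(NC(m)) \omax \cstare(NC(n))$, and Corollary \ref{cenv nc cubes} identifies $\cstare(NC(k)) = \cstar(*_k\mathbb Z_2)$, so the ambient C$^*$-algebra for $\rm ess$ is precisely $\cstar(*_m\mathbb Z_2)\omax\cstar(*_n\mathbb Z_2)$. Since we have just shown that $NC(m)\oc NC(n)$ sits completely order isomorphically inside this same C$^*$-algebra with the same generating copy of the algebraic tensor product, the two operator system structures coincide. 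This mirrors exactly the closing sentence of the proof of Lemma \ref{c-max coi}.

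I do not anticipate a genuine obstacle here, since every step is an instance of a cited result or of the reasoning already carried out for the free group case in Lemma \ref{c-max coi}; the only point requiring care is to verify that the inclusion $NC(n)\hookrightarrow\cstar(*_n\mathbb Z_2)$ is a complete order inclusion into the \emph{enveloping} C$^*$-algebra (so that \cite[Lemma 4.1]{farenick--kavruk--paulsen2011} applies verbatim), which is guaranteed by Corollary \ref{cenv nc cubes}. The mild subtlety worth stating explicitly is the reversal of roles in the second application of the $\cl R$-result, using the symmetry of $\max$ in its two arguments.
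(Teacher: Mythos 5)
Your overall architecture is the same as the paper's: prove the $\cl R$-version first, then obtain the second assertion by applying it twice (using $\cl R\oc\cl S=\cl S\oc\cl R$ and the fact that ${\rm c}=\max$ against a C$^*$-algebra), and read off the third assertion from the definition of ${\rm ess}$ together with Corollary \ref{cenv nc cubes}. Those reductions are fine. The gap is in the first and only substantive step: you justify $\cl R\oc NC(n)\coisubset \cl R\oc\cstar(*_n\bb Z_2)$ by citing \cite[Lemma 4.1]{farenick--kavruk--paulsen2011} ``applied to the complete order inclusion $NC(n)\hookrightarrow\cstar(*_n\bb Z_2)$.'' That lemma is not a general statement about inclusions of operator systems into their C$^*$-envelopes; it asserts $\cl R\oc\oss_n\coisubset\cl R\oc\cstar(\fn)$ for the specific systems $\oss_n\subseteq\cstar(\fn)$, and its proof uses that a unital completely positive map sends the \emph{unitary} generators of $\oss_n$ to contractions, which admit unitary dilations. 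There is no general principle of the form ``$\cl S\coisubset\cstare(\cl S)$ implies $\cl R\oc\cl S\coisubset\cl R\oc\cstare(\cl S)$'': if such a principle held, then applying it twice would give $\cl S\oc\cl T\coisubset\cstare(\cl S)\omax\cstare(\cl T)$, i.e., $\cl S\oc\cl T=\cl S\oess\cl T$, for \emph{all} operator systems, which is exactly the nontrivial property that Theorem \ref{l_uv} and Corollary \ref{oess=oc} characterize by a unitarity condition and that this paper can verify only through group-specific arguments. So the citation does not cover your case, and the real content of the lemma is missing from your proof.

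What must be proved, and what the paper actually proves, is the following extension property: if $\phi:\cl R\to\cl B(\cl H)$ and $\psi:NC(n)\to\cl B(\cl H)$ are unital completely positive maps with commuting ranges, then $\psi$ extends to a unital completely positive map $\tilde\psi:\cstar(*_n\bb Z_2)\to\cl B(\cl H)$ whose range \emph{still} commutes with $\phi(\cl R)$. (Arveson's extension theorem produces some ucp extension, but gives no control over commutation, which is precisely the issue.) The paper adapts the dilation argument to the new generators: each $\psi(h_i)$ is a selfadjoint contraction commuting with $\phi(\cl R)$, so its Halmos dilation
\[
w_i\,=\,\left[\begin{matrix} \psi(h_i) & (I-\psi(h_i)^2)^{1/2}\\ (I-\psi(h_i)^2)^{1/2} & -\psi(h_i)\end{matrix}\right]
\]
is a selfadjoint unitary on $\cl H\oplus\cl H$ whose entries lie in the unital C$^*$-algebra generated by $\psi(h_i)$ and hence commute with $\phi(r)\oplus\phi(r)$ for every $r\in\cl R$; the universal property of $\cstar(*_n\bb Z_2)$, as the universal C$^*$-algebra generated by $n$ selfadjoint unitaries, yields a $*$-homomorphism $\pi$ with $\pi(u_i)=w_i$, and compressing $\pi$ to the first summand gives the required $\tilde\psi$. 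With this statement supplied, the chain of inclusions you wrote, and the remaining two assertions, go through exactly as in your proposal.
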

\begin{proof} The proof follows closely the argument given in \cite[Lemma 4.1]{farenick--kavruk--paulsen2011}.
It suffices to show that if $\phi : \cl R\rightarrow\cl B(\H)$ and $\psi : NC(n)\rightarrow \cl B(\H)$ are unital completely positive maps with commuting ranges then
$\psi$ can be extended to a (unital) completely positive map $\tilde{\psi} : \cstar(*_n \bb Z_2)\rightarrow \cl B(\H)$ whose range commutes with the range of $\phi$.
This is done similarly to \cite[Lemma 4.1]{farenick--kavruk--paulsen2011}: we have that
$h_i$ is a selfadjoint unitary, and hence $\psi(h_i)$ is a selfadjoint operator with $-I \leq \psi(h_i)\leq I$. Write
$$w_i = \left[\begin{matrix} \psi(h_i) & (1 - \psi(h_i))^{1/2}\\  (1 - \psi(h_i))^{1/2} & -\psi(h_i)\end{matrix}\right].$$
Then $w_i$ is a selafdjoint unitary, and by the universal property of $\cstar(*_n \bb Z_2)$, there exists a $*$-homomorphism
$\pi : \cstar(*_n \bb Z_2) \rightarrow \cl B(\H\oplus \H)$ such that $\pi(h_i) = w_i$. Letting $\tilde{\phi}(r) = \left[\smallmatrix \phi(r) & 0\\ 0 & \phi(r)\endsmallmatrix\right]$,
we conclude that the ranges of $\pi$ and $\tilde{\psi}$ commute. Now letting $\tilde{\psi}(x) = p\pi(x)p$, where
$p = \left[\smallmatrix I & 0\\ 0 & 0\endsmallmatrix\right]$, we conclude that the ranges of $\tilde{\psi}$ and $\phi$ commute and
since $\tilde{\psi}$ extends $\psi$, we have shown the first claim.

The second claim follows by applying the first claim twice and using the identification $\cl R \oc \cl S = \cl S \oc \cl R.$
\end{proof}

\begin{theorem}\label{th_nc2}
For every operator system $\cl R$ we have that $\cl R \omin NC(2) = \cl R \oc NC(2).$
\end{theorem}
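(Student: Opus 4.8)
The plan is to realise both $\cl R\omin NC(2)$ and $\cl R\oc NC(2)$ as operator subsystems of C$^*$-algebra tensor products built over the C$^*$-envelope of $NC(2)$, and then to exploit the nuclearity of that envelope. Since the cone $\cl M_n(\cl R\oc NC(2))_+$ is always contained in $\cl M_n(\cl R\omin NC(2))_+$, it suffices to establish the reverse inclusion, namely that every positive element of $\cl M_n(\cl R\omin NC(2))$ is positive in $\cl M_n(\cl R\oc NC(2))$ for each $n$.

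By Corollary \ref{cenv nc cubes}, $\cstare(NC(2))=\cstar(*_2\mathbb Z_2)$, and $NC(2)$ sits inside this C$^*$-algebra as a complete order subsystem. The first key observation is that $\mathbb Z_2*\mathbb Z_2$ is isomorphic to the infinite dihedral group $D_\infty=\mathbb Z\rtimes\mathbb Z_2$, which is amenable; hence $\cstar(*_2\mathbb Z_2)$ is a nuclear C$^*$-algebra and is therefore $(\min,\max)$-nuclear as an operator system \cite{kavruk--paulsen--todorov--tomforde2011}. Consequently $\cl R\omin\cstar(*_2\mathbb Z_2)=\cl R\omax\cstar(*_2\mathbb Z_2)$. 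I would then record two complete order inclusions: on the one hand, injectivity of the minimal tensor product applied to $NC(2)\coisubset\cstar(*_2\mathbb Z_2)$ gives $\cl R\omin NC(2)\coisubset\cl R\omin\cstar(*_2\mathbb Z_2)$; on the other hand, Lemma \ref{nc cubes} gives $\cl R\oc NC(2)\coisubset\cl R\omax\cstar(*_2\mathbb Z_2)$.

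With these in hand the argument is a short diagram chase. Let $X\in\cl M_n(\cl R\omin NC(2))_+$. Viewing $X$ inside $\cl M_n(\cl R\omin\cstar(*_2\mathbb Z_2))$, it is positive there by the first inclusion; by nuclearity it is then positive in $\cl M_n(\cl R\omax\cstar(*_2\mathbb Z_2))$. Since $X$ lies in the subspace $\cl M_n(\cl R\otimes NC(2))$, and the inclusion $\cl R\oc NC(2)\coisubset\cl R\omax\cstar(*_2\mathbb Z_2)$ identifies $\cl M_n(\cl R\oc NC(2))_+$ with exactly those elements of that subspace which are positive in the ambient C$^*$-tensor product, we conclude that $X\in\cl M_n(\cl R\oc NC(2))_+$. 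This yields the missing inclusion and hence the asserted equality.

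The only genuinely nontrivial input is the nuclearity of $\cstar(*_2\mathbb Z_2)$; everything else is formal, resting on injectivity of $\min$, Lemma \ref{nc cubes}, and the identity $\min=\max$ for tensor products with a nuclear C$^*$-algebra. The point I expect to warrant the most care is the compatibility of the two ambient realisations: one must check that the single algebraic element $X\in\cl M_n(\cl R\otimes NC(2))$ is being tested against the \emph{same} C$^*$-tensor product $\cl R\otimes\cstar(*_2\mathbb Z_2)$ in both steps, since this is precisely what allows the nuclearity identity to transport positivity from the $\min$ side to the $\oc$ side.
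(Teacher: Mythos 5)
Your proposal is correct and is essentially the paper's own argument: both rest on the amenability of $\mathbb Z_2 * \mathbb Z_2$ (via the infinite dihedral/semidirect product structure, exactly as in Remark \ref{r_ame}), the resulting nuclearity identity $\cl R \omin \cstar(*_2\mathbb Z_2) = \cl R \omax \cstar(*_2\mathbb Z_2)$, the injectivity of $\min$ giving $\cl R \omin NC(2) \coisubset \cl R \omin \cstar(*_2\mathbb Z_2)$, and Lemma \ref{nc cubes} giving $\cl R \oc NC(2) \coisubset \cl R \omax \cstar(*_2\mathbb Z_2)$. Your write-up merely spells out the final cone-chasing step (including the compatibility of the two ambient identifications) that the paper compresses into ``the result follows.''
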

\begin{proof} It is a rather well-known fact that
the group $\bb Z_2 * \bb Z_2$ is amenable (see Remark \ref{r_ame}).
Hence, $\cstar(\bb Z_2 * \bb Z_2)$ is nuclear and it follows that $\cl R \omin \cstar(\bb Z_2 * \bb Z_2) = \cl R \omax \cstar(\bb Z_2 * \bb Z_2).$
Since $\cl R \omin NC(2) \coisubset \cl R \omin \cstar(\bb Z_2* \bb Z_2)$ and,
by Lemma \ref{nc cubes}, $\cl R \oc NC(2) \coisubset \cl R \omax \cstar(\bb Z_2 * \bb Z_2)$, the result follows.
\end{proof}

\begin{remark}\label{r_ame}
One way to see that $\bb Z_2 * \bb Z_2$ is amenable is to note that the subgroup $H= \{ (g_1g_2)^n: n \in \bb Z \}$ is isomorphic to $\bb Z$ and $(g_1g_2)^{-1} = g_2 g_1.$ If we let $\bb Z_2$ act on $H$ by the idempotent automorphism of conjugation by $g_2,$ then $\bb Z_2 * \bb Z_2$ is seen to be the semidirect product of $H$ by $\bb Z_2.$
Now use the fact the semidirect products of amenable groups are amenable.
\end{remark}

The following computational lemmas about the behaviour of $\max$ will be useful below.

\begin{lemma}\label{l_compu} Let $\cl R$ and $\cl S$ be finite dimensional vector spaces,
$\{r_1,...,r_m \}\subseteq \cl R$ and $\{s_1,...,s_n \}\subseteq \cl S$.
Let $(x_{i,j}) \in \cl M_p(\cl R)$ and $(y_{i,j}) \in \cl M_p(\cl S)$ and assume that
$(x_{i,j}) = \sum_{k=1}^m A_k \otimes r_k$ and $(y_{i,j}) = \sum_{l=1}^n B_l \otimes s_l,$ where $A_k,B_l\in \cl M_p$,
$k = 1,\dots,m$, $l = 1,\dots,n$. Then
\[ \sum_{i,j=1}^n x_{i,j} \otimes y_{i,j} = \sum_{k=1}^m \sum_{l=1}^n Tr(A_kB_l^t) r_k \otimes s_l ,\]
where $Tr$ denotes the unnormalised trace.
\end{lemma}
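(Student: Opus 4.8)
The plan is to prove the identity by a direct computation, expanding both tensors into their scalar components and recognizing the resulting coefficients as traces. (I note in passing that the outer summation index on the left-hand side should run to $p$, the matrix size, rather than to $n$; I write it accordingly below.)

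First I would pass to matrix entries. Since $(x_{i,j}) = \sum_{k=1}^m A_k \otimes r_k$, comparing the $(i,j)$ entries gives $x_{i,j} = \sum_{k=1}^m (A_k)_{i,j}\, r_k$, and likewise $y_{i,j} = \sum_{l=1}^n (B_l)_{i,j}\, s_l$, where $(A_k)_{i,j}$ and $(B_l)_{i,j}$ denote the scalar entries of $A_k$ and $B_l$. Next I would substitute these expressions into the left-hand side and use bilinearity of the tensor product to extract the scalars:
\[
\sum_{i,j=1}^p x_{i,j} \otimes y_{i,j}
= \sum_{i,j=1}^p \sum_{k=1}^m \sum_{l=1}^n (A_k)_{i,j}(B_l)_{i,j}\, r_k \otimes s_l.
\]
Since all the sums are finite, I would interchange them to collect the coefficient of each $r_k \otimes s_l$, obtaining
\[
\sum_{k=1}^m \sum_{l=1}^n \left( \sum_{i,j=1}^p (A_k)_{i,j}(B_l)_{i,j} \right) r_k \otimes s_l.
\]

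Finally I would identify the inner coefficient as a trace. Using $(B_l^t)_{j,i} = (B_l)_{i,j}$, we have $(A_k B_l^t)_{i,i} = \sum_{j=1}^p (A_k)_{i,j}(B_l^t)_{j,i} = \sum_{j=1}^p (A_k)_{i,j}(B_l)_{i,j}$, and summing over $i$ gives $Tr(A_k B_l^t) = \sum_{i,j=1}^p (A_k)_{i,j}(B_l)_{i,j}$. Substituting this into the coefficient yields exactly the claimed formula.

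This argument is entirely a matter of bookkeeping, so there is no substantial obstacle. The only point requiring a moment of care is the transpose appearing in $Tr(A_k B_l^t)$: it arises precisely because the factor $y_{i,j}$ is indexed by the same ordered pair $(i,j)$ as $x_{i,j}$, so that the contraction $\sum_{i,j}(A_k)_{i,j}(B_l)_{i,j}$ pairs entries position-by-position rather than in the row-by-column pattern of ordinary matrix multiplication, which is what forces the transpose on $B_l$.
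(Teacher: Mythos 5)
Your proof is correct and follows essentially the same route as the paper: the paper uses bilinearity to reduce to the single-term case $(x_{i,j})=(a_{i,j}r_k)$, $(y_{i,j})=(b_{i,j}s_l)$ and then invokes $\sum_{i,j}a_{i,j}b_{i,j}=Tr(AB^t)$, which is exactly the computation you carry out in fully expanded form. Your observation that the outer sum on the left-hand side should run to $p$ rather than $n$ is also correct; that is a typo in the statement.
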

\begin{proof} By the bilinearlity of the tensor products it is enough to consider the case where
$(x_{i,j}) = (a_{i,j} r_k)$ and $(y_{i,j}) = (b_{i,j} s_l)$.
But, in this case, $\sum_{i,j=1}^p (a_{i,j} r_k) \otimes (b_{i,j} s_l) = Tr(AB^t)r_k \otimes s_l$,
and the proof is complete.
\end{proof}

\begin{lemma}\label{l_spm}
Let $\cl R$ and $\cl S$ be finite dimensional operator systems,
$\{r_1,...,r_m \}\subseteq \cl R$ and $\{s_1,...,s_n \}\subseteq \cl S$ be
linear bases,
and $u = \sum_{k=1}^m \sum_{l=1}^n x_{k,l} r_k \otimes s_l \in \cl R\otimes\cl S$.
If $u$ is strictly positive in $(\cl R \omax \cl S)_+$, then there exist $p\in\mathbb{N}$ and elements
$U_1 = \sum_{k=1}^m A_k \otimes r_k \in M_p(\cl R)_+$ and $U_2 = \sum_{l=1}^n B_l \otimes s_l \in M_p(\cl S)_+$ such that
$Tr(A_kB_l^t) = x_{k,l}$, $1 \le k \le m, 1 \le l \le n.$
%Conversely, if such $U_1$ and $U_2$ exist then $u$ is positive.
\end{lemma}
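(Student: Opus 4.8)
The plan is to assemble this statement directly from Lemma \ref{f-dim max} and Lemma \ref{l_compu}, so that no new positivity analysis is required. First I would invoke Lemma \ref{f-dim max}, with $\cl R$ and $\cl S$ playing the roles of $\oss$ and $\ost$: since $u$ is strictly positive in $\cl R \omax \cl S$, this produces an integer $p \in \mathbb{N}$ together with positive matrices $S = [s_{ij}] \in \M_p(\cl R)_+$ and $T = [t_{ij}] \in \M_p(\cl S)_+$ satisfying
\[ u = \sum_{i,j=1}^p s_{ij} \otimes t_{ij}. \]
These $S$ and $T$ will be the sought-after elements $U_1$ and $U_2$.

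Next I would rewrite $S$ and $T$ in coordinate form relative to the given bases. Expanding each entry $s_{ij} \in \cl R$ in the basis $\{r_1, \dots, r_m\}$ yields scalar matrices $A_k \in \M_p(\mathbb{C})$, $k = 1, \dots, m$, with $S = \sum_{k=1}^m A_k \otimes r_k$; similarly, expanding the entries $t_{ij}$ in the basis $\{s_1, \dots, s_n\}$ gives $B_l \in \M_p(\mathbb{C})$ with $T = \sum_{l=1}^n B_l \otimes s_l$. In particular $U_1 = S$ and $U_2 = T$ already have the form demanded in the statement, and they are positive by Lemma \ref{f-dim max}.

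Finally, I would apply Lemma \ref{l_compu} to the representation $u = \sum_{i,j=1}^p s_{ij} \otimes t_{ij}$, which rewrites it as
\[ u = \sum_{k=1}^m \sum_{l=1}^n Tr(A_k B_l^t)\, r_k \otimes s_l. \]
Comparing this with the hypothesised expansion $u = \sum_{k,l} x_{k,l}\, r_k \otimes s_l$, and using that $\{r_k \otimes s_l\}_{k,l}$ is a basis of $\cl R \otimes \cl S$, then forces $x_{k,l} = Tr(A_k B_l^t)$ for all $k,l$, completing the argument.

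I do not expect a genuine obstacle here, since the proof is essentially a bookkeeping combination of the two preceding lemmas. The only step demanding care is the last one: the coefficient comparison is legitimate precisely because $\{r_k\}$ and $\{s_l\}$ are \emph{bases}, so that $\{r_k \otimes s_l\}$ is a basis of the algebraic tensor product and the coefficients of $u$ are uniquely determined. This is the sole point at which the basis hypothesis (rather than mere spanning) is used.
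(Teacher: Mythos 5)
Your proposal is correct and follows essentially the same route as the paper's own proof: apply Lemma \ref{f-dim max} to obtain $U_1$ and $U_2$, expand them in the given bases, and invoke Lemma \ref{l_compu} to identify the coefficients $x_{k,l}$ with $Tr(A_k B_l^t)$. Your explicit remark that the final coefficient comparison relies on $\{r_k \otimes s_l\}$ being a basis is a point the paper leaves implicit, but it is the same argument.
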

\begin{proof}
Suppose that $u$ is strictly positive. By Lemma \ref{f-dim max}, there exist $p\in \mathbb{N}$, $U_1 = (\alpha_{i,j})\in M_p(\cl R)^+$ and
$U_2 = (\beta_{i,j})\in M_p(\cl S)^+$ such that $u = \sum_{i,j = 1}^p \alpha_{i,j}\otimes\beta_{i,j}$.
Write $U_1 = \sum_{k=1}^m A_k \otimes r_k$ and $U_2 = \sum_{l=1}^n B_l \otimes s_l$ and use Lemma \ref{l_compu}
to obtain the desired form of $u$.
%Note that the functional $\phi : M_p\otimes_{\max = \min} M_p \to \mathbb{C}$, given by
%$\phi(A\otimes B) = Tr(AB^t)$, is positive. (To see the latter statement, suppose that
%$A = \sum_{i,j=1}^p A_{i,j}\otimes E_{i,j}$, where $A_{i,j} = (\alpha_{k,l}^{i,j})_{k,l}\in M_p$.
%Then $\phi(A) = \sum_{i,j} Tr(A_{i,j}E_{j,i}) = \sum_{i,j} \alpha_{i,j}^{i,j} \geq 0$ if $A\geq 0$.)
%Hence, $\phi$ is completely positive, and by the functoriality of the maximal tensor product,
%we have that $\tilde{\phi} = \phi \otimes {\rm id} : M_{p^2}\otimes (\cl R\otimes_{\max}\cl S) \rightarrow \cl R\otimes_{\max}\cl S$ is
%completely positive. After identifying $M_{p^2}\otimes (\cl R\otimes_{\max}\cl S)$ with
%$M_p(\cl R)\otimes_{\max} M_p(\cl S)$, we consider $\tilde{\phi}$ as a completely positive map
%from $M_p(\cl R)\otimes_{\max} M_p(\cl S)$ into $\cl R\otimes_{\max}\cl S$.
%Now let $u = \sum_{k=1}^n \sum_{l=1}^n Tr(A_kB_l^t)  r_k \otimes s_l$, where
%$U_1 = \sum_{k=1}^m A_k \otimes r_k \in M_p(\cl R)^+$ and $U_2 = \sum_{l=1}^n B_l \otimes s_l \in M_p(\cl S)^+$.
%Then $U_1\otimes U_2 \in M_p(\cl R)\otimes_{\max} M_p(\cl S)$ and hence
%$u = \tilde{\phi}(U_1\otimes U_2) \in (\cl R\otimes_{\max}\cl S)^+$.
\end{proof}

\begin{remark} It is not difficult to see that 
every element $u$ of the form prescribed in Lemma \ref{l_spm} is necessarily positive;
however, this fact will not be needed in the sequel.
\end{remark}

We recall the operator system $\cl V$ from Section \ref{tsir}:
$$\cl V = \{(a,b,c,d)\in \ell^{\infty}_4 : a + b = c + d\}\subseteq \ell^{\infty}_4.$$
If $p\in \mathbb{N}$ then
$$\cl M_p(\cl V) = \left\{\sum_{i=1}^4 X_i \otimes e_i : X_i\in \cl M_p(\mathbb{C}), i = 1,2,3,4, X_1 + X_2 = X_3 + X_4\right\}.$$
Moreover,
$$\cl M_p(\cl V)_+ = \left\{\sum_{i=1}^4 X_i \otimes e_i \in \cl M_p(\cl V) : X_i\in \cl M_p(\mathbb{C})_+, i = 1,2,3,4\right\}.$$

%By Lemma \ref{l_spm}, if the element $u = \sum_{i,j=1}^4 q_{i,j} e_i\otimes e_j%$ of $\cl V\otimes_{\max}\cl V$ is strictly positive,
%then $q_{i,j} = Tr(X_iY_j)$, for some $X_i, Y_j\in M_p^+$ with $X_1 + X_2 = X_3% + X_4$, $Y_1 + Y_2 = Y_3 + Y_4$ and some $p\in \mathbb{N}$.

The following result gives a more concrete representation of the
strictly positive elements of $\cl V \omax \cl V.$

\begin{proposition} Let $u = \sum_{i,j=1}^4 q_{i,j} e_i \otimes e_j$
  be a strictly positive element of $\cl V \omax \cl V.$ Then there
  exists $p$ and matrices $X_i,Y_j \in \cl M_p(\mathbb{C})_+$ with $X_1+X_2= X_3+X_4$
  and $Y_1+Y_2 = Y_3+Y_4 =I$ such that $q_{i,j} = Tr(X_iY_j)$, $i,j = 1,2,3,4$.
\end{proposition}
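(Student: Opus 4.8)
The plan is to combine the structure theorem for strictly positive elements of a maximal tensor product, Lemma~\ref{f-dim max}, with the explicit description of the cones $\cl M_p(\cl V)_+$ recorded just above. Since $\cl V$ is a finite-dimensional operator system and $u$ is strictly positive in $\cl V \omax \cl V$, Lemma~\ref{f-dim max} produces $p \in \NN$ together with $S = [s_{ab}] \in \cl M_p(\cl V)_+$ and $T = [t_{ab}] \in \cl M_p(\cl V)_+$ such that $u = \sum_{a,b=1}^p s_{ab} \otimes t_{ab}$. Using the description of $\cl M_p(\cl V)_+$, I would then write $S = \sum_{i=1}^4 X_i' \otimes e_i$ and $T = \sum_{j=1}^4 Y_j' \otimes e_j$ with $X_i', Y_j' \in \cl M_p(\CC)_+$ satisfying $X_1' + X_2' = X_3' + X_4'$ and $Y_1' + Y_2' = Y_3' + Y_4'$.

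Next I would read off the coefficients of $u$ in the basis $\{e_i \otimes e_j\}$ of $\ell^{\infty}_4 \otimes \ell^{\infty}_4$, which is legitimate because $u$ lies in the subspace $\cl V \otimes \cl V$ and the ambient expansion is unique. Since the $(a,b)$-entries are $s_{ab} = \sum_i (X_i')_{ab} e_i$ and $t_{ab} = \sum_j (Y_j')_{ab} e_j$, expanding $u = \sum_{a,b} s_{ab} \otimes t_{ab}$ and matching coefficients yields
\[
q_{ij} \,=\, \sum_{a,b=1}^p (X_i')_{ab}(Y_j')_{ab} \,=\, \mathrm{Tr}\bigl(X_i'(Y_j')^t\bigr).
\]
As the transpose of a positive semidefinite matrix is again positive semidefinite and the relation $Y_1' + Y_2' = Y_3' + Y_4'$ is preserved under transposition, replacing each $Y_j'$ by $Y_j'' := (Y_j')^t$ leaves all hypotheses intact and gives $q_{ij} = \mathrm{Tr}(X_i' Y_j'')$ with the $Y_j''$ positive.

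The remaining step—and the only real obstacle—is to normalize so that $Y_1 + Y_2 = Y_3 + Y_4$ is the identity rather than merely a common positive matrix $W := Y_1'' + Y_2'' = Y_3'' + Y_4''$. When $W$ is invertible I would set $X_i = W^{1/2} X_i' W^{1/2}$ and $Y_j = W^{-1/2} Y_j'' W^{-1/2}$; cyclicity of the trace preserves $q_{ij} = \mathrm{Tr}(X_i Y_j)$, conjugation preserves $X_1 + X_2 = X_3 + X_4$, and $Y_1 + Y_2 = W^{-1/2} W W^{-1/2} = I = Y_3 + Y_4$. To dispose of the invertibility hypothesis I would first compress to the range of $W$: for each $j$ one has $Y_j'' \ge 0$ and $\ker W \subseteq \ker Y_j''$ (if $W\xi = 0$ then $\langle Y_1''\xi,\xi\rangle + \langle Y_2''\xi,\xi\rangle = 0$ forces $Y_1''\xi = Y_2''\xi = 0$, and likewise for $j=3,4$), so each $Y_j''$ is supported on $\mathrm{range}(W)$. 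Writing $P$ for the orthogonal projection onto $\mathrm{range}(W)$, we get $Y_j'' = P Y_j'' P$ and hence $q_{ij} = \mathrm{Tr}(P X_i' P\, Y_j'')$ by cyclicity, with $P X_i' P \ge 0$ and $P X_1' P + P X_2' P = P X_3' P + P X_4' P$. Passing to $\cl M_{p'}$ with $p' = \mathrm{rank}(W)$ via an isometry onto $\mathrm{range}(W)$, the compression of $W$ is invertible, which reduces matters to the invertible case treated above and completes the argument.
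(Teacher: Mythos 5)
Your proof is correct and takes essentially the same approach as the paper's: Lemma~\ref{f-dim max} plus coefficient matching in the basis $\{e_i\otimes e_j\}$ (which is exactly the content of Lemmas~\ref{l_compu} and~\ref{l_spm}) yields $q_{ij}=\mathrm{Tr}\bigl(X_i'(Y_j')^t\bigr)$, followed by the same transpose replacement and the same conjugation-plus-compression normalization. The only difference is cosmetic: the paper first conjugates by a positive invertible matrix $P$ chosen so that $P^{-1}(Y_1+Y_2)P^{-1}$ is the range projection $E$ of $W$ and then compresses by $E$, whereas you first compress to the range of $W$ (after verifying the kernel inclusions) and then conjugate by $W^{\pm 1/2}$ --- the same trick performed in the opposite order.
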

\begin{proof}
 By Lemma \ref{l_spm}, there exist $p\in \mathbb{N}$ and
elements $U_1 = \sum_{i=1}^4X_i \otimes e_i$ and $U_2 = \sum_{j=1}^4Y_j^t \otimes e_j$ in $\cl M_p(\cl V)_+$ such that
$\sum_{i,j=1}^4  Tr(X_iY_j^t)e_i \otimes e_j = \sum_{i,j=1}^4 q_{i,j} e_i \otimes e_j.$
The fact that $U_1,U_2\in \cl M_p(\cl V)_+$ implies moreover that $X_i,
Y_j \in \cl M_p(\mathbb{C})_+$ and $X_1+ X_2= X_3 + X_4,$ $Y_1+ Y_2= Y_3+ Y_4.$ Since
$Y_i \in \cl M_p(\mathbb{C})_+$ if and only if $Y_i^t \in \cl M_p(\mathbb{C})_+,$ we may and do
replace $Y_i$ by $Y_i^t.$

We thus have $q_{i,j} = Tr(X_iY_j)$ with
$X_i,Y_j \in \cl M_p(\mathbb{C})_+$ and $X_1+X_2=X_3+X_4$,
$Y_1+Y_2= Y_3+Y_4.$

Let $P$ be a positive invertible matrix such that
$\sum_{j=1}^4 P^{-1}Y_jP^{-1} =2E$ where $E$ is the
projection onto ${\rm ker}(Y_1+Y_2)^{\perp}$ and set $\hat{Y_i} =
P^{-1}Y_iP^{-1}, \hat{X_i} = PX_iP.$ We have that
$\hat{Y_1} + \hat{Y_2} = \hat{Y_3} + \hat{Y_4} = E$ and
$Tr(X_iY_j) = Tr(\hat{X_i}\hat{Y_j})= Tr((E\hat{X_i}E)(E\hat{Y_j}E))$.
After replacing $\hat{X_i}$ and $\hat{Y_j}$ with
$E\hat{X_i}E$ and $E\hat{Y_j}E$, respectively, the same equations will hold.  Diagonalizing $E,$
we may regard the matrices $E\hat{X_i}E$ and $E\hat{Y_j}E$ as matrices
of a smaller size. So no generality is lost
in assuming that $\sum_{j=1}^4 Y_j = 2I$ or, equivalently, that $Y_1+Y_2 = Y_3+Y_4 = I.$
\end{proof}

\begin{lemma}\label{th_ineq}
Let $p\in \mathbb{N}$ and $X_i, Y_j\in \cl M_p(\mathbb{C})_+$, $i,j = 1,2,3,4$, with $X_1 + X_2 = X_3 + X_4$ and $Y_1 + Y_2 = Y_3 + Y_4 = I$.
Set $q_{i,j} = Tr(X_iY_j)$, $i,j = 1,2$. For $a,c \in \{0,2\}$, set
$S_{a,c}(j,k) = \min_{b = 0,2}\{ \sum_{i=1}^2
\sqrt{q_{b+i,a+j}}\sqrt{q_{b+i,c+k}} \}.$ Then the following inequality holds:
$$Tr(X_1 + X_2) \leq \min_{a,c} \sum_{j,k = 1}^2 S_{a,c}(j,k).$$
\end{lemma}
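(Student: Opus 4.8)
The plan is to fix $a,c\in\{0,2\}$ throughout and to prove $Tr(X_1+X_2)\le \sum_{j,k=1}^2 S_{a,c}(j,k)$; minimizing over the four choices of $(a,c)$ then gives the stated inequality. Write $X=X_1+X_2=X_3+X_4$ and $r_m=Tr(XY_m)$, and introduce the Euclidean vectors $v_m^{(b)}=(\sqrt{q_{b+1,m}},\sqrt{q_{b+2,m}})\in\mathbb R^2$ for $b\in\{0,2\}$ and $1\le m\le 4$, so that $S_{a,c}(j,k)=\min_{b}\langle v_{a+j}^{(b)},v_{c+k}^{(b)}\rangle$. Two identities drive everything. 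First, $\|v_m^{(b)}\|^2=q_{b+1,m}+q_{b+2,m}=Tr(XY_m)=r_m$ is independent of $b$, precisely because $X_1+X_2=X_3+X_4=X$. Second, $r_{a+1}+r_{a+2}=Tr\big(X(Y_{a+1}+Y_{a+2})\big)=Tr(X)$, since $Y_1+Y_2=Y_3+Y_4=I$; the same holds with $a$ replaced by $c$.

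I would first record the single–configuration bound: for each fixed $b$,
\[
\sum_{j,k=1}^2\langle v_{a+j}^{(b)},v_{c+k}^{(b)}\rangle=\big\langle v_{a+1}^{(b)}+v_{a+2}^{(b)},\,v_{c+1}^{(b)}+v_{c+2}^{(b)}\big\rangle\ge Tr(X).
\]
Indeed, the first coordinate of $v_{a+1}^{(b)}+v_{a+2}^{(b)}$ is $\sqrt{q_{b+1,a+1}}+\sqrt{q_{b+1,a+2}}$, whose square is at least $q_{b+1,a+1}+q_{b+1,a+2}=Tr(X_{b+1})$ (using $Y_{a+1}+Y_{a+2}=I$), and likewise for the second coordinate and for $v_{c+1}^{(b)}+v_{c+2}^{(b)}$; since the inner product of two nonnegative vectors is monotone in each coordinate, the displayed quantity is at least $Tr(X_{b+1})+Tr(X_{b+2})=Tr(X)$. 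This already settles the case $a=c$: the diagonal terms $S_{a,c}(j,j)=\|v_{a+j}^{(b)}\|^2=r_{a+j}$ are then independent of $b$ and sum to $Tr(X)$, while the off–diagonal terms $S_{a,c}(j,k)\ge 0$, so the inequality is immediate.

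The substance lies in the case $a\neq c$, where the minimum over $b$ is taken separately in each of the four summands and cannot be pulled outside the sum. Writing $M^{(b)}_{jk}=\langle v_{a+j}^{(b)},v_{c+k}^{(b)}\rangle$ and using $\min(s,t)=\tfrac12(s+t)-\tfrac12|s-t|$ together with the single–configuration bound converts the target inequality into the equivalent statement
\[
\tfrac12\sum_{jk}\big(M^{(0)}_{jk}+M^{(2)}_{jk}\big)-Tr(X)\ \ge\ \tfrac12\sum_{jk}\big|M^{(0)}_{jk}-M^{(2)}_{jk}\big|,
\]
that is, the two configurations' excess over $Tr(X)$ must dominate their total variation. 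This is the main obstacle, and it is exactly where the matrix positivity of the $X_i$ and $Y_j$ must be used: in the scalar case $p=1$ one has $q_{i,m}=X_iY_m$, so $M^{(b)}_{jk}=Tr(X)\sqrt{Y_{a+j}Y_{c+k}}$ is independent of $b$, the right–hand side vanishes, and the inequality is trivial.

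To control the discrepancy for $p>1$ I would pass to the Hilbert–Schmidt picture $\sqrt{q_{i,m}}=\|Y_m^{1/2}X_i^{1/2}\|_{\mathrm{HS}}$ and exploit that $X_{b+1}^{1/2}$ and $X_{b+2}^{1/2}$ furnish two decompositions of the single matrix $X$. The delicate point is that the crude, $b$–independent lower bound $S_{a,c}(j,k)\ge \mathrm{Re}\,Tr\big(XY_{a+j}^{1/2}Y_{c+k}^{1/2}\big)$ coming from the termwise Cauchy–Schwarz estimate $\|w\|\,\|w'\|\ge\mathrm{Re}\langle w,w'\rangle$ is too lossy: its sum equals $\mathrm{Re}\,Tr(XR_aR_c)$ with $R_a=Y_{a+1}^{1/2}+Y_{a+2}^{1/2}$, and $\tfrac12(R_aR_c+R_cR_a)\not\ge I$ in general, so this bound need not reach $Tr(X)$. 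Retaining the per–term geometry of the vectors $v_m^{(b)}$ while choosing the minimizing $b$ with the help of the positivity of the $X_i$ is therefore the technical heart of the argument, and I expect it to be the step requiring genuine work.
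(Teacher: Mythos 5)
Your proposal is incomplete, and the gap is exactly at the heart of the lemma. You prove the inequality only when $a=c$ (where the minimum over $b$ is irrelevant on the diagonal terms) and, for fixed $b$, the ``single-configuration'' bound; but for $a\neq c$ you stop at the reformulation with $\tfrac12\sum_{j,k}|M^{(0)}_{jk}-M^{(2)}_{jk}|$ and explicitly defer the remaining step as ``requiring genuine work.'' That remaining case is the whole point: in the application (Theorem \ref{th_v}, with $a=0$, $c=2$) the minimizing $b$ genuinely differs from term to term, so neither your fixed-$b$ bound nor the $a=c$ case suffices. As written, the proposal does not prove the lemma.

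The missing idea is much simpler than the total-variation inequality you set up, and it dissolves your ``main obstacle'': the per-term quantity to anchor on is $Tr(Y_{a+j}XY_{c+k})$ with the $Y$'s \emph{not} square-rooted, where $X=X_1+X_2=X_3+X_4$. Expanding the identities $Y_{a+1}+Y_{a+2}=Y_{c+1}+Y_{c+2}=I$ gives exactly
\[
Tr(X)\,=\,Tr\bigl((Y_{a+1}+Y_{a+2})\,X\,(Y_{c+1}+Y_{c+2})\bigr)\,=\,\sum_{j,k=1}^2 Tr\bigl(Y_{a+j}XY_{c+k}\bigr),
\]
and each summand is independent of $b$ because $X=X_{b+1}+X_{b+2}$ for \emph{both} values of $b$. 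Now fix $(j,k)$, substitute either decomposition of $X$, and apply Cauchy--Schwarz in the Hilbert--Schmidt inner product to the vectors $X_{b+i}^{1/2}Y_{a+j}$ and $X_{b+i}^{1/2}Y_{c+k}$:
\[
\mathrm{Re}\, Tr\bigl(Y_{a+j}XY_{c+k}\bigr)\,\leq\,\sum_{i=1}^2 \|X_{b+i}^{1/2}Y_{a+j}\|_2\,\|X_{b+i}^{1/2}Y_{c+k}\|_2
\,\leq\,\sum_{i=1}^2 \sqrt{q_{b+i,a+j}}\,\sqrt{q_{b+i,c+k}}\,=\,M^{(b)}_{jk},
\]
where the second inequality uses $\|Z^{1/2}Y\|_2^2=Tr(Z^{1/2}Y^2Z^{1/2})\leq Tr(Z^{1/2}YZ^{1/2})=\|Z^{1/2}Y^{1/2}\|_2^2$ for $0\leq Y\leq I$, together with $\|Z^{1/2}Y^{1/2}\|_2^2=Tr(ZY)$. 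Since the left-hand side does not depend on $b$, it is bounded by $\min_b M^{(b)}_{jk}=S_{a,c}(j,k)$; summing over $j,k$ yields $Tr(X)\leq\sum_{j,k}S_{a,c}(j,k)$, and minimizing over $a,c$ finishes the proof. This is the paper's argument (the paper writes it with $b$ fixed throughout; the per-term choice of $b$ is legitimate precisely because of the $b$-independence just noted). The reason your ``crude $b$-independent bound'' $S_{a,c}(j,k)\geq\mathrm{Re}\,Tr(Y_{a+j}^{1/2}XY_{c+k}^{1/2})$ was too lossy is that square-rooting the $Y$'s inside the trace destroys the exact resolution $\sum_{j,k}Tr(Y_{a+j}XY_{c+k})=Tr(X)$; keeping $Y_{a+j}$ and $Y_{c+k}$ intact there, and putting all the Cauchy--Schwarz loss on the upper-bound side, makes the minimum over $b$ inside the sum come for free.
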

\begin{proof}
Set $q = Tr(X_1 + X_2)$ for brevity.
For all $a,b,c\in \{0,2\}$ we have
\begin{eqnarray*}
q & = & Tr((Y_{a+1} + Y_{a+2})(X_{b+1} + X_{b+2})(Y_{c+1} + Y_{c+2}))\\
& = & \sum_{i,j,k = 1}^2 Tr(Y_{a+j}X_{b+i} Y_{c+k}) =
\sum_{i,j,k = 1}^2 Tr((Y_{a+j}X_{b+i}^{1/2})(X_{b+i}^{1/2} Y_{c+k}))\\
& = &  \sum_{i,j,k = 1}^2 Tr((X_{b+i}^{1/2}Y_{a+j})^* (X_{b+i}^{1/2} Y_{c+k}))
\leq \sum_{j,k = 1}^2 \sum_{i=1}^2\|X_{b+i}^{1/2}Y_{a+j}\|_2 \|X_{b+i}^{1/2} Y_{c+k}\|_2.
\end{eqnarray*}
On the other hand, if $X$ and $Y$ are positive matrices with $Y\leq I$, then
$$\|X^{1/2}Y\|_2^2 = Tr(X^{1/2}Y^2X^{1/2}) \leq Tr(X^{1/2}Y X^{1/2}) = \|X^{1/2}Y^{1/2}\|_2^2.$$
It follows that
$$q\leq \sum_{i,j,k = 1}^2 \|X_{b+i}^{1/2}Y_{a+j}^{1/2}\|_2 \|X_{b+i}^{1/2} Y_{c+k}^{1/2}\|_2.$$
Since $\|X^{1/2} Y^{1/2}\|_2^2 = Tr(Y^{1/2}XY^{1/2}) = Tr(XY)$ whenever $X$ and $Y$ are positive matrices, we conclude that
$$q\leq \sum_{j,k = 1}^2 \sum_{i=1}^2 \sqrt{q_{b+i,a+j}}\sqrt{q_{b+i,c+k}}$$ whenever $b\in \{0,2\}$.
\end{proof}

Lemma \ref{th_ineq} gives us a Bell type inequality. It has a form
similar to the CHSH inequality of \cite{CHSH}, except the appearance
of square roots seems to be new.

\begin{theorem}\label{th_bi} 
Let $u = \sum_{i,j=1}^4 q_{i,j} e_i \otimes e_j \in
  (\cl V \omax \cl V)_+.$
 For $a,c \in \{0,2\}$, set
$S_{a,c}(j,k) = \min \{ \sum_{i=1}^2
\sqrt{q_{b+i,a+j}}\sqrt{q_{b+i,c+k}} \}.$ Then for $d \in \{0,2\}$ the following inequality holds:
$$ \sum_{i,j=1}^2 q_{d+i,d+j} \leq \min_{a,c} \sum_{j,k = 1}^2 S_{a,c}(j,k).$$
\end{theorem}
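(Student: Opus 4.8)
The plan is to deduce the inequality directly from Lemma \ref{th_ineq}, combined with the concrete representation of strictly positive elements of $\cl V \omax \cl V$ obtained in the preceding Proposition, and then to remove the strictness hypothesis by a routine approximation. Note first that the quantity $S_{a,c}(j,k)$ appearing here is, by definition, the same as in Lemma \ref{th_ineq} (the minimum being taken over $b \in \{0,2\}$), so that lemma will apply verbatim once the representation is in hand.

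First I would treat the case in which $u$ is strictly positive. By the preceding Proposition there exist $p \in \mathbb{N}$ and matrices $X_i, Y_j \in \cl M_p(\mathbb{C})_+$, $i,j = 1,2,3,4$, with $X_1 + X_2 = X_3 + X_4$ and $Y_1 + Y_2 = Y_3 + Y_4 = I$, such that $q_{i,j} = Tr(X_i Y_j)$ for all $i,j$. The key observation is that, for each $d \in \{0,2\}$, the left-hand side collapses to a single trace:
\[
\sum_{i,j=1}^2 q_{d+i,d+j} \,=\, \sum_{i,j=1}^2 Tr(X_{d+i} Y_{d+j}) \,=\, Tr\bigl((X_{d+1}+X_{d+2})(Y_{d+1}+Y_{d+2})\bigr) \,=\, Tr(X_{d+1}+X_{d+2}),
\]
the last equality using $Y_{d+1} + Y_{d+2} = I$. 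Since $X_1 + X_2 = X_3 + X_4$, this common value equals $Tr(X_1 + X_2)$ for both $d=0$ and $d=2$. Lemma \ref{th_ineq} then yields precisely $Tr(X_1 + X_2) \le \min_{a,c} \sum_{j,k=1}^2 S_{a,c}(j,k)$, which is the asserted inequality in the strictly positive case.

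To pass to an arbitrary $u \in (\cl V \omax \cl V)_+$, I would approximate by strictly positive elements. The order unit of $\cl V \omax \cl V$ is $\mathbf{1} = \sum_{i,j=1}^4 e_i \otimes e_j$, so for every $\epsilon > 0$ the element $u + \epsilon \mathbf{1}$ is strictly positive, with coefficients $q_{i,j} + \epsilon$. Applying the strictly positive case to $u + \epsilon \mathbf{1}$ and letting $\epsilon \to 0$ recovers the inequality for $u$, since both sides are continuous functions of the finitely many scalars $q_{i,j}$ (the square root and finite minima being continuous).

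I do not anticipate a genuine obstacle: the analytic content is entirely packaged in Lemma \ref{th_ineq}, and the present statement is a repackaging of it through the trace representation. The only point that calls for a small amount of care is the bookkeeping showing that the left-hand side is independent of $d \in \{0,2\}$ --- which rests on combining $Y_{d+1}+Y_{d+2}=I$ with the relation $X_1 + X_2 = X_3 + X_4$ --- together with the standard strict-to-nonstrict approximation via $u + \epsilon \mathbf{1}$.
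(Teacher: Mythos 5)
Your proof is correct and follows essentially the same route as the paper: both rest on the representation $q_{i,j} = Tr(X_iY_j)$ of strictly positive elements of $\cl V \omax \cl V$ from the preceding Proposition, the inequality of Lemma \ref{th_ineq}, the trace identity $\sum_{i,j=1}^2 q_{d+i,d+j} = Tr\bigl((X_{d+1}+X_{d+2})(Y_{d+1}+Y_{d+2})\bigr) = Tr(X_1+X_2)$ for $d \in \{0,2\}$, and a limiting argument. The only difference is the order of operations --- you prove the strictly positive case first and then approximate a general positive $u$ by $u + \epsilon\, 1\otimes 1$, while the paper perturbs by $\delta\, 1 \otimes 1$ at the outset and lets $\delta \to 0$ at the end --- which is purely cosmetic.
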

\begin{proof} For any $\delta >0,$ the element
\[u + \delta 1 \otimes 1 = \sum_{i,j=1}^2 (q_{i,j}+\delta) e_i \otimes
  e_j\]
is strictly positive; by Lemma \ref{th_ineq}, there exist $p\in \mathbb{N}$ and matrices $X_i, Y_j\in \cl M_p(\mathbb{C})_+$
such that $X_1 + X_2 = X_3 + X_4$, $Y_1 + Y_2 = Y_3 + Y_4 = I$ and $q_{i,j} + \delta = Tr(X_iY_j).$ The result
now follows by observing that, for $d \in \{0,2\},$
\[ Tr(X_1+X_2) = Tr((X_{d+1}+X_{d+2})(Y_{d+1} + Y_{d+2})) = \sum_{i,j=1}^2 (q_{d+i,d+j}+ \delta)\]
and letting $\delta \to 0.$
\end{proof}

\begin{theorem}\label{th_v}
$\cl V \omin \cl V \ne \cl V \omax \cl V.$
\end{theorem}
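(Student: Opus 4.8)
The plan is to prove $\cl V\omin\cl V\neq\cl V\omax\cl V$ by producing a single element that is positive in $\cl V\omin\cl V$ but violates the Bell-type inequality of Theorem \ref{th_bi}, and hence cannot be positive in $\cl V\omax\cl V$. Since the positive cone of $\cl V\omax\cl V$ is always contained in that of $\cl V\omin\cl V$ (that is, $\cl V\omax\cl V\subseteq^+\cl V\omin\cl V$), it suffices to exhibit one element of $(\cl V\omin\cl V)_+$ lying outside $(\cl V\omax\cl V)_+$.

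First I would record the concrete meaning of min-positivity. Because $\ell^{\infty}_4\omin\ell^{\infty}_4=\ell^{\infty}_{16}$ is commutative, an element $u=\sum_{i,j=1}^4 q_{ij}\,e_i\otimes e_j$ of $\cl V\otimes\cl V$, viewed inside $\cl V\omin\cl V\coisubset\ell^{\infty}_4\omin\ell^{\infty}_4=\ell^{\infty}_{16}$, lies in $(\cl V\omin\cl V)_+$ precisely when every coefficient satisfies $q_{ij}\geq 0$ and $u\in\cl V\otimes\cl V$. Membership in $\cl V\otimes\cl V$ is in turn the pair of linear conditions $q_{1j}+q_{2j}=q_{3j}+q_{4j}$ for each $j$ and $q_{i1}+q_{i2}=q_{i3}+q_{i4}$ for each $i$ (these are exactly the non-signalling constraints). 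Note that it is enough to work at the scalar level $n=1$, since distinguishing the first positive cones already shows that the two operator system structures differ.

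Next I would write down the explicit candidate, the Popescu--Rohrlich correlation, given by the matrix
\[
(q_{ij})\,=\,\frac{1}{2}
\begin{bmatrix}
1 & 0 & 1 & 0 \\
0 & 1 & 0 & 1 \\
1 & 0 & 0 & 1 \\
0 & 1 & 1 & 0
\end{bmatrix}.
\]
A direct check shows that all four row constraints and all four column constraints above hold and that every entry is nonnegative, so $u=\sum_{i,j} q_{ij}\,e_i\otimes e_j\in(\cl V\omin\cl V)_+$. I would then apply Theorem \ref{th_bi} with $d=0$: the left-hand side equals $q_{11}+q_{12}+q_{21}+q_{22}=1$. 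For the right-hand side it suffices to evaluate the single choice $a=0,\ c=2$, for which $S_{0,2}(j,k)=\min_{b\in\{0,2\}}\sum_{i=1}^2\sqrt{q_{b+i,\,j}}\,\sqrt{q_{b+i,\,2+k}}$. For each $(j,k)\in\{1,2\}^2$ at least one of the two values $b=0,2$ makes the inner sum vanish (the relevant overlap in the matrix is $0$), so $S_{0,2}(j,k)=0$ for all $j,k$ and hence $\min_{a,c}\sum_{j,k}S_{a,c}(j,k)=0$. Thus Theorem \ref{th_bi} would force $1\leq 0$ if $u$ were in $(\cl V\omax\cl V)_+$, a contradiction. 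Therefore $u\notin(\cl V\omax\cl V)_+$, and the two cones differ, giving $\cl V\omin\cl V\neq\cl V\omax\cl V$.

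The only substantive decision is the choice of the violating element, for which the maximally nonlocal PR box is the natural candidate; the remaining work is the finite, routine evaluation of the four overlaps at $a=0,c=2$ together with the verification of the non-signalling constraints. I expect no genuine obstacle beyond this bookkeeping, the conceptual content having been front-loaded into the Bell inequality of Theorem \ref{th_bi}.
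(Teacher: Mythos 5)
Your proposal is correct, and it follows the same route as the paper: realize min-positivity as entrywise positivity plus the non-signalling row/column constraints, then apply the Bell-type inequality of Theorem \ref{th_bi} with $a=0$, $c=2$ to a single witness. The one genuine difference is the witness itself, and here your choice is actually the right one. The paper's printed matrix
\[
Q = \begin{bmatrix} 1&0&1&0\\ 0&1&0&1\\ 1&0&1&0\\ 0&1&0&1 \end{bmatrix}
\]
(rows $3,4$ repeating rows $1,2$) does \emph{not} work: it factors as $v_1\otimes v_1 + v_2\otimes v_2$ with $v_1=(1,0,1,0)$ and $v_2=(0,1,0,1)$ both in $\cl V_+$, so it lies in $(\cl V\omax\cl V)_+$ (it is the perfectly correlated local box), and correspondingly the paper's claimed evaluations $S_{0,2}(1,1)=S_{0,2}(2,2)=0$ at $b=2$ fail for it, since $\sqrt{q_{31}}\sqrt{q_{33}}=1$ there. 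Your matrix, the PR box with rows $3,4$ equal to $(1,0,0,1)$ and $(0,1,1,0)$ (up to the irrelevant normalization $1/2$), is evidently what the paper intended: for it all four quantities $S_{0,2}(j,k)$ vanish exactly as you compute, while $\sum_{i,j=1}^2 q_{i,j}>0$, giving the contradiction. So your proof is correct and, in effect, repairs a misprint in the paper's own argument.
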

\begin{proof}
If we identify $\ell^{\infty}_4 \otimes \ell^{\infty}_4$ with $4 \times 4$ matrices by the map $e_i \otimes e_j \to E_{i,j},$ then
$\cl V \otimes \cl V$ is identified with the $4 \times 4$ matrices such that:
\begin{enumerate}
\item the first two terms in each row has the same sum as the last two terms;
\item the first two terms in each column has the same sum as the last two terms.
\end{enumerate}

Let
\[Q = [q_{ij}]_{i,j=1}^4 = \begin{bmatrix} 1& 0& 1 & 0\\ 0 & 1 & 0 & 1\\1 & 0 & 1 & 0\\0 & 1 & 0 & 1 \end{bmatrix}; \]
clearly, $Q$ is in $(\cl V \omin \cl V)_+ = (\cl V \otimes \cl V) \cap (\ell^{\infty}_4 \omin \ell^{\infty}_4)_+.$
The proof will be complete if we show that $Q\not\in (\cl V \otimes_{\max} \cl V)_+.$

We have that $\sum_{i,j=1}^2 q_{d+i,d+j} = 2$, for $d\in \{0,2\}$. 
Now set $a=0, c=2.$
Taking $b=0,$ we see that
\[0 \le S_{0,2}(2,1) \le \sum_{i=1}^2 \sqrt{q_{i,2}} \sqrt{q_{i,3}} =
0 \text{ and }
0 \le S_{0,2}(1,2) \le \sum_{i=1}^2 \sqrt{q_{i,1}}\sqrt{q_{i,4}} =0.\]
On the other hand, taking $b=2$ yields
\[0 \le S_{0,2}(1,1) \le \sum_{i=1}^2 \sqrt{q_{2+i,1}}\sqrt{q_{2+i,3}}
=0 \text{ and }
0 \le S_{0,2}(2,2) \le \sum_{i=1}^2 \sqrt{q_{2+i,2}}\sqrt{q_{2+i,4}} =0.\]
This violates the inequalities of Theorem \ref{th_bi}.
\end{proof}

\begin{corollary}\label{c_nc}
$NC(2) \oc NC(2) \ne NC(2) \omax NC(2).$
\end{corollary}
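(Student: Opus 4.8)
The plan is to deduce the corollary from the computation $\cl V \omin \cl V \ne \cl V \omax \cl V$ of Theorem \ref{th_v} by dualising, and then to trade $\omin$ for $\oc$ using the nuclearity-type identity of Theorem \ref{th_nc2}. In other words, I would never argue directly with the commuting tensor product; instead I would route everything through the min/max dichotomy on the dual system $\cl V$, which the paper has already resolved.

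First I would recall from Proposition \ref{NC(2)dual} that $NC(2)^d$ is completely order isomorphic to $\cl V$. Since $NC(2)$ is finite dimensional, the duality relations $(\oss \omin \ost)^d = \oss^d \omax \ost^d$ and $(\oss \omax \ost)^d = \oss^d \omin \ost^d$ of \cite[Proposition 1.9]{farenick--paulsen2011} apply, yielding complete order isomorphisms $(NC(2) \omin NC(2))^d \cong \cl V \omax \cl V$ and $(NC(2) \omax NC(2))^d \cong \cl V \omin \cl V$.

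Next I would use that the second dual of a finite-dimensional operator system is canonically completely order isomorphic to the system itself, so that $\oss \mapsto \oss^d$ is a bijection on complete order isomorphism classes. Consequently the inequality $\cl V \omin \cl V \ne \cl V \omax \cl V$ of Theorem \ref{th_v} forces $NC(2) \omin NC(2) \ne NC(2) \omax NC(2)$: if the identity map $NC(2) \omin NC(2) \to NC(2) \omax NC(2)$ were a complete order isomorphism, then its dual would be a complete order isomorphism $\cl V \omin \cl V \to \cl V \omax \cl V$, contradicting Theorem \ref{th_v}. Finally I would invoke Theorem \ref{th_nc2} with $\cl R = NC(2)$ to get $NC(2) \omin NC(2) = NC(2) \oc NC(2)$, and combine this with the displayed inequality to conclude $NC(2) \oc NC(2) \ne NC(2) \omax NC(2)$.

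The only point that requires genuine care — and the closest thing here to an obstacle — is the bookkeeping in the dualisation step: one must check that the complete order isomorphisms furnished by \cite[Proposition 1.9]{farenick--paulsen2011} and Proposition \ref{NC(2)dual} are compatible with the identity maps, so that failure of the identity to be a complete order isomorphism between the $NC(2)$ tensor products corresponds \emph{exactly} to failure between the $\cl V$ tensor products. Once this compatibility is verified, the argument is immediate, and in particular no new positivity computation beyond Theorem \ref{th_v} is needed.
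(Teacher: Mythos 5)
Your proposal is correct and is essentially the paper's own proof: the paper likewise deduces $NC(2)\omin NC(2)\ne NC(2)\omax NC(2)$ by dualising via Proposition \ref{NC(2)dual} and the finite-dimensional duality $(\oss\omin\ost)^d=\oss^d\omax\ost^d$, contradicting Theorem \ref{th_v}, and then replaces $\omin$ by $\oc$ using Theorem \ref{th_nc2}. The extra care you take with the second-dual bookkeeping is a point the paper elides with the phrase ``by taking duals,'' but it is the same argument.
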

\begin{proof}
By Proposition~\ref{NC(2)dual}, $NC(2)^d = \cl V$.
Assume that $NC(2) \omin NC(2) = NC(2) \omax NC(2)$; by taking duals, we have
$NC(2)^d \omax NC(2)^d = NC(2)^d \omin NC(2)^d$, which contradicts Theorem \ref{th_v}. The proof is completed by using the fact that $NC(2) \omin NC(2) = NC(2) \oc NC(2)$
(see Theorem \ref{th_nc2}).
\end{proof}

\begin{theorem}[Tsirelson]\label{t-thm} For every $n,m \ge 2,$ $NC(n) \oc NC(m) \ne NC(n) \omax NC(m).$
\end{theorem}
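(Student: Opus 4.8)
The plan is to reduce the general statement to the case $n=m=2$ already settled in Corollary \ref{c_nc}, in precisely the same spirit as the reduction of Theorem \ref{ocomax} to Theorem \ref{sone}. The mechanism is to produce unital completely positive maps between $NC(2)$ and $NC(k)$ (for each $k\ge 2$) in both directions whose composite is the identity, and then to transport a hypothetical equality $NC(n)\oc NC(m)=NC(n)\omax NC(m)$ back down to the level $(2,2)$ using the functoriality of the tensor products $\oc$ and $\omax$.

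First I would construct the two families of maps. Fix $k\ge 2$. Since the first two generators $h_1,h_2$ of $NC(k)$ are hermitian contractions, the universal property of $NC(2)$ supplies a unital completely positive map $\iota_k:NC(2)\to NC(k)$ with $\iota_k(h_i)=h_i$ for $i=1,2$. Conversely, the elements $h_1,h_2,0,\dots,0$ of $NC(2)$ are hermitian contractions, so the universal property of $NC(k)$ supplies a unital completely positive map $\pi_k:NC(k)\to NC(2)$ sending $h_1,h_2$ to the generators of $NC(2)$ and $h_3,\dots,h_k$ to $0$. By construction $\pi_k\circ\iota_k=\mathrm{id}_{NC(2)}$.

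Next I would argue by contradiction. Suppose that $NC(n)\oc NC(m)=NC(n)\omax NC(m)$ for some $n,m\ge 2$; since $\omax\le{\rm c}$ always holds, this is equivalent to the identity map $\gamma:NC(n)\oc NC(m)\to NC(n)\omax NC(m)$ being completely positive. I would then consider the composite
\[
NC(2)\oc NC(2)\xrightarrow{\;\iota_n\otimes\iota_m\;}NC(n)\oc NC(m)\xrightarrow{\;\gamma\;}NC(n)\omax NC(m)\xrightarrow{\;\pi_n\otimes\pi_m\;}NC(2)\omax NC(2).
\]
By functoriality of ${\rm c}$ the first arrow is completely positive, by hypothesis $\gamma$ is completely positive, and by functoriality of $\max$ the last arrow is completely positive, so the composite is completely positive. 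On the algebraic tensor product it equals $(\pi_n\circ\iota_n)\otimes(\pi_m\circ\iota_m)=\mathrm{id}_{NC(2)\otimes NC(2)}$, since $\gamma$ is the identity on the underlying space. Hence the identity map $NC(2)\oc NC(2)\to NC(2)\omax NC(2)$ is completely positive, forcing $NC(2)\oc NC(2)=NC(2)\omax NC(2)$ and contradicting Corollary \ref{c_nc}.

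I expect no genuine obstacle beyond the $(2,2)$ case, whose difficulty has already been absorbed into Theorem \ref{th_v} (and thence Corollary \ref{c_nc}); no further estimates are needed here. The only point demanding any care is the construction of $\iota_k$ and $\pi_k$, which rests entirely on the universal properties of the operator systems $NC(k)$, together with the already-cited functoriality of ${\rm c}$ and $\max$ (note that functoriality of ${\rm ess}$ is not needed, which is fortunate since it is not known).
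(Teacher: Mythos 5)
Your proposal is correct and is essentially the paper's own proof: the paper also reduces to the $(2,2)$ case (Corollary \ref{c_nc}) via a unital completely positive embedding $NC(2)\to NC(k)$ and a unital completely positive left inverse $NC(k)\to NC(2)$, combined with functoriality of ${\rm c}$ and $\max$ exactly as in the proof of Theorem \ref{ocomax}. Your write-up merely spells out the construction of the two maps from the universal properties, which the paper leaves implicit.
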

\begin{proof} This follows from the $n=2$ case, similarly to the proof of Theorem \ref{ocomax} and using 
the fact that there exists a completely order isomorphic inclusion $\iota: NC(2) \to NC(k)$ and a unital completely positive map 
$\psi:NC(k) \to NC(2)$ whose composition is the identity on $NC(2)$.
\end{proof}

\begin{remark} This last result gives an alternate proof of a weaker result thanTheorem~\ref{ocomax}. Namely, it shows that for every $n,m \ge 2,$ $\oss_n \oc \oss_m \ne \oss_n \omax \oss_m.$
To see this, let $\gamma_n$ and $\psi_n$ be the maps defined in Proposition~\ref{NCinclusion}.  
Assume that $\oss_n \oc \oss_m = \oss_n \omax \oss_m.$ By functoriality, the maps 
$\gamma_n \oc \gamma_m: NC(n) \oc NC(m) \to \oss_n \oc \oss_m$ and 
$\psi_n \omax \psi_m: \oss_n \omax \oss_m \to NC(n) \omax NC(m)$ are unital and completely positive. 
Thus,
\[(\psi_n \omax \psi_m) \circ (\gamma_n \oc \gamma_m): NC(n) \oc NC(m) \to NC(n) \omax NC(m),\]
is a unital completely positive map and hence $NC(n) \oc NC(m) = NC(n) \omax NC(m),$ a contradiction.
\end{remark}

We record the following corollary whose proof follows closely that of Theorem \ref{omaxe}.

\begin{corollary}\label{omaxe2}
We have that $\cstare(NC(n)\omax NC(m)) \neq \cstare(NC(n))\omax\cstare(NC(m))$ for every $n,m\geq 2$.
\end{corollary}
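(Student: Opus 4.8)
The plan is to follow the proof of Theorem \ref{omaxe} almost verbatim, replacing the free-group operator systems $\oss_n$ by the non-commutative cubes $NC(n)$ and replacing the appeal to Theorem \ref{ocomax} by the Tsirelson inequality of Theorem \ref{t-thm}. What makes this transcription possible is Proposition \ref{p_freep}: it identifies $NC(n)$ with the group operator system $\cl S(\mathfrak u_n)$ of the free product $*_n\mathbb Z_2$ on its self-adjoint unitary generators. Consequently the entire apparatus of Section \ref{s_osdg}, and in particular Theorem \ref{l_uv}, applies with $\lgG = *_n\mathbb Z_2$ and $\lgH = *_m\mathbb Z_2$.

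First I would assemble the standing identifications. By Corollary \ref{cenv nc cubes}, $\cstare(NC(n)) = \cstar(*_n\mathbb Z_2)$ and $\cstare(NC(m)) = \cstar(*_m\mathbb Z_2)$, so the equality to be refuted reads
\[
\cstare(NC(n)\omax NC(m)) = \cstar(*_n\mathbb Z_2)\omax \cstar(*_m\mathbb Z_2).
\]
This is exactly statement (2) of Theorem \ref{l_uv} for the tensor product $\tau = \max$. Next I would verify that the hypothesis of Theorem \ref{l_uv} holds for $\tau = \max$: since $\max$ majorizes $\mathrm{ess}$ in the chain $\omax \subseteq^+ \oc \subseteq^+ \oess \subseteq^+ \omin$, we have $NC(n)\omax NC(m) \subseteq^+ NC(n)\oess NC(m)$, which is precisely what is required.

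With Theorem \ref{l_uv} now in force, I would invoke Lemma \ref{nc cubes}, which gives $NC(n)\oc NC(m) = NC(n)\oess NC(m)$, together with the Tsirelson separation of Theorem \ref{t-thm} (whose base case is Corollary \ref{c_nc}), which gives $NC(n)\oc NC(m) \ne NC(n)\omax NC(m)$ for $n,m\ge 2$. Combining these yields $NC(n)\oess NC(m) \ne NC(n)\omax NC(m)$; that is, statement (3) of Theorem \ref{l_uv} fails for $\tau = \max$. By the equivalence $(2)\Leftrightarrow(3)$ of Theorem \ref{l_uv}, statement (2) fails as well, which is exactly the assertion to be proved.

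Since every substantive computation has already been carried out upstream, I do not expect a genuine obstacle here; the proof is a routing of existing results. The one point demanding care is ensuring that the hypotheses of Theorem \ref{l_uv} are honestly met, namely that $NC(n)$ is presented as a group operator system via Proposition \ref{p_freep} and that the choice $\tau = \max$ does satisfy $\max \ge \mathrm{ess}$. All the real mathematical content is deferred to the Tsirelson inequality $NC(n)\oc NC(m) \ne NC(n)\omax NC(m)$ furnished by Theorem \ref{t-thm}.
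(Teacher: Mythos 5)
Your proof is correct and is precisely the argument the paper intends: the paper's proof of Corollary \ref{omaxe2} consists of the remark that it ``follows closely that of Theorem \ref{omaxe},'' and your transcription---substituting Proposition \ref{p_freep} and Corollary \ref{cenv nc cubes} for Proposition \ref{cstare of oss}, Lemma \ref{nc cubes} for Lemma \ref{c-max coi}, and Theorem \ref{t-thm} for Theorem \ref{ocomax}, then invoking Theorem \ref{l_uv} with $\tau = \max$---is exactly that argument carried out in full.
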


\begin{remark} 
%Fritz~\cite{fritz2012}
The fact that $\bb F_2$ embeds in $\bb Z_2 * \bb Z_2 * \bb Z_2$ and
the technique of the second author from \cite[Theorem~5.3]{kavruk2011}
shows that Kirchberg's Conjecture is equivalent to 
the identity $\cl S(\mathfrak{u}_3) \omin \cl S(\mathfrak{u}_3) =
\cl S(\mathfrak{u}_3) \oc \cl S(\mathfrak{u}_3)$ or, equivalently, to the identity 
$NC(3) \omin NC(3) = NC(3) \oc NC(3).$ So determining further relations in this direction will be quite difficult. 
Tsirelson~\cite{tsirelson1980,tsirelson1993} makes some claims that, if true, would imply that $[NC(m) \omin NC(n)]_+ = [NC(n) \oc NC(m)]_+,$ i.e., that these operator systems are equal
at the ground level. 
However, one step in his proof remains unjustified and was later posted as a problem. See Fritz' paper \cite{fritz2012} for a discussion.
\end{remark}

It is natural to wonder about some other operator system tensor products. In particular \cite{kavruk--paulsen--todorov--tomforde2011} 
introduces three other tensor products that lie between $\omin$ and $\oc.$ If one assumes that $\oss \coisubset \B(\H)$ and $\ost \coisubset \B(\K)$, then these are given by the identifications,
\[ \oss \otimes_{\rm el} \ost \coisubset \B(\H) \omax \ost, \quad \oss \otimes_{\rm er} \ost \coisubset \oss \omax \B(\K),
\]
and
 \[
\oss \otimes_{\rm e} \ost \coisubset \B(\H) \omax \B(\K).
\]

\begin{proposition} Let $n,m \in \bb N,$ then $NC(n) \omin NC(m) =
  NC(n) \otimes_{\rm el} NC(m) = NC(n) \otimes_{\rm er} NC(m).$ 
\end{proposition}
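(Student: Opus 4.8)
The plan is to realise both the ${\rm min}$ structure and the ${\rm el}$/${\rm er}$ structures on $NC(n)\otimes NC(m)$ as restrictions of C$^*$-tensor products into a common ambient C$^*$-algebra, and then to force the two to coincide by invoking the local lifting property of the full free-product C$^*$-algebras $\cstar(*_k\mathbb Z_2)$. Since ${\rm el}$, ${\rm er}$ and ${\rm e}$ all lie between $\min$ and ${\rm c}$, we always have $NC(n)\otimes_{\rm el}NC(m)\subseteq^+ NC(n)\omin NC(m)$ and $NC(n)\otimes_{\rm er}NC(m)\subseteq^+ NC(n)\omin NC(m)$, so the substance is to prove the reverse, i.e.\ that $\min$-positivity already forces ${\rm el}$- and ${\rm er}$-positivity. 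In fact the argument below establishes equality in one stroke.

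First I would treat ${\rm el}$. Represent $\cstar(*_n\mathbb Z_2)$ faithfully and unitally on a Hilbert space $\cl H$, so that $NC(n)\subseteq\cstar(*_n\mathbb Z_2)\subseteq\cl B(\cl H)$. By the (representation-independent) definition of ${\rm el}$, $NC(n)\otimes_{\rm el}NC(m)\coisubset\cl B(\cl H)\omax NC(m)$. Because $\cl B(\cl H)$ is a unital C$^*$-algebra, $\cl B(\cl H)\omax NC(m)=\cl B(\cl H)\oc NC(m)$ by \cite[Theorem 6.7]{kavruk--paulsen--todorov--tomforde2011}, and Lemma \ref{nc cubes} applied with $\cl R=\cl B(\cl H)$ gives $\cl B(\cl H)\oc NC(m)\coisubset\cl B(\cl H)\omax\cstar(*_m\mathbb Z_2)$. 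Hence
\[ NC(n)\otimes_{\rm el}NC(m)\coisubset\cl B(\cl H)\omax\cstar(*_m\mathbb Z_2). \]
On the other hand, by the injectivity of $\min$ together with $NC(n)\coisubset\cl B(\cl H)$ and $NC(m)\coisubset\cstar(*_m\mathbb Z_2)$,
\[ NC(n)\omin NC(m)\coisubset\cl B(\cl H)\omin\cstar(*_m\mathbb Z_2). \]
Both of these $\coisubset$ embeddings are implemented by the \emph{same} map $x\otimes y\mapsto x\otimes y$ of $NC(n)\otimes NC(m)$ into $\cl B(\cl H)\otimes\cstar(*_m\mathbb Z_2)$, with the same image. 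The crucial input is that $\cstar(*_m\mathbb Z_2)$ has Kirchberg's local lifting property, whence $\cl B(\cl H)\omin\cstar(*_m\mathbb Z_2)=\cl B(\cl H)\omax\cstar(*_m\mathbb Z_2)$ as operator systems. Therefore the two embeddings induce the same operator system structure on $NC(n)\otimes NC(m)$, giving $NC(n)\omin NC(m)=NC(n)\otimes_{\rm el}NC(m)$.

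The ${\rm er}$ identity follows symmetrically: representing $\cstar(*_m\mathbb Z_2)$ on $\cl K$, one has $NC(n)\otimes_{\rm er}NC(m)\coisubset NC(n)\omax\cl B(\cl K)=\cl B(\cl K)\oc NC(n)\coisubset\cstar(*_n\mathbb Z_2)\omax\cl B(\cl K)$ (again using $\oc=\omax$ for the C$^*$-factor and Lemma \ref{nc cubes} with $\cl R=\cl B(\cl K)$), while $NC(n)\omin NC(m)\coisubset\cstar(*_n\mathbb Z_2)\omin\cl B(\cl K)$. The local lifting property of $\cstar(*_n\mathbb Z_2)$ now identifies the two ambient structures and yields $NC(n)\omin NC(m)=NC(n)\otimes_{\rm er}NC(m)$.

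The main obstacle is the single nontrivial input $\cl B(\cl H)\omin\cstar(*_k\mathbb Z_2)=\cl B(\cl H)\omax\cstar(*_k\mathbb Z_2)$: this is precisely Kirchberg's characterisation of the local lifting property, and it relies on knowing that the full free product $\ast_{i=1}^{k}\cstar(\mathbb Z_2)\cong\cstar(*_k\mathbb Z_2)$ inherits the local lifting property from its finite-dimensional (hence nuclear) free factors $\cstar(\mathbb Z_2)\cong\ell^{\infty}_2$ \cite{kirchberg1993}. Note that nuclearity of $\cstar(*_k\mathbb Z_2)$ cannot be used in its place, since $*_k\mathbb Z_2$ is non-amenable for $k\ge 3$; the local lifting property is exactly the weaker property that still survives the free product. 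Everything else in the argument is routine bookkeeping combining Lemma \ref{nc cubes}, the identity ${\rm c}=\max$ for a C$^*$-algebra factor, and the injectivity of $\min$.
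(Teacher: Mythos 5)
Your proof is correct, but it takes a genuinely different route from the paper's. The paper never leaves the operator-system category: it checks that $NC(n)$ and $NC(m)$ have the operator system local lifting property (OSLLP) of \cite[Definition~8.2]{kavruk--paulsen--todorov--tomforde2010} --- which is immediate here, since a unital completely positive map out of $NC(n)$ is nothing but an $n$-tuple of self-adjoint elements between $-1$ and $1$, and such tuples always lift through C$^*$-algebra quotients --- and then quotes the Kirchberg-type characterisations of the OSLLP from \cite{kavruk--paulsen--todorov--tomforde2010} (Theorems~8.1 and~8.5: the OSLLP of $\oss$ is equivalent to $\oss\omin\cl B(\cl H)=\oss\omax\cl B(\cl H)$, equivalently to $(\min,{\rm er})$-nuclearity), together with the flip symmetry exchanging ${\rm el}$ and ${\rm er}$. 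You instead work at the C$^*$-level: your two chains $NC(n)\otimes_{\rm el}NC(m)\coisubset\cl B(\cl H)\omax NC(m)=\cl B(\cl H)\oc NC(m)\coisubset\cl B(\cl H)\omax\cstar(*_m\mathbb Z_2)$ (definition of ${\rm el}$, the identity ${\rm c}=\max$ against a C$^*$-factor, Lemma~\ref{nc cubes}) and $NC(n)\omin NC(m)\coisubset\cl B(\cl H)\omin\cstar(*_m\mathbb Z_2)$ (injectivity of $\min$) are indeed implemented by the same map with the same image, so everything reduces to $\cl B(\cl H)\omin\cstar(*_m\mathbb Z_2)=\cl B(\cl H)\omax\cstar(*_m\mathbb Z_2)$, i.e.\ to the LLP of $\cstar(*_m\mathbb Z_2)$; this is assembled correctly, and the ${\rm er}$ case is symmetric. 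The trade-off is that your single nontrivial input is markedly deeper than anything the paper's own argument uses: the LLP of $\cstar(*_k\mathbb Z_2)$ cannot simply be read off from \cite{kirchberg1993} as you suggest --- it requires either the stability of the LLP under full unital free products (usually attributed to Pisier, J.~Operator Theory 35 (1996), building on Kirchberg), or Kirchberg's theorem that $\cstar(\fn)$ has the lifting property combined with the index-two free subgroup $\mathbb F_{k-1}\leq *_k\mathbb Z_2$ and a crossed-product/conditional-expectation argument, as in the Tsirelson's-problem literature. By contrast, the OSLLP verification is a triviality precisely because ucp maps on an operator system are determined by their values on a linear basis; that economy is what the operator-system framework buys, and why the paper's proof is two lines. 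Your version has the compensating merit of exhibiting the exact C$^*$-algebraic fact about the enveloping algebras that underlies the equality, and your remark that nuclearity cannot substitute for the LLP when $k\geq 3$ is correct.
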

\begin{proof} It is easily checked that $NC(n)$ and $NC(m)$ have the
  OSLLP property of
  \cite[Definition~8.2]{kavruk--paulsen--todorov--tomforde2010}. Hence,
  by Theorem~8.1 and Theorem~8.5 of
  \cite{kavruk--paulsen--todorov--tomforde2010} together with the fact
  that the el and er tensors are identical modulo the
  flip, we have the claimed equalities.
\end{proof}

\begin{question}\label{nc-q} This leads us to ask:
\begin{enumerate}
\item Are any of the operator systems $NC(m) \omin NC(n)$, $NC(m) \otimes_{\rm e} NC(n)$, and $NC(m) \oc NC(n)$ equal?
\item What about the $C(m) \otimes C(n)$ cases?
%\item What about the $NC(1) = C(1)$ cases??
%\item
%Still want to work out the relations to ``boxes''
\end{enumerate}
\end{question}

%%%%%%%%%%%%%%%%%%%%%%%%%%%%%%%%%%%%%%%%%%%%%%%%%%%%%%%%%%%%%
%%%%%%%%%%%%%%%%%%%%%%%%%%%%%%%%%%%%%%%%%%%%%%%%%%%%%%%%%%%%%

\section{Bipartite correlation boxes}\label{bcb}

In this section, we identify and discuss the relation of our results from Section \ref{s_tpnc} with quantum correlations
studied in \cite{barrett}, \cite{fritz2012}, \cite{tsirelson1980}, \cite{tsirelson1993}, among others.
Suppose that Alice and Bob perform an experiment in which Alice is given an input value $x$ and produces an output value $a$,
while Bob is given an input value $y$ and produces an output value $b$. We assume that the possible values of the $x,y,a,b$
are $0$ and $1$.
Let $p^1_{a|x}$ be the probability that Alice returns the value $a$ provided she is given the input $x$; similarly, let
$p^2_{b|y}$ be the probability that Bob returns the value $b$ provided he is given the input $y$.
These probabilities satisfy the following standard conditions: $p^1_{a|x} \geq 0$, $p^2_{b|y} \geq 0$, for all $a,b,x,y\in \{0,1\}$,
$p^1_{0|x} + p^1_{1|x} = 1$ for $x = 0,1$, and
$p^2_{0|y} + p^2_{1|y} = 1$ for $y = 0,1$.

It is clear that the family $(p^1_{0|0}, p^1_{1|0}, p^1_{0|1}, p^1_{1|1})$ is an element of the positive cone of the
operator system $\cl V$ defined in Section \ref{tsir}; moreover, every element of $\cl V_+$,
after normalisation, can be written in such a form. We similarly have that
$(p^2_{0|0}, p^2_{1|0}, p^2_{0|1}, p^2_{1|1})\in \cl V_+$.

Let $p_{a,b|x,y}$ be the probability that the pair $(a,b)$ is produced as an output by Alice and Bob,
provided that Alice is given an input $x$ and Bob is given an input $y$.
A \emph{bipartite correlation box} (which will be simply referred to by a \emph{box})
is a table of probabilities of the form $(p_{a,b|x,y})_{a,b,x,y}$, viewed as an element of $\ell^{\infty}_{16}$.
The positivity conditions $p_{a,b|x,y} \geq 0$, $a,b,x,y\in \{0,1\}$ are supposed to hold, as
is the normalisation condition $\sum_{a,b = 0}^1 p_{a,b|x,y} = 1$, $x,y\in \{0,1\}$.
In a \lq\lq non-signaling'' experiment, Alice and Bob are \lq\lq not allowed to communicate,'' which, in terms of the
probability table, is expressed by requiring that
$$p_{a,0|x,0} + p_{a,1|x,0} = p_{a,0|x,1} + p_{a,1|x,1} = p^1_{a|x}, \mbox{ for all } a,x\in \{0,1\},$$
$$p_{0,b|0,y} + p_{1,b|0,y} = p_{0,b|1,y} + p_{1,b|1,y} = p^2_{b|y}, \mbox{ for all } b,y \in \{0,1\}.$$
We will assume that all boxes represent probability distributions of non-signaling experiments.

A box $(p_{a,b|x,y})_{a,b,x,y}$ is called \emph{local} if
there exists a probability distribution $(r(\lambda))_{\lambda}$ (that is,
a finite family $(r(\lambda))_{\lambda}$ of non-negative real numbers with
$\sum_{\lambda} r(\lambda) = 1$) and, for each $\lambda$,
elements
$p^k(\lambda) = (p^k_{0|0}(\lambda),p^k_{1|0}(\lambda),p^k_{0|1}(\lambda),p^k_{1|1}(\lambda))\in \cl V_+$, $k = 1,2$, normalised so that
$p^k_{0|0}(\lambda) + p^k_{1|0}(\lambda) = 1$, $k = 1,2$, such that
$$p_{a,b|x,y} = \sum_{\lambda} r(\lambda) p^1_{a|x}(\lambda) p^2_{b|y}(\lambda), \ \ \ a,b,x,y\in \{0,1\}.$$

Tsirelson~\cite{tsirelson1980} introduced \emph{quantum} correlation boxes. These are the
probability distributions $(p_{a,b|x,y})$ given by
$p_{a,b|x,y} = Tr(\rho(A_x^a\otimes A_y^b))$, where $A_x^a$ and $A_x^b$ are positive operators
acting on corresponding Hilbert spaces $\H_x$ and $\H_y$
such that $A_x^0 + A_x^1 = I$ and $A_y^0 + A_y^1 = I$ for all $x,y\in \{0,1\}$, and $\rho$ is a
positive trace-class operator of unit trace. Tsirelson showed that these operators can be taken to act on a Hilbert space of dimension $2$.

Following \cite{barrett}, we let $\cl P$ be the set of all correlation boxes, $\cl L$ be the closure of 
the set of all local correlation boxes, and $\cl Q$ be the closure of 
the set of all quantum correlation boxes. Clearly, $\cl L\subseteq \cl Q \subseteq \cl P$ and each of these sets is convex.

In the sequel, we also identify the linear space $\cl V\otimes\cl V$ with $\cl M_4$ by the mapping sending $e_i\otimes e_j$ to $E_{ij}$.
We denote by $BS_4$ the set of all bistochastic matrices in $\cl M_4$, that is,
$$BS_4 = \left\{(a_{i,j})\in \cl M_4 : a_{k,l}\geq 0, \sum_{j=1}^4 a_{k,j} = \sum_{i=1}^4 a_{i,l} = 1, k,l = 1,2,3,4\right\}.$$
We then view $\cl P$ as a subset of $\cl M_4$ in a natural way
by identifying the family
$(p_{a,b|x,y})_{a,b,x,y}$ with the matrix
$$
\begin{bmatrix} p_{0,0|0,0} & p_{0,1|0,0} & p_{0,0|0,1} & p_{0,1|0,1} \\
p_{1,0|0,0} & p_{1,1|0,0} & p_{1,0|0,1} & p_{1,1|0,1}\\
p_{0,0|1,0} & p_{0,1|1,0} & p_{0,0|1,1} & p_{0,1|1,1}\\
p_{1,0|1,0} & p_{1,1|1,0} & p_{1,0|1,1} & p_{1,1|1,1}
\end{bmatrix}.$$
Under this identification, $\cl P$ is a convex subset of $BS_4.$

To facilitate numbering, we re-label the generators of $NC(2)$ as $E_1=p_1, E_2=q_1, E_3= p_2,$ and $E_4= q_2,$ 
where $p_i = \frac{1 + u_i}{2}$ and $q_i = \frac{1 - u_i}{2}$, $i = 1,2$. 
We note that $E_i$, $i = 1,2,3,4$, are positive operators satisfying the relations $E_1+E_2= E_3+E_4 = I.$ 
%It is not hard to see that
%\[ \cl Q = \{ \left(s(E_i \otimes E_j) \right) \ : \ s: NC(2) \omin NC(2) \to \bb C \text{ is a state} \}. \]
%Thus, $\cl Q$ is the image of the state space of the operator system $NC(2) \omin NC(2).$
Similarly, we label the generators of the commutative operator system
$C(2) = {\rm Span} \{1,x_1,x_2 \}$ by $f_1 = (1 +x_1)/2, f_2= (1-x_1)/2, f_3 = (1+x_2)/2,$ and $f_4= (1-x_2)/2$. 
%then, 
%recalling that every state on $C(2) \omin C(2) \coisubset C([-1,1] \times [-1,1])$ is given by integration against a probability measure, we have that
%\[ \cl L = \{ \left( s(f_i \otimes f_j) \right) | s: C(2) \omin C(2) \to \bb C \text{ is a state } \}.\]
%Thus, $\cl L$ is also the image of a state space.

%Using our duality results also allows us to identify $\cl P$ with the image of a state space. To do this we will need to introduce one other operator system construction.

We recall from \cite{paulsen--todorov--tomforde2011} that every ordered $*$-vector space $W$ with positive cone $W_+$
and Archimedean unit $e$
can be equipped with a \lq\lq maximal'' operator system structure, that is, there exists a family $(C_n)_{n\in \mathbb{N}}$
of matrix cones such that $C_1 = W_+$ and $OMAX(W) = (W,(C_n)_{n\in \mathbb{N}}, e)$ is an operator system having the property that
every positive map $\phi : W\rightarrow \cl B(H)$ is completely positive when as a map from $OMAX(W)$ into $\cl B(H$). Similarly, there is a minimal operator system structure $OMIN(W)$ and, 
if $W$ is a finite-dimensional operator system, then $OMAX(W^d) = OMIN(W)^d$.

The following theorem summarizes some of our results and can be thought of as a dictionary between  the 
language of correlation boxes and that of tensor products of group operator systems.

\begin{theorem} We have the following identities:
%\begin{itemize}
%\item
\begin{multline*}
{\rm (i) \ \ } \cl P = (\cl V \omin \cl V)_+ \cap BS_4 = \\ \{ \left( s(E_i \otimes E_j) \right) \ : \  s \mbox{ is a state on } NC(2) \omax NC(2) \}; \end{multline*}
%\item
\begin{multline*} 
{\rm (ii) \ } \cl Q = (\cl V \omax \cl V)_+ \cap BS_4 =\\ \{ \left( s(E_i \otimes E_j) \right) \ : \  s \mbox{ is a state on }  NC(2) \omin NC(2)  \}; \end{multline*}
%\item
\begin{multline*} 
{\rm (iii) \ } \cl L = ( OMAX(\cl V) \omax OMAX(\cl V) )_+ \cap BS_4 =\\ \{ \left( s(f_i \otimes f_j) \right) \ : \  s \mbox{ is a state on } C(2) \omin C(2) \}. \end{multline*}
%\end{itemize}
\end{theorem}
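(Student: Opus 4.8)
The plan is to handle the three numbered statements in parallel, since each pairs a \emph{cone} description with a \emph{state} description of a set of boxes, and the two are linked by one common duality mechanism. First I would record that bridge. For finite-dimensional operator systems one has $(\oss\omin\ost)^d=\oss^d\omax\ost^d$ and $(\oss\omax\ost)^d=\oss^d\omin\ost^d$ (used in the proof of Theorem \ref{mixed}). Combining this with $NC(2)^d=\cl V$ (Proposition \ref{NC(2)dual}) and with $C(2)^d=OMAX(\cl V)$ — which holds because $C(2)$, as a unital subsystem of the abelian C$^*$-algebra $C([-1,1]^2)$, carries the minimal structure, because $C(2)$ and $NC(2)$ share the ground-level order cone $\cl V_+$, and because of the finite-dimensional identity $OMAX(W^d)=OMIN(W)^d$ — a state $s$ on $NC(2)\omax NC(2)$ (resp.\ $NC(2)\omin NC(2)$, resp.\ $C(2)\omin C(2)$) is exactly a normalized element of $(\cl V\omin\cl V)_+$ (resp.\ $(\cl V\omax\cl V)_+$, resp.\ $(OMAX(\cl V)\omax OMAX(\cl V))_+$). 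Writing $s$ in the basis dual to $\{E_i\}$ (resp.\ $\{f_i\}$) identifies it with the matrix $\big(s(E_i\otimes E_j)\big)$ (resp.\ $\big(s(f_i\otimes f_j)\big)$), and the relations $E_1+E_2=E_3+E_4=1$ together with $s(1\otimes1)=1$ place that matrix in $BS_4$. This gives, in each case, the equality between the middle (cone) and the right-hand (state) expressions once the duality is in hand.

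For statement (i) the remaining left-hand equality is a direct unwinding of definitions: a non-signaling box lies in $\cl V\otimes\cl V$ precisely because the non-signaling conditions are the $\cl V$-relations on the rows and columns of its matrix; its entrywise non-negativity is positivity in $\ell^{\infty}_4\omin\ell^{\infty}_4$, and since $\cl V\omin\cl V\coisubset\ell^{\infty}_4\omin\ell^{\infty}_4$ this is membership in $(\cl V\omin\cl V)_+$; the normalization is exactly $BS_4$. Hence $\cl P=(\cl V\omin\cl V)_+\cap BS_4$.

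The heart of the theorem is the left-hand equality of (ii), which is the operator-system form of Tsirelson's result. For $\cl Q\subseteq(\cl V\omax\cl V)_+\cap BS_4$ I would start from a quantum box $p_{a,b|x,y}=\mathrm{Tr}\big(\rho(A_x^a\otimes B_y^b)\big)$, use the universal property of $NC(2)$ to produce unital completely positive maps $\phi:NC(2)\to\cl B(\H)$ and $\psi:NC(2)\to\cl B(\K)$ carrying the generators $E_i$ to the corresponding POVM elements, tensor them by functoriality of $\min$ into $\cl B(\H\otimes\K)$, and compose with $T\mapsto\mathrm{Tr}(\rho T)$ to obtain a state on $NC(2)\omin NC(2)$ whose matrix is the given box; the state identification and passage to closures then close this inclusion. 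The reverse inclusion is the genuinely hard step: a state on $NC(2)\omin NC(2)$ is \emph{a priori} only a commuting-model correlation, and one must realize it (in the limit) by an honest spatial tensor $\H\otimes\K$. The key is Theorem \ref{th_nc2}, namely $NC(2)\omin NC(2)=NC(2)\oc NC(2)$, which holds because $\bb Z_2*\bb Z_2$ is amenable (Remark \ref{r_ame}) and so $\cstar(\bb Z_2*\bb Z_2)$ is nuclear; nuclearity lets the commuting representation furnished by the $\oc$-description be split into a tensor representation, exhibiting the state as a limit of quantum boxes. I expect this identification of the tensor and commuting quantum models to be the main obstacle, since it is exactly the point at which a Tsirelson/Connes-type gap would appear were it not collapsed here by amenability.

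Finally, for the left-hand equality of (iii) I would note that a local box is by definition a convex combination of product boxes $p^1(\lambda)\otimes p^2(\lambda)$ whose factors lie in $\cl V_+$, and that the set of such combinations together with its closure is precisely the ground-level positive cone of $OMAX(\cl V)\omax OMAX(\cl V)$: for $OMAX$ systems the defining cone of the maximal tensor product reduces at the first matrix level to the closed cone generated by $\cl V_+\otimes\cl V_+$. Intersecting with the normalization cone $BS_4$ yields $\cl L$, and the closure in the definition of $\cl L$ matches the Archimedean closure built into the max cone. As a consistency check throughout, the chain $\cl L\subseteq\cl Q\subseteq\cl P$ is compatible with the cone inclusions $(OMAX(\cl V)\omax OMAX(\cl V))_+\subseteq(\cl V\omax\cl V)_+\subseteq(\cl V\omin\cl V)_+$, which follow from functoriality of $\max$ and the ordering $\max\le\min$.
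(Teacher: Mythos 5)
Your duality bridge, part (i), and the inclusion of $\cl Q$ into the state set in (ii) all match the paper's argument. Part (iii) is correct but runs in the direction dual to the paper's: you compute on the cone side, identifying the ground-level cone of $OMAX(\cl V)\omax OMAX(\cl V)$ with the (closed) cone generated by $\cl V_+\otimes\cl V_+$, whereas the paper works on the state side, extending a state on $C(2)\omin C(2)$ to $C([-1,1])\otimes_{\min}C([-1,1])$, realizing it as integration against a probability measure on $[-1,1]^2$, and using that such measures lie in the weak$^*$ closed convex hull of point evaluations. Both routes are sound; yours trades the measure-theoretic step for a (correct) computation of the first-level max cone of $OMAX$ systems.

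The genuine gap is in the reverse inclusion of (ii), which you rightly call the heart of the theorem but resolve by the wrong mechanism. You route the argument through Theorem \ref{th_nc2} ($NC(2)\omin NC(2)=NC(2)\oc NC(2)$, via amenability of $\bb Z_2 * \bb Z_2$) and assert that ``nuclearity lets the commuting representation be split into a tensor representation, exhibiting the state as a limit of quantum boxes.'' First, the detour is unnecessary: since the min tensor product is spatial and injective, $NC(2)\omin NC(2)\coisubset \cstar(\bb Z_2 * \bb Z_2)\omin \cstar(\bb Z_2 * \bb Z_2)$, so any state on the operator system extends directly to a state on this concretely represented C$^*$-algebra with no appeal to amenability; indeed the paper's proof of (ii) never invokes Theorem \ref{th_nc2} or Remark \ref{r_ame}, and the same argument would apply verbatim to $NC(n)$, $n\geq 3$, where $\min$ versus $\oc$ is precisely the open Kirchberg problem. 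Second, and more seriously, even after this step you only have an \emph{abstract} state $\omega$ on a separable C$^*$-algebra acting on $\cl H\otimes\cl H$; a quantum box requires $p_{i,j}=Tr(\rho(A_i\otimes B_j))$ with $\rho$ a positive trace-class operator of unit trace, and nuclearity does not produce such a $\rho$. The missing step --- which is the actual analytic content of the paper's proof --- is that every state on a separable C$^*$-subalgebra of $\cl B(\cl H\otimes\cl H)$ is a pointwise (weak$^*$) limit of restrictions of \emph{normal} states of $\cl B(\cl H\otimes\cl H)$, each of which is implemented by a density operator; combining this with the closedness of $\cl Q$ closes the inclusion. Your proposal never identifies this normal-state approximation, so the assertion that the state is ``a limit of quantum boxes'' is unjustified as written.
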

\begin{proof} 
(i) Since $\cl V \omin \cl V \coisubset \ell^{\infty}_4 \omin \ell^{\infty}_4 \equiv \cl M_4$, 
we have that $(\cl V \omin \cl V)_+$ is the set of matrices with non-negative entries satisfying $q_{i,1} + q_{i,2} = q_{i,3} + q_{i,4}$ and $q_{1,j} +q_{2,j} = q_{3,j} +q_{4,j}$,
 $i,j = 1,2,3,4$. From this it follows that $\cl P = (\cl V \omin \cl V)_+ \cap BS_4.$ But by Proposition~\ref{NC(2)dual} 
 and the fact that the maximal and the minimal tensor products are dual to each other, 
 we have that $(q_{i,j}) \in (\cl V \omin \cl V)_+$ if and only if $q_{i,j} = f(E_i \otimes E_j)$ for some positive functional $f: NC(2) \omax NC(2) \to \bb C$ and the first set of equalities follows.

(ii) Let $\cl Q_1=\{ \left( s(E_i \otimes E_j) \right) : s \mbox{ is a state on }  NC(2) \omin NC(2)  \}$, and  
let $A_i$ and $B_i$, $i = 1,2,3,4$, be positive operators acting on Hilbert spaces $\cl H$ and $\cl K$, 
respectively, 
such that $A_1 + A_2 = A_3 + A_4 = I$ and $B_1 + B_2 = B_3 + B_4 = I$, and $\rho$ be a
positive operator of trace class on $\cl H\otimes\cl K$. 
The maps $\phi : NC(2)\to \cl B(\cl H)$ and $\psi : NC(2)\to \cl B(\cl K)$ given by $\phi(E_i) = A_i$
and $\psi(E_i) = B_i$, $i = 1,2,3,4$, are unital and completely positive; hence, the tensor product map 
$\phi \otimes\psi : NC(2)\otimes_{\min} NC(2) \to \cl B(\cl H\otimes\cl K)$ is a unital completely positive map. 
Thus, the linear functional $\omega : NC(2)\otimes_{\min} NC(2)$ given by 
$\omega(u) = Tr(\rho(\phi\otimes\psi)(u))$, $u\in NC(2)\otimes_{\min} NC(2)$, is a state. 
It follows that all quantum correlation boxes are contained in $\cl Q_1$; 
since $\cl Q_1$ is a closed set, we conclude that $\cl Q\subseteq\cl Q_1$. 

To show that $\cl Q_1\subseteq \cl Q$, suppose that $\omega$ is a state of $NC(2)\otimes_{\min} NC(2)$.
Represent the C$^*$-algebra $\cstar(\mathbb{Z}_2\ast\mathbb{Z}_2)$ faithfully on a Hilbert space $\cl H$.  
Then $\omega$ has an extension to a state (denoted in the same way) 
of $\cstar(\mathbb{Z}_2\ast\mathbb{Z}_2)\otimes_{\min} \cstar(\mathbb{Z}_2\ast\mathbb{Z}_2)\subseteq \cl B(\cl H\otimes\cl H)$.
Since the latter C$^*$-algebra is separable, 
$\omega$ can be  approximated pointwise by restrictions of normal states on $\cl B(\cl H\otimes\cl H)$. 
We may therefore assume that $\omega$ is the restriction to 
$\cstar(\mathbb{Z}_2\ast\mathbb{Z}_2)\otimes_{\min} \cstar(\mathbb{Z}_2\ast\mathbb{Z}_2)$ of a 
normal state on $\cl B(\cl H\otimes\cl H)$. Hence, there exists a positive trace class operator $\rho$
on $\cl H\otimes\cl H$ such that $\omega(u) = Tr(u\rho)$, $u\in NC(2)\otimes_{\min} NC(2)$. 
Letting $\iota : NC(2)\to \cstar(\mathbb{Z}_2\ast\mathbb{Z}_2)$
be the canonical inclusion, we see that
$\omega(E_i\otimes E_j) = Tr((\iota(E_i)\otimes\iota(E_j))\rho)$, $i,j = 1,2,3,4$. 
Thus, $(\omega(E_i\otimes E_j))\in \cl Q$, and hence $\cl Q = \cl Q_1$. 

To show that $\cl Q$ coincides with $(\cl V \omax \cl V)_+ \cap BS_4$, we need only 
take into account that $(NC(2) \omin NC(2))^d = \cl V \omax \cl V$.

(iii) Let $\cl L_1=\{ \left( s(f_i \otimes f_j) \right): s \mbox{ is a state on } C(2) \omin C(2) \}$, and
suppose that we are given a local box whose entries have the form 
$$p_{a,b|x,y} = \sum_{\lambda} r(\lambda) p^1_{a|x}(\lambda) p^2_{b|y}(\lambda), \ \ \ a,b,x,y\in \{0,1\}.$$
Letting $s_{\lambda}^1$ and $s_{\lambda}^2$ be the states on $C(2)$ given by
$$(s_{\lambda}^k(f_1),s_{\lambda}^k(f_2),s_{\lambda}^k(f_3),s_{\lambda}^k(f_4)) = 
(p^k_{0|0}(\lambda),p^k_{1|0}(\lambda),p^k_{0|1}(\lambda),p^k_{1|1}(\lambda)), \ \ k = 1,2,$$
and $s = \sum_{\lambda} r(\lambda) s_{\lambda}^1 \otimes s_{\lambda}^2$, we have that 
$(p_{a,b|x,y})$ coincides with $(s(f_i\otimes f_j))$. It follows that $\cl L\subseteq \cl L_1$. 
Conversely, if 
$s$ is any state on $C(2) \omin C(2)$, then $s$ can be extended to a state on 
$C([-1,1]) \otimes_{\min} C([-1,1])$ and is hence given by 
integration against a probability measure on $[-1,1]\times [-1,1]$. 
But every such measure is in the weak* closed convex hull of the point evaluations, and it follows that 
$\cl L_1\subseteq \cl L$; thus, $\cl L = \cl L_1$. 

Now $(C(2) \omin C(2))^d = C(2)^d \omax C(2)^d.$ 
Since $C(2)= OMIN(C(2))$, we have that $C(2)^d = OMAX(C(2)^d).$ Finally, 
the equality $C(2)^d = \cl V$ of ordered spaces implies the equality 
$C(2)^d = OMAX(\cl V)$ as operator systems; hence, the proof of (iii) is complete.
\end{proof}

Now that we see the identifications of images of states with sets of boxes we may apply Tsirelson's result \cite{tsirelson1993} that
$\cl Q \ne \cl P$ to deduce directly that $[NC(2) \omin NC(2)]_+ \ne [NC(2) \omax NC(2)]_+.$

Now that we see the connection between sets of boxes and tensor products more clearly it is interesting to ask the following questions:

\begin{question} For each tensor product $\tau$ between $\min$ and $\max$, we obtain a set of boxes between $\cl Q$ and $\cl P$ by considering the convex set
\[ (\cl V \otimes_{\tau} \cl V)_+ \cap BS_4.\]
What are the relationships among these for the tensor products that have already
been introduced? Do any of these have physical meaning?
\end{question}

Dually, we could generate boxes by considering tensor products on $NC(2) \otimes NC(2).$ 
But since $NC(2) \omin NC(2) = NC(2) \oc NC(2)$, we would only get new families of boxes by looking at tensors between $\oc$ and $\omax$. 
The tensors that have been introduced lie between $\omin$ and $\oc$, and so they generate no new families of boxes when applied to $NC(2).$

\begin{question} In a similar fashion one could look at tensors on either $C(2) \otimes C(2)$ or its dual to generate new families of boxes between $\cl L$ and $\cl P.$ What are the relationships here and do any have physical meaning?
\end{question}

%%%%%%%%%%%%%%%%%%%%%%%%%%% bibliography %%%%%%%%%%%%%%%%%%%%%%%%%

\end{document}